\let\nc\newcommand
\let\renc\renewcommand
\theoremstyle{plain}
\newtheorem*{thm}{Theorem}
\newtheorem*{prop}{Proposition}
\newtheorem*{cor}{Corollary}
\newtheorem*{lem}{Lemma}
\theoremstyle{definition}
\newtheorem*{defn}{Definition}
\newtheorem*{example}{Example}
\newtheorem*{remark}{Remark}
\newtheorem*{rem}{Remark}
\nc{\bdm}{\begin{displaymath}}
\nc{\edm}{\end{displaymath}}
\nc{\bthm}{\begin{thm}}
\nc{\ethm}{\end{thm}}
\nc{\blem}{\begin{lem}}
\nc{\elem}{\end{lem}}
\nc{\bcor}{\begin{cor}}
\nc{\ecor}{\end{cor}}
\nc{\bprop}{\begin{prop}}
\nc{\eprop}{\end{prop}}
\nc{\bdef}{\begin{defn}}
\nc{\eddef}{\end{defn}}
\renewcommand{\subsection}{\@startsection{subsection}{2}{0pt}{-3ex
plus -1ex minus -0.2ex}{-2mm plus -0pt minus
-2pt}{\normalfont\bfseries}} \makeatother
\newcommand{\Lmod}[1]{#1\text{-}{\mathsf{mod}}}
\newcommand{\grLmod}[1]{#1\text{-}{\mathsf{grmod}}}
\newcommand{\blambda}{\boldsymbol{\lambda}}
\newcommand{\bmu}{\boldsymbol{\mu}}
\DeclareMathOperator{\im}{\mathrm{Im}}
\DeclareMathOperator{\supp}{\mathrm{Supp}}
\DeclareMathOperator{\End}{\mathrm{End}}
\DeclareMathOperator{\gr}{\mathrm{gr}}
\newcommand{\beq}{\begin{equation}\label}
\newcommand{\eeq}{\end{equation}}
\DeclareMathOperator{\Spec}{\mathrm{Spec}}
\DeclareMathOperator{\Hom}{\mathrm{Hom}}
\nc{\Z}{\mathbb{Z}}
\newcommand{\N}{\mathbb{N}}
\newcommand{\C}{\mathbb{C}}
\newcommand{\Fun}{\mbox{\mathrm{Fun}}\,}
\nc{\rank}{\textrm{rank} \,}
\nc{\ds}{\dots}
\let\mc\mathcal
\let\mf\mathfrak
\nc{\HW}{\bar{H}_{\mathbf{c}}(W)}
\nc{\HK}{\bar{H}_{\mathbf{c}}(K)}
\nc{\HtK}{\widetilde{H}_{\mathbf{c}}(K)}
\nc{\CMW}{\textsf{CM}_{\mbf{c}}(W)}
\nc{\CMK}{\textsf{CM}_{\mbf{c}}(K)}
\nc{\mbf}{\mathbf}
\nc{\LK}{\textsf{Irr}(K)}
\nc{\LW}{\textsf{Irr}(W)}
\nc{\Res}{\mathsf{Res} \, }
\nc{\Ind}{\mathsf{Ind} \, }
\nc{\cont}{\textrm{cont}}
\nc{\eWb}{\mathbf{e}_{W_b}}
\nc{\eW}{\mathbf{e}_{W}}
\nc{\msf}{\mathsf}
\nc{\Ui}{\mc{U}_{i,+}}
\nc{\Uone}{\mc{U}_{1,+}}
\nc{\Utwo}{\mc{U}_{2,+}}
\nc{\minusone}{-1}
\nc{\minustwo}{-2}
\nc{\Mod}{\mathrm{Mod} \,}
\nc{\ms}{\mathscr}
\nc{\Frac}{\mathrm{Frac} \,}
\nc{\ra}{\rightarrow}
\nc{\hra}{\hookrightarrow}
\nc{\lab}{\label}
\renc{\O}{\mc{O}}
\nc{\Tan}{\mc{T}}
\nc{\ul}{\underline}
\nc{\s}{\mathfrak{S}}
\nc{\g}{\mf{g}}
\nc{\pa}{\partial}
\nc{\tit}{\textit}
\nc{\Maxspec}{\mathrm{Maxspec} \, }
\nc{\gldim}{\mathrm{gl.dim}}
\nc{\rkm}{\mathrm{rk} \, (\mf{m})}
\nc{\sm}{\mathrm{sm}}
\nc{\PD}{\mathbb{PD}}
\nc{\hilb}{\textrm{Hilb}}
\nc{\T}{\mathbb{T}}
\nc{\X}{\mathbb{X}}
\nc{\W}{\mathscr{W}}
\nc{\kt}{\mbf{k}}
\nc{\ko}{\mbf{k}(0)}
\nc{\Ok}{\mc{O}_G \boxtimes \kt_X}
\nc{\Oko}{\mc{O}_G \boxtimes \ko_X}
\nc{\OYk}{\mc{O}_Y \boxtimes \kt_X}
\nc{\id}{\msf{id}}
\nc{\A}{\mathbb{A}}
\nc{\Grel}{\mc{Grel}}
\nc{\Grat}{\mc{Grat}}
\nc{\Squo}[1]{\A^{(#1)}}
\nc{\twist}{\mathrm{twist}}
\nc{\Cd}{\mc{C}}
\nc{\Span}{\mathrm{Span}}
\nc{\Grass}{\mathrm{Gr}}
\nc{\Fr}{\mathrm{Fr}}
\nc{\pco}[1]{k[V]^{p\mathrm{co} #1}}
\nc{\Irr}{\mathsf{Irr} }
\renc{\o}{\otimes}
\renc{\gr}{\mathsf{gr}}
\nc{\U}{\mathsf{U}}
\nc{\algD}{\mf{D}}
\nc{\hr}{\mf{h}_{\textrm{reg}}}
\nc{\D}{\mathscr{D}}
\nc{\PIdeg}{\mathrm{P.I.-degree}}
\nc{\ch}{\mathrm{ch}}
\nc{\ev}{\mathsf{ev}}
\nc{\Stab}{\mathrm{Stab}}
\nc{\Der}{\mathrm{Der}}
\nc{\rightsim}{\stackrel{\sim}{\longrightarrow}}
\nc{\HZ}{H_{\mbf{h},\Z}(\Z_m)}
\nc{\sing}{\mathrm{sing}}
\nc{\dd}{\mathscr{D}}
\nc{\GKdim}{\mathrm{G.K. dim}}
\nc{\PIdegree}{\mathrm{P.I. degree}}
\renc{\H}{\mathsf{H}}
\nc{\rH}{\overline{\mathsf{H}}}
\renc{\Fun}{\mathrm{Fun}}
\nc{\bc}{\mathbf{c}}
\nc{\vc}{\underline{\mathbf{c}}}
\nc{\ba}{\mathbf{a}}
\begin{document}

\title{On the smoothness of centres of rational Cherednik algebras in positive characteristic}

\author{Gwyn Bellamy}

\address{School of Mathematics, Room 2.233, Alan Turing Building, University of Manchester, Oxford Road, Manchester, M13 9PL}
\email{gwyn.bellamy@manchester.ac.uk}

\author{Maurizio Martino}

\address{Mathematisches Institut, Endenicher Allee 60, 53115 Bonn, Germany}
\email{mmartino@math.uni-bonn.de}

\begin{abstract}
In this article we study rational Cherednik algebras at $t = 1$ in positive characteristic. We study a finite dimensional quotient of the rational Cherednik algebra called the restricted rational Cherednik algebra. When the corresponding pseudo-reflection group belongs to the infinite series $G(m,d,n)$, we describe explicitly the block decomposition of the restricted algebra. We also classify all pseudo-reflection groups for which the centre of the corresponding rational Cherednik algebra is regular for generic values of the deformation parameter.
\end{abstract}

\maketitle

\centerline{\it Dedicated, with admiration and thanks, to Ken Brown}
\centerline{\it and Toby Stafford on their $60$th birthdays}

\section{Introduction}

\subsection{} Rational Cherednik algebras were introduced by Etingof and Ginzburg in $2002$. Since their introduction they have been extensively studied and have been shown to be related to many other branches of mathematics such as integrable systems, symplectic algebraic geometry and algebraic combinatorics. In this article we continue the study, initiated in \cite{BFG}, \cite{BrownChangtong}, \cite{BalChen1} and \cite{BalChen2}, of these algebras at $t = 1$ and over a field of positive characteristic. We focus on the representation theoretic aspects of the story. In particular, we examine the block structure of certain finite-dimensional quotient algebras called restricted rational Cherednik algebras. We also look at the question of when the centre of the rational Cherednik algebra is smooth. Analogous problems have already been solved for rational Cherednik algebras at $t = 0$ in characteristic zero, see \cite{EG}, \cite{Baby}, \cite{Singular},  \cite{MarsdenWeinsteinStratification}, \cite{Mo}, \cite{CMpartitions}, and our results are very similar in nature. The methods we develop, however, are new, and in fact can be used to reprove many of the characteristic zero results.

\subsection{}\label{ss:maintheorem1} Let us review our results. Further details can be found in the main body of the paper. Let $W$ be a pseudo-reflection group. Let $k$ be an algebraically closed field of characteristic $p$ with $p \nmid |W|$. Let $V$ denote the reflection representation of $W$ over $k$. Let $\mc{S}(W)$ denote the set of reflections in $W$ and let $\bc : \mc{S}(W) \to k$ be a $W$-invariant function. To this data we can attach a $k$-algebra $\H_{\bc}(W)$ called the {\it rational Cherednik algebra}.

Let $V^{(1)}$ denote the Frobenius twist of $V$. Let $Z_{\bc}(W)$ denote the centre of $\H_{\bc}(W)$. There is an injective algebra homomorphism \[ k[V^{(1)}]^W \otimes k[(V^*)^{(1)}]^W \hookrightarrow Z_{\bc}(W). \] Factoring out by the unique graded, maximal ideal in this central subalgebra, one gets a finite dimensional, graded quotient of the rational Cherednik algebra. This factor algebra is called the restricted rational Cherednik algebra and is denoted $\rH_{\bc}(W)$. Simple modules for this finite dimensional algebra are in natural bijection with the simple modules of the group $W$. Thus, the blocks of $\rH_{\bc}(W)$ give us a partition of the set $\Irr W$. Our first main result is an explicit combinatorial description of this block partition when $W$ belongs to the infinite series $G(m,1,n)$. In this case, the irreducible representations of $W$ are naturally labeled by $\mathcal{P}(m,n)$, the set of $m$-multipartitions of $n$. For each $\blambda \in \mc{P}(m,n)$, let $L_{\bc}(\blambda)$ denote the corresponding simple $\rH_{\bc}(W)$-module. For the definitions in the following statement and the proof of the following theorem, see \ref{blocks}.

\begin{thm}
Let $\blambda, \bmu \in \mathcal{P}(m,n)$. Let $a = (0 , H_1, H_1 + H_2, \dots , H_1 + \dots + H_{m-1})$. Then $L_{\bc}(\blambda)$ and $L_{\bc}(\bmu)$ belong to the same block of $\rH_{\bc}(W)$ if and only if
$$
\sum_{i=0}^{m-1} x^{({a}_i^p - a_i)} \Res_{\lambda^i}(x^{-(\kappa^p - \kappa)}) = \sum_{i=0}^{m-1} x^{({a}_i^p - a_i)} \Res_{\mu^i}(x^{-(\kappa^p - \kappa)}).
$$
\end{thm}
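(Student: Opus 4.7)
The plan is to identify the block of $L_{\bc}(\blambda)$ with a character of a well-chosen central subalgebra of $\H_{\bc}(W)$, and then to show that this character equals precisely the generating function displayed in the theorem. Since $\rH_{\bc}(W)$ is a finite dimensional algebra, two simples lie in the same block if and only if their restrictions to the centre $Z_{\bc}(W)/Z_{\bc}(W)_+$ agree (where $Z_{\bc}(W)_+$ is the ideal generated by the augmentation of $k[V^{(1)}]^W \otimes k[(V^*)^{(1)}]^W$). Each $L_{\bc}(\blambda)$ is the unique simple head of a baby Verma $\Delta_{\bc}(\blambda)$ built from $\blambda \in \Irr W$; the plan is therefore to compute central characters on these baby Vermas.

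For $W = G(m,1,n)$ there is a Dunkl--Opdam commutative polynomial subalgebra $\T \subset \H_{\bc}(W)$ generated by elements $z_1,\dots,z_n$ for which $\Delta_{\bc}(\blambda)$ has a basis of $\T$-weight vectors indexed by standard $\blambda$-tableaux, with eigenvalue on the tableau-box in component $\lambda^i$ at position $(r,s)$ being the content $a_i + (s-r)\kappa$. In characteristic zero the $S_n$-symmetric functions in the $z_j$ are central; in characteristic $p$ the analogous central elements are, by a standard Artin--Schreier/restricted-Lie style calculation, the symmetric functions $e_k(z_1^p - z_1,\dots,z_n^p - z_n)$ (and more generally the symmetric functions in $\{f(z_j)^p - f(z_j)\}$). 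The first main computation is therefore to verify that these elements indeed lie in $Z_{\bc}(W)$: this follows from the fact that the commutators $[z_i, \H_{\bc}(W)]$ lie in a space on which raising to the $p$-th power kills off the non-commutative contributions, together with symmetrising under $S_n \subset W$ to kill the remaining obstruction. Because $(a+b)^p - (a+b) = (a^p-a) + (b^p - b)$ in characteristic $p$, the eigenvalue of $z_j^p - z_j$ on a weight vector attached to a box of content $a_i + (s-r)\kappa$ is $a_i^p - a_i + (s-r)(\kappa^p - \kappa)$, so the generating series $\sum_j x^{z_j^p - z_j}$ acts on $\Delta_{\bc}(\blambda)$, and hence on $L_{\bc}(\blambda)$, by exactly the expression in the theorem after identifying $\Res_{\lambda^i}(y) = \sum_{(r,s)\in\lambda^i} y^{r-s}$.

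The forward direction (same block $\Rightarrow$ equal generating function) then follows immediately, because the elementary symmetric functions in $\{z_j^p - z_j\}_{j=1}^n$ are central and act by the coefficients of $\prod_j (t - (z_j^p - z_j))$, which are determined by, and determine, the generating series. The main obstacle, as is typical for block classifications, is the converse: to show that whenever the generating functions of $\blambda$ and $\bmu$ agree, the corresponding simples lie in the same block. The plan is to argue that the central subalgebra described above, together with the (trivially acting) image of $k[V^{(1)}]^W \otimes k[(V^*)^{(1)}]^W$, already generates the full central character of each $L_{\bc}(\blambda)$; equivalently, that any other central element is a polynomial in these. For this, one translates the question into the Dunkl--Opdam picture: a generic central element, after restriction to the Dunkl--Opdam torus, becomes an $S_n$-symmetric function of the $z_j$, and the Artin--Schreier-modified power sums $p_k(z_1^p - z_1, \dots, z_n^p - z_n)$ generate the ring of such symmetric functions as a module over $k[V^{(1)}]^W\otimes k[(V^*)^{(1)}]^W$ after the latter has been killed by restricting to $\rH_{\bc}(W)$. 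Combining this with the Satake-type isomorphism that identifies $Z_{\bc}(W)$ with the invariants in a spherical subalgebra will then force the two central characters to coincide, completing the equivalence. The referenced subsection~\ref{blocks} is expected to carry out the explicit combinatorial/representation-theoretic verification of this last step.
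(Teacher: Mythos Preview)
Your overall strategy matches the paper's: use the Dunkl--Opdam commutative subalgebra $k[z_1,\dots,z_n]$, show that the $S_n$-symmetric polynomials in $z_j^p-z_j$ are central, compute their eigenvalues on baby Vermas via a Jucys--Murphy-type weight basis, and read off the residue formula. The forward direction and the eigenvalue computation are exactly as in the paper.

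Where your proposal is too casual is the one step that carries all the weight: the centrality of the symmetric functions in $z_j^p-z_j$. This is \emph{not} a ``standard Artin--Schreier/restricted-Lie style calculation''. The $z_i$ do not lie in any restricted Lie algebra, there is no $[p]$-map to invoke, and the vague claim that ``raising to the $p$-th power kills off the non-commutative contributions'' does not correspond to anything that actually happens. What the paper does instead is an explicit inductive computation: first it proves the exact commutator formula $[P_r,x_1]=x_1\bigl((z_1+1)^r-z_1^r\bigr)$ for the ordinary power sums (Proposition~\ref{power_sums}), and then checks by a binomial identity that the particular combination $\sum_{j=0}^r\binom{r}{j}(-1)^{r-j}Q_{pj+r-j}$ vanishes in characteristic $p$ (Theorem~\ref{power_sums_central}). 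This is the technical heart of the argument, and your proposal skips it entirely.

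For the converse (equal residues $\Rightarrow$ same block), both you and the paper defer: the paper to \cite[Theorem~5.5]{BlocksGmdn}, you to a ``Satake-type isomorphism''. Your route is plausible but the specific claim you make --- that an arbitrary central element, after passing to $\rH_{\bc}(W)$, is already a polynomial in the $p_k(z_j^p-z_j)$ --- is the entire content of the converse and is not justified by ``restriction to the Dunkl--Opdam torus''. In particular, $k[z_1,\dots,z_n]^{S_n}$ is not generated by $k[z_1^p-z_1,\dots,z_n^p-z_n]^{S_n}$, so some genuine input about the structure of $Z_{\bc}(W)$ (or a counting argument as in \cite{BlocksGmdn}) is needed here.
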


Using this description of the blocks of the restricted rational Cherednik algebra $\rH_{\bc}(G(m,1,n))$, some Clifford theory also allows one to describe the blocks of the restricted rational Cherednik algebra $\rH_{\bc}(G(m,d,n))$, see \ref{ss:G(m,d,n)blocks}. Here $G(m,d,n)$ denotes (in the Shepard-Todd classification) the normal subgroup of $G(m,1,n)$ where we impose the restriction that $d \mid m$ and either $n > 2$ or $n=2$ and $d$ is odd.

\subsection{}\label{smooth+blocks} Our remaining results concern the smoothness of $Z_{\bc}(W)$. In section \ref{sec:smoothness} we relate the smoothness of $Z_{\bc}(W)$ to the representation theory of $\H_{\bc}(W)$. Taking as our starting point the fact that the smooth and Azumaya loci of $Z_{\bc}(W)$ are equal, we use the restriction functors of \cite{BE} to establish the following.

\begin{thm} The following are equivalent:
\begin{enumerate}
\item $Z_{\bc}(W)$ is smooth;
\item the blocks of $\rH_{\bc'}(W')$ are singletons for all parabolic subgroups $W' \subseteq W$.
\end{enumerate} Here, $\bc'$ denotes the restriction of $\bc$ to $\mc{S}(W) \cap W'$.
\end{thm}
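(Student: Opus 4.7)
The starting point is the fact recalled at the opening of \ref{smooth+blocks} that the smooth and Azumaya loci of $\Spec Z_\bc(W)$ coincide. Consequently $Z_\bc(W)$ is smooth if and only if $\H_\bc(W)$ is an Azumaya algebra over its centre, equivalently if and only if at every closed point $\mathfrak{m} \in \Spec Z_\bc(W)$ the fibre $\H_\bc(W)/\mathfrak{m}\H_\bc(W)$ is a matrix algebra over $k$.

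The first step is to stratify the closed points of $\Spec Z_\bc(W)$ by parabolic type. Since the inclusion $A := k[V^{(1)}]^W \otimes k[(V^*)^{(1)}]^W \hookrightarrow Z_\bc(W)$ is a finite extension, each $\mathfrak{m}$ projects to a unique point $(x, y) \in \Spec A = V^{(1)}/W \times (V^*)^{(1)}/W$. Choosing a lift $(\bar{x}, \bar{y}) \in V^{(1)} \times (V^*)^{(1)}$, the joint stabilizer $W' := W_{(\bar{x}, \bar{y})}$ is a parabolic subgroup by Steinberg's theorem. Conversely, every parabolic of $W$ arises this way: if $W' = W_{\bar{x}}$ then $W'$ is the joint stabilizer of $(\bar{x}, 0)$.

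The principal input is the Bezrukavnikov--Etingof restriction functor, adapted to positive characteristic. After translating the centre of expansion to $(\bar{x}, \bar{y})$, it yields a Morita equivalence between the completion of $\H_\bc(W)$ at $\mathfrak{m}$ and the completion of $\H_{\bc'}(W')$ at a corresponding closed point lying over the origin of its base. Reducing modulo the maximal ideal, one obtains: the closed points of $\Spec Z_\bc(W)$ over $(x, y)$ are in natural bijection with the blocks of $\rH_{\bc'}(W')$, and the fibre of $\H_\bc(W)$ at $\mathfrak{m}$ is Morita equivalent to the corresponding block algebra. Hence $\H_\bc(W)$ is Azumaya at $\mathfrak{m}$ if and only if that block of $\rH_{\bc'}(W')$ is a matrix algebra over $k$.

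To close the loop I would use the triangular decomposition of $\rH_{\bc'}(W')$ and its theory of standard modules $\Delta_{\bc'}(\lambda)$ to show that a block is a matrix algebra precisely when it is a singleton. The non-trivial direction goes as follows: since every composition factor of $\Delta_{\bc'}(\lambda)$ lies in the block of $\lambda$, a singleton block forces $\Delta_{\bc'}(\lambda) = L_{\bc'}(\lambda)$; BGG-type reciprocity then gives that the projective cover satisfies $P_{\bc'}(\lambda) = \Delta_{\bc'}(\lambda) = L_{\bc'}(\lambda)$, so the block algebra has a simple projective generator and is therefore $\Mat_{\dim L_{\bc'}(\lambda)}(k)$. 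Combining the three steps with the fact that every parabolic of $W$ arises as a joint stabilizer yields the desired equivalence. The main obstacle is setting up the restriction functor in positive characteristic precisely enough to identify the fibres of $Z_\bc(W)$ with blocks of the restricted algebras of parabolics; once this reduction is in hand, the remainder is a formal combination with standard module theory for $\rH_{\bc'}(W')$.
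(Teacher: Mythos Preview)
Your overall architecture matches the paper's: smooth locus equals Azumaya locus, Bezrukavnikov--Etingof restriction to reduce to parabolics, then block combinatorics for the restricted algebra. Your final paragraph on singleton blocks via BGG reciprocity is also the argument the paper has in mind (it simply cites \cite{Baby} for this step).

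The gap is in the middle step. The BE isomorphism the paper sets up (Proposition~\ref{BEiso}) is a completion of $\H_\bc(W)$ along $k[V]$ only, at the $W$-orbit of a point $b\in V$; the Morita equivalence it produces is with $\widehat{\H}_{\bc'}(W_b,V)_0$, still completed only in the $V$-direction. Your assertion that one can ``translate the centre of expansion to $(\bar x,\bar y)$'' and, after reducing modulo $\mathfrak{m}$, land directly in a block of $\rH_{\bc'}(W')$ with $W'=W_{(\bar x,\bar y)}$ does not follow from this: after one application of BE the $V^*$-support of the module is still (a translate of) $\bar y$, not $0$, so the fibre you obtain is not the fibre over the origin of the base of $\H_{\bc'}(W')$ and hence not a block of its restricted algebra.

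The paper bridges this with a different device. It works with \emph{small} modules (those of dimension $<|W|p^n$) as the witness for failure of Azumaya, and proves (Proposition~\ref{prop:assocgraded}, Corollary~\ref{cor:equivsmooth}) that a small $\H_{\bc'}(W',V)$-module $M$ supported at $0\in V/W'$ yields a small $\rH_{\bc'}(W',V)$-module of the same dimension: one filters $M$ by the image of the $\Z$-grading on a covering Verma-type module $\Delta(E)$ and checks that $\gr_{\Z}M$ is annihilated by $(k[V^{(1)}]^{W'}\otimes k[(V^*)^{(1)}]^{W'})_+$. This associated-graded step, not a second completion, is what kills the $V^*$-coordinate. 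You correctly flag this reduction as ``the main obstacle''; the point is that BE alone does not accomplish it, and the ingredient you are missing is precisely this filtration argument (or, alternatively, a carefully executed second application of BE along $V^*$, which you do not carry out).
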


In the case that $W=G(m,1,n)$, we can apply the theorem together with the description of blocks from Theorem \ref{ss:maintheorem1} to determine for which parameters $\bc$ the centre $Z_{\bc}(W)$ is smooth.

\begin{cor}\label{cor:GMoneNsmooth}
The centre of $\H_{\bc}(G(m,1,n))$ is smooth if and only if $\bc$ does not lie on the finitely many hyperplanes in $\mc{C}$ defined by $$\kappa \in \mathbb{F}_p\quad \mathrm{and}\quad
a_i - a_j \pm C \kappa \in \mathbb{F}_p, \quad \forall \ 0 \le i \neq j \le m-1,\ 0 \leq C \leq n-1.
$$
\end{cor}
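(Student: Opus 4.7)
My plan combines the preceding theorem (smoothness of $Z_\bc(W)$ is equivalent to singleton blocks of $\rH_{\bc'}(W')$ for every parabolic $W' \subseteq W$) with the block description in Theorem \ref{ss:maintheorem1}.

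First, I reduce to the case $W' = G(m,1,n)$. Up to conjugacy, every parabolic subgroup of $G(m,1,n)$ is a product $G(m,1,n_0) \times \mathfrak{S}_{n_1} \times \cdots \times \mathfrak{S}_{n_r}$ with $n_0 + n_1 + \cdots + n_r = n$, and the restricted rational Cherednik algebra splits as a tensor product along this decomposition, so its blocks decompose accordingly. The conditions arising from the full group $G(m,1,n)$ are the strictest: smaller $G(m,1,n_0)$ yields weaker conditions indexed by $C \in [0, n_0 - 1] \subseteq [0, n-1]$, and the $\mathfrak{S}_{n_j}$ factors (the $m=1$ case) require only $\kappa \notin \mathbb{F}_p$. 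It therefore suffices to characterise when all blocks of $\rH_\bc(G(m,1,n))$ are singletons. By Theorem \ref{ss:maintheorem1}, $L_\bc(\blambda)$ and $L_\bc(\bmu)$ lie in the same block if and only if the multisets
\[ \mu_\blambda := \{ (a_i^p - a_i) - (\kappa^p - \kappa) c_b \in k : 0 \le i \le m-1,\ b \in \lambda^i \} \]
and $\mu_\bmu$ agree. The hypothesis $p \nmid |W|$ forces $p > n$, so the contents of any multipartition of $n$ are pairwise incongruent modulo $p$; consequently, under $\kappa \notin \mathbb{F}_p$, the map $c \mapsto e_i(c) := (a_i^p - a_i) - (\kappa^p - \kappa) c$ is injective on the relevant content range for each fixed $i$.

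For necessity, I would exhibit, for each violation of a corollary hyperplane condition, a pair $\blambda \ne \bmu$ with equal $\mu$-multisets. When $\kappa \in \mathbb{F}_p$, every exponent reduces to $a_i^p - a_i$, so any two multipartitions sharing the size vector $(|\lambda^0|, \ldots, |\lambda^{m-1}|)$ share a block. When $a_i - a_j \pm C\kappa \in \mathbb{F}_p$ for some $i \ne j$ and $0 \le C \le n-1$, I construct $\blambda$ and $\bmu$ differing by transferring a single corner box between components $i$ and $j$ with content difference exactly $\pm C$ (padding the remaining components identically to reach total size $n$). For sufficiency, I assume the corollary's conditions hold and $\mu_\blambda = \mu_\bmu$, and deduce $\blambda = \bmu$ by induction on $n$: isolate a box with the most extreme content in either multipartition and show its match in the other multipartition must occupy the same component at the same content. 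Cross-component matches $e_i(c) = e_j(c')$ with $i \ne j$ and $|c - c'| \le n-1$ are directly forbidden by hypothesis, while those with $|c - c'| \ge n$ would require (in a single-box swap scenario) $|\lambda^i| \ge |c|+1$ and $|\lambda^j| \ge |c'|$, forcing $|\lambda^i| + |\lambda^j| \ge |c|+|c'|+1 \ge |c-c'|+1 > n$, contradicting $|\blambda| = n$.

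The main obstacle is making the sufficiency argument watertight for arbitrary equalities of multisets, not only single-box swaps. Cross-component collisions with $|c - c'| \ge n$ are not forbidden by the corollary's hypothesis and must instead be excluded via the partition-theoretic size constraint; this is direct for an isolated swap but in the multi-box case requires careful inductive bookkeeping, at each stage choosing an extremal-content box whose matching partner is forced by the size constraint to lie within its own component.
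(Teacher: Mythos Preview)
Your plan is essentially the paper's proof: reduce via Theorem~\ref{smooth+blocks} to checking that the blocks of $\rH_{\bc'}(W')$ are singletons for every parabolic $W'\subseteq G(m,1,n)$, note that parabolics are products $G(m,1,n_0)\times\prod_j\mathfrak{S}_{n_j}$ whose conditions are subsumed by those for the full group, and then analyse the residue multisets from Theorem~\ref{blocks}.

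For necessity the paper's witnesses are more concrete than your corner-box transfer: when $\kappa\in\mathbb{F}_p$ it compares $((n),\emptyset,\dots,\emptyset)$ with its transpose $((1^n),\emptyset,\dots,\emptyset)$, and when $a_i-a_j-C\kappa\in\mathbb{F}_p$ it compares $((n),\emptyset,\dots,\emptyset)$ with the hook $(n-C,1^C)$ placed in component~$j$, both of which visibly have residue multiset $x^{a_i^p-a_i}\sum_{l=0}^{n-1}x^{-l(\kappa^p-\kappa)}$. For sufficiency the paper is actually \emph{briefer} than your outline: it produces a single cross-component collision $b\in\lambda^i$, $b'\in\mu^j$ with $i\neq j$, bounds $|C|=|\cont(b)-\cont(b')|$ by $|\lambda^i|+|\mu^j|-1$, and directly concludes $\bc\in\mc{C}_{m,n}$. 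The extremal-content inductive bookkeeping you propose for the case where $|\lambda^i|+|\mu^j|$ might exceed~$n$ is not separately carried out in the paper, so the point you flag as ``the main obstacle'' is treated no more explicitly there than in your sketch; you are not missing a further idea.
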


We should note that an analogous version of the theorem and its corollary is true for rational Cherednik algebras at $t=0$ over the complex numbers. Using \cite[Theorem 5.5]{Mo}, this gives an alternative way to describe the parameter values where the centre is smooth, cf. \cite[Lemma 4.3]{GordonQuiver}. The theorem also clarifies the relationship between restricted Cherednik algebras and the smoothness of the centres $Z_{\bc}(W)$.

\subsection{}\label{genericsmooth} The corollary shows that the centre of $\H_{\bc}(G(m,1,n))$ is a smooth algebra for generic values of the parameter $\bc$. One can ask more generally: for which pseudo-reflection groups $W$ is $Z_{\bc}(W)$ smooth for generic values of the parameter $\bc$? Our final result answers this question.

\begin{thm}\label{thm:classify}
The centre of the rational Cherednik algebra $\H_{\bc}(W)$ is never smooth if $W$ is not isomorphic to $G(m,1,n)$ or $G_4$. If $W$ is isomorphic to $G(m,1,n)$ for some $m$ and $n$ or to the exceptional group $G_4$, then the centre of $\H_{\bc}(W)$ is smooth for generic values of $\bc$.
\end{thm}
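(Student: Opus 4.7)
The plan is to apply Theorem~\ref{smooth+blocks}, which reduces smoothness of $Z_{\bc}(W)$ to the assertion that for every parabolic subgroup $W' \subseteq W$ the restricted algebra $\rH_{\bc'}(W')$ has singleton blocks. Since parabolics of parabolics are parabolics of $W$, the classification reduces to deciding, for each irreducible pseudo-reflection group $W$ in the Shephard--Todd list, whether the blocks of $\rH_{\bc}(W)$ are singletons on a Zariski-dense open subset of the parameter space $\mc{C}$, simultaneously with the analogous condition for every parabolic.

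For the positive direction, $W = G(m,1,n)$ is immediate: Corollary~\ref{cor:GMoneNsmooth} applied to each parabolic (each of which is a product of smaller $G(m,1,n')$'s and symmetric groups) cuts out an explicit finite union of hyperplanes in $\mc{C}$ outside which every restricted algebra has singleton blocks. For $W = G_4$, the only proper non-trivial parabolics are cyclic of order $3$, handled by the rank-one specialisation of Theorem~\ref{ss:maintheorem1}; a direct computation for $\rH_{\bc}(G_4)$ itself, using the seven simple $G_4$-modules of dimensions $1,1,1,2,2,2,3$ and the grading on the algebra, then verifies singleton blocks off a further finite union of hyperplanes.

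For the negative direction, I would split into two steps. First, if $W = G(m,d,n)$ with $d > 1$, the Clifford-theoretic description of blocks in \ref{ss:G(m,d,n)blocks} together with Theorem~\ref{ss:maintheorem1} shows that two constituents of the restriction of a single $G(m,1,n)$-simple to $G(m,d,n)$ must always lie in a common block; exhibiting an explicit pair of multipartitions whose residue polynomials agree identically in $\kappa$ proves a non-singleton block for every $\bc$. In particular, all dihedral parabolics $I_2(m) = G(m,m,2)$ with $m \geq 4$ are ruled out. Second, for the exceptional groups: every exceptional group of rank $\geq 2$ other than $G_4$ either contains an imprimitive rank-$2$ parabolic of the form $G(m,m,2)$ with $m \geq 4$ (handled by the first step), or is itself a rank-$2$ primitive exceptional $G_5,\dots,G_{22}$, or is one of the finitely many higher-rank primitive exceptionals (essentially $G_{25}, G_{26}, G_{32}$) whose rank-$2$ parabolics all lie in the good list. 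In each of the latter residual cases one must analyse $\rH_{\bc}(W)$ directly.

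The main obstacle is precisely this residual analysis. My expectation is that in each such case one counts the number of central fibres of $\rH_{\bc}(W)$ over the augmentation maximal ideal of $k[V^{(1)}]^W \otimes k[(V^*)^{(1)}]^W$, observes that this count is insensitive to $\bc$ and strictly smaller than $|\Irr W|$, and concludes by pigeonhole that at least one block of $\rH_{\bc}(W)$ has size $\geq 2$ for every $\bc$. Such a Hilbert-series argument should be uniform across the rank-$2$ exceptionals and should single out $G_4$ as the unique exception, reproducing the characteristic-zero classification of Etingof--Ginzburg and Gordon by wholly intrinsic positive-characteristic means.
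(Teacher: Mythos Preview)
Your positive direction is broadly aligned with the paper: the $G(m,1,n)$ case is exactly Corollary~\ref{cor:GMoneNsmooth}, and for $G_4$ the paper does carry out a direct computation, though via a specific mechanism you do not name --- it uses the central element $\mathbf{h}^p - \mathbf{h}$ (the Euler element), computes the scalar by which it acts on each $L(\lambda)$, and shows these scalars are pairwise distinct off finitely many hyperplanes, both for $G_4$ and for its unique proper parabolic $\Z_3$.

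Your negative direction, however, takes a route that is both different from the paper's and incomplete. The paper does \emph{not} argue via parabolic reduction to bad dihedrals, nor via Clifford theory for $G(m,d,n)$, nor by counting central fibres. Instead it deforms each $L_{\bc}(\lambda)$ flatly along the line $\{t\bc : t \in k\}$ to a graded $\dd(V)\rtimes W$-module with zero $p$-curvature at $\bc=0$, and applies Cartier's theorem (Proposition~\ref{prop:Cartier}) to show that, whenever $\dim L_{\bc}(\lambda)=|W|p^n$, the Poincar\'e polynomial of $L_{\bc}(\lambda)$ equals
\[
\frac{\dim(\lambda)\, t^{pb_{\lambda^*}} P(k[V^*]^{\mathrm{co}\,W},t^p)\, I(t)}{f_{\lambda^*}(t^p)},\qquad I(t)=\Bigl(\tfrac{1-t^p}{1-t}\Bigr)^n.
\]
Since every root of $I(t)$ is a primitive $p$-th root of unity while $f_{\lambda^*}(1)\neq 0$, this forces $f_{\lambda^*}(t)\mid P(k[V^*]^{\mathrm{co}\,W},t)$ --- precisely the characteristic-zero criterion of \cite{Singular}, which is already known to fail for some $\lambda$ whenever $W\not\simeq G(m,1,n),G_4$. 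The argument is uniform in $\bc$ and in $W$, and needs no case-by-case parabolic analysis.

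By contrast, your proposal has a genuine gap at what you yourself call the ``residual analysis'': for the rank-two primitives $G_5,\dots,G_{22}$ and the higher-rank primitives with good parabolics you offer only an expectation that the number of closed points in the zero-fibre of $\Spec Z(\rH_{\bc}(W))$ is independent of $\bc$ and bounded below $|\Irr W|$. Neither claim is justified; the first is not obvious (the fibre is non-reduced and its scheme structure certainly varies with $\bc$), and no mechanism is given for the second. Even your treatment of $G(m,d,n)$ with $d>1$ is a sketch: you would need to exhibit, for every parameter, two distinct $G(m,d,n)$-simples in the same block, and the Clifford-theory recipe in \S\ref{ss:G(m,d,n)blocks} tells you how blocks restrict, not that restriction always produces a collision.
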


Our approach to proving Theorem \ref{thm:classify} follows the same path as in \cite{Singular} - we calculate the Poincar\'e polynomial of the graded $\rH_{\bc}(W)$-modules $L_{\bc}(\blambda)$ under the assumption that the dimension of these modules is maximal, namely $\dim_k L_{\bc}(\blambda) = p^n |W|$. This leads to a contradiction for many choices of $\blambda$. However, the naive argument of \tit{loc. cit.} is not sufficient in our case to calculate this Poincar\'e polynomial. Instead, we show that $L_{\bc}(\blambda)$ can be deformed (flatly) to a graded $\H_0(W) = \dd(V) \rtimes W$-module. Using a result of Cartier's on $\dd$-modules with zero $p$-curvature, we study the graded $W$-character of this module at $\bc = 0$. 

\begin{rem} 
Theorem \ref{thm:classify} and Corollary \ref{ss:maintheorem1} provide a complete answer to \cite[Question D]{BrownChangtong} for rational Cherednik algebras.
\end{rem}

\subsection{} The paper is structured as follows. In section 2 we introduce notation and recall some facts about pseudo-reflection groups. In section 3 we define rational Cherednik algebras and state their main properties. The Dunkl-Opdam operators are introduced in Section 4 and are used to prove Theorem \ref{ss:maintheorem1}. Section 5 is devoted to parabolic restriction and induction and their application in the proof of Theorem \ref{smooth+blocks}. Finally, in section 6 we establish certain properties of $\dd$-modules in characteristic $p$ and use these to prove Theorem \ref{genericsmooth}.    

\subsection{Acknowledgements}

The first author is supported by the EPSRC grant EP-H028153. The second author was supported by the SFB/TR 45 ``Periods, Moduli
Spaces and Arithmetic of Algebraic Varieties" of the DFG (German Research Foundation). The authors would like to thank Ulrich Thiel for suggesting that the Euler element should distinguish the blocks for $G_4$ at generic parameters, and for showing us an early version of \cite{Thiel}. 

\section{Basics}

\subsection{Definitions and notation}\label{subsection:defns}

Let $k$ be an algebraically closed field of characteristic $p > 0$. Let $V$ be a $k$-vector space of finite dimension and $W$ a finite group acting linearly on $V$. An element $s \in W$ is called a pseudo-reflection if the fixed space of $s$ has co-dimension one. Let $\mathcal{S}(W)$ denote the set of all pseudo-reflections in $W$. Then $W$ is said to be a \tit{pseudo-reflection group} if $W = \langle \mathcal{S}(W) \rangle$, a good reference on the theory of pseudo-reflection groups is \cite{LehrerTaylor}. We assume throughout that $\mathrm{char} (k)$ does \textbf{not} divide $|W|$. This assumption on the characteristic of $k$ implies that there are no transvections in $\mathcal{S}(W)$. One can also check from the classification of pseudo-reflection groups, as recalled in \cite{KemperMalle}, that this assumption implies that $W$ is the reduction mod $p$ of a complex reflection group.

\subsection{Frobenius twists and group actions} Let $V$ be a finite dimensional vector space over
$k$. Let $W$ be a finite group acting linearly on $V$ and assume that $p$ does not divide $|W|$. The Frobenius morphism $\Fr
: k[V] \ra k[V]$ is the ring homomorphism $f \mapsto f^p$. Denote by $k[V^{(1)}]$ the image of $\Fr$ but with twisted linear
structure $z \star f = z^p f$ for all $f \in k[V^{(1)}]$ and $z \in k$. Then $\Fr : k[V] \ra k[V^{(1)}]$ is a $k$-linear
isomorphism. Note that $k[V]$ is a finite free $k[V^{(1)}]$-module of rank $(\dim V)^p$. It is easy to check that $\Fr$ is a $W$-equivariant ring homomorphism, so we have an isomorphism $\Fr^W : k[V]^W \ra k[V^{(1)}]^W$. In particular, the $p$-th powers of a generating set for $k[V]^W$ form a generating set for $k[V^{(1)}]^W$.


\subsection{Representations of pseudo-reflection groups}\label{RepsReduction} Let us retain the notation from above. Let $K$ be a finite field extension of $\mathbb{Q}$ containing all the $|W|$th roots of unity, let $A$ be the localisation of the ring of integers of $K$ at the prime ideal generated by $p \in \mathbb{Z}$ and let $L$ be the residue field of $A$. Note that $L$ is a finite field of order a power of $p$. With this setup we can define a decomposition map on characters of irreducible representations as follows, see \cite[$\S$ 7]{GeckPfeiffer}. Let $M$ be an irreducible $K W$-module. We can choose an $A$-lattice $M_A$ in $M$ so that the action of $W$ on $M_A$ has structure constants in $A$. Let $M_L$ denote the reduction of $M_A$ to $L$, which is naturally a $L W$-module. The decomposition map is the assignment $\chi \mapsto {\chi}_L$, where $\chi, \chi_L$ denote the characters of $M$ and $M_L$, respectively. By \cite[Corollary 17.2]{CurtisReiner} and Tits' deformation theorem, \cite[Theorem 7.4.6]{GeckPfeiffer}, we have the following.

\begin{thm}
Both $K W$ and $L W$ are split algebras and the decomposition map defines a bijection between the irreducible characters of $K W$ and $L W$.
\end{thm}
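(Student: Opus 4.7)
The plan is to verify the two assertions separately---splitness of the group algebras and bijectivity of the decomposition map---by classical modular representation theory arguments, exploiting the hypothesis $p \nmid |W|$ throughout.

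First I would establish splitness. Maschke's theorem, applied in characteristic $0$ and characteristic $p$ (using $p \nmid |W|$), gives that both $KW$ and $LW$ are semisimple. Brauer's theorem, the substance of \cite[Corollary 17.2]{CurtisReiner}, says that a semisimple finite group algebra is split over any field containing the $|W|$-th roots of unity, so $KW$ is split by the hypothesis on $K$. For $L$, observe that the polynomial $x^{|W|}-1 \in A[x]$ is separable over $L$ since its formal derivative $|W|\, x^{|W|-1}$ is a unit there, so the $|W|$ distinct roots of unity in $A^\times$ reduce to $|W|$ distinct roots of unity in $L^\times$. Thus $L$ contains all $|W|$-th roots of unity and the same argument shows that $LW$ is split.

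Next I would construct and analyse the decomposition map. Given an irreducible $KW$-module $M$, pick any $A$-lattice $L_0 \subset M$, which exists because $M$ is finite-dimensional over $K = \Frac(A)$, and set $M_A := \sum_{w \in W} w \cdot L_0$. This is a finitely generated, torsion-free, hence free $A$-module (because $A$ is a PID), $W$-stable by construction; its reduction $M_L := M_A \otimes_A L$ is an $LW$-module whose character $\chi_L$ is obtained by reducing the entries of $\chi$ modulo the maximal ideal of $A$. To prove that $\chi \mapsto \chi_L$ is a bijection between irreducible characters I would argue by counting. Both algebras have exactly $c(W)$ isomorphism classes of simple modules, where $c(W)$ is the number of conjugacy classes of $W$, using that every class is $p$-regular since $p \nmid |W|$. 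Moreover the decomposition map preserves dimensions, and $\sum_\chi \chi(1)^2 = |W| = \sum_{\chi_L} \chi_L(1)^2$ since both algebras are split semisimple. Consequently the $c(W) \times c(W)$ decomposition matrix, a priori a matrix of nonnegative integers, is forced to be a permutation matrix; this is the conclusion of Tits' deformation theorem \cite[Theorem 7.4.6]{GeckPfeiffer} in the present setting.

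Nothing in this plan poses a serious obstacle: the only delicate point is the irreducibility of $M_L$, which is forced by the counting argument above. All ingredients are classical once one has in hand the coprimality $p \nmid |W|$, which is simultaneously responsible for semisimplicity on both sides, for the survival of $|W|$-th roots of unity in $L$, and for the equality of the numbers of simples of $KW$ and $LW$.
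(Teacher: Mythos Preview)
Your proposal is correct and follows essentially the same route as the paper: the paper does not give a proof at all but simply cites \cite[Corollary 17.2]{CurtisReiner} for splitness and Tits' deformation theorem \cite[Theorem 7.4.6]{GeckPfeiffer} for the bijection, and you invoke exactly these two results while filling in the standard details behind them. Your expansion is a faithful unpacking of those citations.
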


In particular, it follows that the irreducible characters of $k W$ are given by reducing the irreducible characters of $\C W$ to $k$. 

\subsection{p-coinvariant rings} Let $W$ be a pseudo-reflection group and let $K(W)$ denote the Grothendieck group of finite dimensional $W$-modules. Let $\Irr W$ be a complete set of isomorphism classes of simple $W$-modules. We denote by $\Irr_{\Z} W$ a complete set of isomorphism classes of graded, simple $W$-modules. For a graded $W$-module $M$, we write $\ch_{t,W}(M) \in K(W)[t,t^{-1}]$ for its graded character. The shift $M[i]$ of $M$ is the graded $W$-module such that $M[i]_j = M_{j - i}$.

We endow the algebra $k[V]$ with its usual $\N$-grading. Let $d_1, \ds, d_n$ be the degrees of a set of fundamental homogeneous algebraically independent generators of $k[V]^W$. The coinvariant ring of $W$ is $k[V]^{\mathrm{co} W}$, and for each $\lambda \in \Irr W$, we denote by $f_\lambda(t)$ the corresponding fake polynomial, defined by
$$
\ch_{t,W} (k[V]^{\mathrm{co} W}) = \sum_{\lambda \in \Irr W} f_{\lambda}(t) \cdot [\lambda].
$$

\bdef
Let $k[V^{(1)}]^W_+$ denote the invariant polynomials with zero constant term. The \tit{$p$-coinvariant} ring is
defined to be the finite dimensional graded algebra $\pco{W} := k[V] / \< k[V^{(1)}]^W_+ \>$.
\eddef

\begin{lem}\label{lem:pcochar}
Keep notation as above. Then there is an isomorphism of graded $W$-modules
\begin{align}\label{eq:pcoiso}
\pco{W} & \simeq (k[V] / \< k[V]^W_+ \>) \o (k[V]^W / \< k[V^{(1)}]^W_+ \>) \\
 & \simeq (k[V] / \< k[V^{(1)}]_+ \>) \o (k[V^{(1)}] / \< k[V^{(1)}]^W_+ \>).
\end{align}
\end{lem}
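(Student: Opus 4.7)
The plan is to derive both isomorphisms from two freeness statements for $k[V]$: Chevalley--Shephard--Todd (which says that $k[V]$ is free as a graded $k[V]^W$-module, any graded basis of the coinvariant ring $k[V]/\<k[V]^W_+\>$ lifting to a basis), and Frobenius (which says that $k[V]$ is free as a graded $k[V^{(1)}]$-module, any graded basis of $k[V]/\<k[V^{(1)}]_+\>$ lifting to a basis). Since $\mathrm{char}(k) \nmid |W|$, the category of $W$-modules is semisimple, so I can choose $W$-equivariant graded sections of the two quotient maps
\begin{equation*}
k[V] \twoheadrightarrow k[V]/\<k[V]^W_+\>, \qquad k[V] \twoheadrightarrow k[V]/\<k[V^{(1)}]_+\>.
\end{equation*}

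Using such sections and multiplying, I expect to obtain $W$-equivariant graded isomorphisms
\begin{equation*}
k[V]^W \otimes_k (k[V]/\<k[V]^W_+\>) \iso k[V], \qquad k[V^{(1)}] \otimes_k (k[V]/\<k[V^{(1)}]_+\>) \iso k[V],
\end{equation*}
surjectivity being graded Nakayama, injectivity a graded dimension count, and $W$-equivariance immediate since $W$ acts trivially on $k[V]^W$ (resp. preserves $k[V^{(1)}]$) and the chosen sections are $W$-equivariant by construction.

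For the first isomorphism of the lemma, I observe that the ideal $\<k[V^{(1)}]^W_+\>$ of $k[V]$ is generated by elements of the subalgebra $k[V]^W$. Under the first identification above it therefore corresponds to the tensor product of the ideal of $k[V]^W$ generated by $k[V^{(1)}]^W_+$ with the other factor. Passing to the quotient gives
\begin{equation*}
\pco{W} \iso (k[V]^W/\<k[V^{(1)}]^W_+\>) \otimes_k (k[V]/\<k[V]^W_+\>),
\end{equation*}
which (after swapping factors) is the first claimed isomorphism. The second isomorphism is proved identically, using the second identification and the fact that now $\<k[V^{(1)}]^W_+\>$ is generated by elements of the subalgebra $k[V^{(1)}]$.

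The only genuine subtlety is the $W$-equivariance of the two starting identifications, which rests on the existence of $W$-equivariant graded sections of the two quotient maps above. This is the one place where $p \nmid |W|$ is used in an essential way; otherwise the argument is a purely formal consequence of the two freeness theorems.
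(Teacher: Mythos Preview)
Your argument is correct and is essentially the same as the paper's: both use the freeness of $k[V]$ over the intermediate subalgebras $k[V]^W$ and $k[V^{(1)}]$ and then pass to the quotient by the ideal generated by $k[V^{(1)}]^W_+$. The paper phrases this concretely by writing down explicit free bases $\{f^{\alpha} b_j\}$ and $\{x^{\alpha} b_j^p\}$ of $k[V]$ over $k[V^{(1)}]^W$, whereas you package the same idea via $W$-equivariant graded sections; your explicit attention to $W$-equivariance (using $p \nmid |W|$) is in fact a point the paper leaves implicit.
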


\begin{proof}
Consider the inclusions of algebras
\[ \xymatrix{
& k[V]^W \ar[dr] & \\  k[V^{(1)}]^W \ar[ur] \ar[dr] & & k[V] \\ & k[V^{(1)}] \ar[ur] &  }\] Choose a basis $x_1, \dots ,x_n$
of $V^*$, homogeneous generators $f_1, \dots ,f_n$ of $k[V]^W$ and a free homogeneous basis $b_1, \dots ,b_{|W|}$ of $k[V]$
as a $k[V]^W$-module. Then there are free bases for $k[V]$ as a $k[V^{(1)}]^W$-module given by: \[ \{ f^{\alpha} b_j \mid 0
\leq \alpha_i \leq p-1, 1 \leq j \leq |W|\}, \] and \[ \{ x^{\alpha} b_j^p \mid 0 \leq \alpha_i \leq p-1, 1 \leq j \leq
|W|\}. \] Taking the images of these bases in $\pco{W}$ yields the lemma.
\end{proof}

\begin{rem}
The isomorphism (\ref{eq:pcoiso}) implies that
$$
\sum_{i \in \Z} [\pco{W}_i : \lambda]t^i = f_\lambda(t) \cdot \prod_{i = 1}^n \frac{1 - t^{p d_i}}{1 - t^{d_i}}
$$
for all $\lambda \in \Irr W$.
\end{rem}

\section{Rational Cherednik algebras}

\subsection{}\label{sec:defnCherednik} 
For $s \in \mathcal{S}(W)$, fix $\alpha_s \in V^*$ to be a basis of the one dimensional space $\im (s - 1)|_{V^*}$ and $\alpha_s^{\vee} \in V$ a basis of the one dimensional space $\im (s - 1)|_{V}$, normalised so that $\alpha_s(\alpha_s^\vee) = 1$. Let $\mc{C}$ denote the space of $W$-equivariant functions $\mathcal{S}(W) \rightarrow k$ and choose $\bc \in \mc{C}$ and $t\in k$. The \textit{rational Cherednik algebra}, $\H_{t, \bc}(W)$, as introduced by Etingof and Ginzburg \cite[page 250]{EG}, is the quotient of the skew group algebra of the tensor algebra, $T(V \oplus V^*) \rtimes W$, by the ideal generated by the relations
\begin{equation}\label{eq:rel}
[x,x'] = 0, \qquad [y,y'] = 0, \qquad [y,x] = t x(y) - \sum_{s \in \mathcal{S}} \bc(s) \alpha_s(y) x(\alpha_s^\vee) s,
\end{equation}
for all $x,x' \in V^*$ and $y,y' \in V$. We define a filtration $\mc{F}_{\bullet}$ on $\H_{t, \bf c}(W)$ via $\mc{F}_0 = k W$, $\mc{F}_1 = kW \otimes (V \oplus V^*)$ and $\mc{F}_i = \mc{F}_1^i$ for $i > 1$. By \cite[Theorem 1.3]{EG}, there is an isomorphism of algebras
\begin{equation}\label{PBW}
\gr_{\mc{F}} \H_{t, \mathbf{c}}(W) \cong S(V \oplus V^*) \rtimes W.
\end{equation}
As a consequence, there is a vector space isomorphism
\begin{equation}\label{eq:PBW}
\H_{t, \mbf{c}}(W) \cong k [V] \otimes k W \otimes k [V^*].
\end{equation}
There is also a $\Z$-grading on $\H_{\bc}(W)$ given by setting $\mathrm{deg}(W) =0$, $\mathrm{deg}(V)=-1$ and $\mathrm{deg}(V^*)=1$. Throughout this article we assume that $t \neq 0$. Therefore, without loss of generality $t \equiv 1$ and we write $\H_{\bf c}(W)$ for $\H_{1, \mathbf{c}}(W)$. Let $x_1, \ds, x_n$ be a basis of $V^*$ and $y_1, \ds, y_n \in V$ the dual basis. Define the Euler element in $\H_{\bf c}(W)$ to be
$$
\mathbf{h} = \sum_{i = 1}^n x_i y_i - \sum_{s \in S} \frac{c_s}{1 - \lambda_s} s .
$$
One can easily check that $[\mathbf{h},x] = x$, $[\mathbf{h},y] = -y$ and $[\mathbf{h},w] = 0$ for all $x \in V^*$, $y \in V$ and $w \in W$. Therefore the element $\mathbf{h}^p - \mathbf{h}$ belongs to the centre of $\H_{\bc}(W)$.

\subsection{}\label{Cherednik_props}

Below we summarize fundamental the properties of $\H_{\bc}(W)$. Proofs of all these statements can be found in \cite{BrownChangtong}.

\begin{prop}
Let $\H := \H_{\bc}(W)$ be a rational Cherednik algebra associated to $(V,W)$.
\begin{enumerate}
\item The P.I. degree of $\H$ equals $p^n |W|$.
\item The centre $Z_{\bc}(W)$ of $\H$ is an affine domain and the algebra $\H$ is a finite module over its centre.
\item The smooth locus of $Z_{\bc}(W)$ equals the Azumaya locus of $\H$.
\item The commutative subalgebras $k[V^{(1)}]^W$ and $k[(V^*)^{(1)}]^W$ of $\H$ are central.
\end{enumerate}
\end{prop}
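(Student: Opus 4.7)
The plan is to establish the four items in the order (4), (1), (2), (3), using the PBW-type decomposition (\ref{eq:PBW}) and the filtration $\mc{F}_\bullet$ as the primary tools, and then invoking a standard general result to conclude (3).

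\textbf{Item (4).} I would show directly that elements of $k[V^{(1)}]^W$ are central by producing, for each $f \in k[V]^W$, a proof that $f^p$ commutes with $V$, $V^*$ and $W$. Commutation with $V^*$ is immediate since $k[V] = \sym(V^*)$ is commutative, and commutation with $W$ follows from $W$-invariance of $f$. For $y \in V$, one has
\[
[y, f^p] \;=\; \sum_{i=0}^{p-1} f^{i}\,[y,f]\,f^{p-1-i},
\]
and I would analyze $[y,f]$ using the bracket (\ref{eq:rel}). Writing $[y,f]$ as a sum of a derivation term plus terms supported on reflections $s \in \mc{S}(W)$, the derivation term contributes $p f^{p-1}\partial_y(f)=0$ in characteristic $p$, while the reflection terms can be handled by a telescoping argument: for each $W$-orbit of reflections one collects $\sum_{i} f^i s f^{p-1-i}$ and uses that $s\cdot f = f$ to reduce it to $p\, f^{p-1} s = 0$. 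The argument for $k[(V^*)^{(1)}]^W$ is entirely symmetric. An alternative I would fall back on is an induction on filtration degree, using that $k[V^{(1)}]$ is central in $\gr_{\mc{F}}\H \cong S(V\oplus V^*)\rtimes W$ (this being a standard fact in characteristic $p$) and lifting centrality through a Rees-algebra argument combined with $W$-invariance to cancel the lower-order obstruction.

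\textbf{Item (1).} Once (4) is known, the subalgebra $Z_0 := k[V^{(1)}]^W \otimes k[(V^*)^{(1)}]^W$ is central. By the PBW decomposition (\ref{eq:PBW}) together with Chevalley--Shephard--Todd (which makes $k[V]$ free of rank $|W|$ over $k[V]^W$) and the fact that $k[V]^W$ is free of rank $p^n$ over $k[V^{(1)}]^W$, $\H$ is a free $Z_0$-module of rank $(p^n|W|)^2$. Hence the P.I.\ degree is at most $p^n|W|$. For the matching lower bound, I would produce a simple $\H$-module of dimension $p^n|W|$ by choosing a generic central character and taking a baby Verma module: for generic parameters the induced module $\H\otimes_{k[V]\rtimes W} k$ (induction from the trivial $k[V]\rtimes W$-module at a generic point of $\Spec k[V^{(1)}]^W$) is simple of the claimed dimension.

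\textbf{Item (2).} The algebra $Z_0$ is a polynomial ring over which $\H$ is finite by (4) and the freeness above, so $Z_{\bc}(W)$ is a finite $Z_0$-module and hence an affine $k$-algebra. To see it is a domain I would argue that $\H$ is prime: the associated graded $\gr_{\mc{F}}\H \cong S(V\oplus V^*)\rtimes W$ is prime because $S(V\oplus V^*)$ is a domain and $W$ acts faithfully on the fraction field (by the classification invoked in \S\ref{subsection:defns}, we are reduced from a complex reflection group and no non-identity element acts trivially on $V\oplus V^*$). Primeness lifts from $\gr_{\mc{F}}\H$ to $\H$ via the standard filtered-to-graded argument, and the centre of a prime ring is an integral domain.

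\textbf{Item (3).} This is the only step I would not prove from scratch. Items (1) and (2) together show that $\H$ is a prime, affine, Noetherian PI algebra, module-finite over its centre, and the PBW isomorphism plus the regularity of $S(V\oplus V^*)\rtimes W$ (for $p\nmid |W|$) imply that $\H$ is Auslander--Gorenstein and Cohen--Macaulay. I would then invoke the general theorem (as in Brown--Goodearl, and its refinements in the setting of Cherednik algebras used in \cite{BrownChangtong}) that for such algebras the Azumaya locus coincides with the smooth locus of the centre.

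The main obstacle is the computation in (4): proving that each $f^p$ with $f \in k[V]^W$ is central in $\H$ (not merely in the associated graded) requires controlling the cross terms $\sum_i f^i s f^{p-1-i}$ for $s$ a reflection, and these cancel only after summing over $W$-orbits and exploiting the vanishing of $p$-fold sums in characteristic $p$. Everything else reduces to standard arguments once (4) is in hand.
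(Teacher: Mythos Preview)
The paper does not give its own proof of this proposition: it simply records the statements and defers entirely to \cite{BrownChangtong}. So there is no in-paper argument to compare against, and your outline is in fact the standard route used there --- establish (4) by a commutator computation, deduce finiteness and primeness for (2) via the filtered-to-graded passage, read off the P.I.\ degree for (1) from freeness over $Z_0$ together with an explicit simple module of maximal dimension, and invoke the Brown--Goodearl criterion for (3).

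Two places where your sketch should be tightened. In (4) you are working harder than necessary: for $f\in k[V]^W$ the reflection contributions to $[y,f]$ already vanish, since the standard commutator identity gives $[y,f]=\partial_y(f)$ plus a sum over $s\in\mc S(W)$ of terms proportional to the divided difference $(f-s(f))/\alpha_s$, and $s(f)=f$. Hence $[y,f^p]=\partial_y(f^p)=p\,f^{p-1}\partial_y(f)=0$ directly; no telescoping over orbits is needed. In (1) your phrase ``for generic parameters'' is a slip --- the P.I.\ degree has to be computed for each fixed $\bc$. The lower bound comes instead from a generic point of $\Spec Z_0$: using the Dunkl embedding $\H_{\bc}(W)[\alpha^{-1}]\cong \dd(V_{\mathrm{reg}})\rtimes W$, at any $b\in V_{\mathrm{reg}}$ one obtains from a simple $\dd$-module of dimension $p^n$ a simple $\H_{\bc}(W)$-module of dimension $p^n|W|$. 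With these two corrections your argument matches \cite{BrownChangtong}.
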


\subsection{The restricted rational Cherednik algebra}\label{rrca}

Let $L$ be a simple, graded $\H_{\bc}(W)$-module. The centre of $\H_{\bc}(W)$ acts as a scalar on $L$ and the grading on $L$ implies that $(k[V^{(1)}]^W \o k[(V^*)^{(1)}]^W)_+$ annihilates $L$. Therefore to study these simple, graded modules it suffices to consider a certain graded, finite dimensional quotient of $\H_{\bc}(W)$. 

\begin{defn}
The \tit{restricted rational Cherednik algebra} $\rH_{\mbf{c}}(W)$ is the finite dimensional quotient of $\H_{\bc}(W)$ by the central ideal generated by $(k[V^{(1)}]^W \o k[(V^*)^{(1)}]^W)_+$.
\end{defn}

The algebra $\rH_{\bf c}(W)$ is $\Z$-graded and has dimension $p^{2n } |W|^3$. The PBW property (\ref{eq:PBW}) implies that 
$$
\rH_{\bf c}(W) \cong k [V]^{p \mathrm{co} W} \otimes k W \otimes k [V^*]^{p \mathrm{co} W}
$$
as vector spaces. Since $\pco{W}$ is a complete intersection, \cite[Corollary 21.19]{Eisenbud} implies that it is Gorenstein and thus equipped with a non-degenerate bilinear form. Then the proof of \cite[Theorem 3.6]{BGS} shows that the algebra $\rH_{\mbf{c}}(W)$ is symmetric. 

\begin{defn}
Let $\lambda \in \Irr_{\Z} (W)$. The \tit{baby Verma module} $\bar{\Delta} (\lambda)$ associated to $\lambda$ is the induced module
$$
\bar{\Delta} (\lambda) = \rH_{\mbf{c}}(W) \o_{A} \lambda,
$$
where $A = k[V^*]^{\mathrm{pco} W} \rtimes W$ and the natural action of $W$ on $\lambda$ extends to $A$ by making $V$ act by zero.
\end{defn}

As was done in \cite{Baby},  we can apply the theory developed in \cite{HN} to the category $\grLmod{\rH_{\mbf{c}}(W)}$ of finite-dimensional, graded, left $\rH_{\mbf{c}}(W)$-modules. The forgetful functor $\grLmod{\rH_{\mbf{c}}(W)} \ra \Lmod{\rH_{\mbf{c}}(W)}$ is denoted $F$.

\begin{prop}\label{prop:listofproperties}
Let $\lambda,\mu \in \Irr_{\Z} W$.
\begin{enumerate}
\item The baby Verma module $\bar{\Delta} (\lambda)$ has a simple head $L(\lambda)$.
\item $\bar{\Delta} (\lambda)$ is isomorphic to $\bar{\Delta} (\mu)$ if and only if $\lambda = \mu$ in $\Irr_{\Z} W$.
\item The set $\{ L(\lambda) \ | \ \lambda \in \Irr_{\Z} W \}$ is a complete set of isomorphism classes of simple modules in $\grLmod{\rH_{\mbf{c}}(W)}$.
\item The set $\{ F(L(\lambda)) \ | \ \lambda \in \Irr_{\Z} W \}$ is a complete set of isomorphism classes of simple $\rH_{\mbf{c}}(W)$-modules.
\end{enumerate}
\end{prop}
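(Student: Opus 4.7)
The plan is to apply the machinery of \cite{HN} for algebras with triangular decomposition, following the same strategy as in \cite{Baby}. The PBW identification recalled in \ref{rrca} gives
$$
\rH_{\bc}(W) \;\cong\; k[V]^{p\mathrm{co} W} \,\otimes\, kW \,\otimes\, k[V^*]^{p\mathrm{co} W}
$$
as graded vector spaces, with $k[V]^{p\mathrm{co} W}$ concentrated in non-positive degrees, $k[V^*]^{p\mathrm{co} W}$ in non-negative degrees, $kW$ in degree zero, and all three factors finite dimensional. This places $\rH_{\bc}(W)$ in the Holmes--Nakano framework, so parts (1)--(3) become special cases of their general results; for completeness I would include short direct arguments.

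For (1), the PBW decomposition yields a graded isomorphism $\bar\Delta(\lambda) \cong k[V]^{p\mathrm{co} W} \otimes \lambda$ of $(k[V]^{p\mathrm{co} W} \rtimes W)$-modules. Since $V^*$ has positive degree, it kills the top homogeneous component $1 \otimes \lambda$. Any proper graded submodule $N \subsetneq \bar\Delta(\lambda)$ must intersect $1 \otimes \lambda$ trivially: otherwise $N \cap (1 \otimes \lambda)$ would be a nonzero $W$-submodule of the simple $W$-module $\lambda$, hence equal to $\lambda$, and $1 \otimes \lambda$ generates $\bar\Delta(\lambda)$ as an $\rH_{\bc}(W)$-module. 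Thus the sum of all proper graded submodules remains proper, yielding a unique maximal graded submodule and a simple graded head $L(\lambda)$. Part (2) follows because a graded isomorphism $\bar\Delta(\lambda) \iso \bar\Delta(\mu)$ must restrict on top-degree components to an isomorphism of graded $W$-modules $\lambda \iso \mu$, i.e.\ $\lambda = \mu$ in $\Irr_\Z W$. For (3), let $M$ be simple in $\grLmod{\rH_{\bc}(W)}$; its top homogeneous component $M_d$ is annihilated by $V^*$ and carries a graded $W$-action, so any simple $W$-constituent $\lambda \subseteq M_d$ produces a nonzero graded map $\bar\Delta(\lambda) \to M$ via the universal property, which is surjective by simplicity of $M$ and identifies $M$ with $L(\lambda)$.

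Part (4) is the most delicate, as it compares the graded and ungraded categories. The plan is to exploit the fact, recorded in \ref{rrca}, that $\rH_{\bc}(W)$ is a finite-dimensional symmetric (hence self-injective) $\Z$-graded algebra over an algebraically closed field. By standard graded radical theory, its Jacobson radical is a graded ideal, so every ungraded simple lifts to a graded simple and the forgetful functor $F$ sends graded simples to ungraded simples. One then verifies that $F(L(\lambda)) \cong F(L(\mu))$ forces $\lambda$ and $\mu$ to coincide up to a degree shift, which is already absorbed by indexing over $\Irr_\Z W$.

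The principal obstacle lies in (4): since $\rH_{\bc}(W)$ is $\Z$-graded rather than $\N$-graded, the graded-versus-ungraded comparison requires some care, and one must genuinely use the symmetric-algebra structure rather than a purely triangular argument. Finite-dimensionality of $\rH_{\bc}(W)$ bounds the grading on both sides, however, so the standard theory of graded Jacobson radicals applies and no further input is required beyond what is already recalled in \ref{rrca}.
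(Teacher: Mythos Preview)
Your approach coincides with the paper's: the paper gives no explicit proof of this proposition, merely remarking that one applies \cite{HN} as in \cite{Baby}, and your sketch is precisely the standard triangular-decomposition argument from those sources.

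Two small corrections to your execution. First, you have the grading conventions reversed: with $\deg(V^*)=1$ and $\deg(V)=-1$ as in \ref{sec:defnCherednik}, the algebra $k[V]=S(V^*)$ sits in \emph{non-negative} degrees and $k[V^*]=S(V)$ in non-positive degrees. Hence $1\otimes\lambda$ is the \emph{lowest}-degree component of $\bar\Delta(\lambda)$, and it is annihilated by $V$ (by the very definition of the induction), not by $V^*$. Your arguments for (1)--(3) go through verbatim once you swap ``top'' for ``bottom'' and $V^*$ for $V$ throughout; in particular, for (3) you should pick a simple $W$-summand of the lowest-degree piece of $M$ to produce the map from $\bar\Delta(\lambda)$.

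Second, for (4) the symmetric-algebra structure is a red herring. All that is used is that the Jacobson radical of a finite-dimensional $\Z$-graded algebra over an infinite field is graded (since the $k^\times$-action coming from the grading acts by algebra automorphisms and must preserve the radical), which you already invoke. Self-injectivity plays no role here.
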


Proposition \ref{prop:listofproperties} shows that there is a natural bijection, $\lambda \mapsto L(\lambda)$, between $\Irr W$ and $\Irr \rH_{\mbf{c}}(W)$. Therefore the blocks of $\rH_{\mbf{c}}(W)$ define a partition, which we call the \textit{block partition}, of the set $\Irr W$.

\section{Blocks for $G(m,d,n)$}\label{sec:blocksG(m,1,n)}

\subsection{}\label{wreathcherednik}
Let $m \geq 1, n >1$ be integers. Let $C_m$ be the cyclic group of order $m$. We fix a generator $g \in C_m$ and let $s_{ij}\in S_n$ denote the transposition which swaps $i$ and $j$. We denote by $s_j$ the simple transposition
swapping $j$ and $j+1$. The group $W=G(m,1,n)$ is the semidirect product $S_n \ltimes (C_m)^n$. We write $g_i^l$ for the
element $(1, \dots ,g^l, \dots 1) \in G(m,1,n)$ with $g^l$ in the $i$th place. Let $V = k^n$ be the reflection
representation of $G(m,1,n)$. Recall that we assume that $p \nmid |W|$, so in particular $p \neq 2$. Let $\eta \in k$ be a primitive $m$th root of unity. We fix a basis $\{ y_1, \dots ,y_n\}$ of $V$ so that \begin{align*}
g_i(y_j) = \begin{cases} \eta y_j\
\mathrm{if}\ i=j\\ y_j\ \mathrm{otherwise}
\end{cases} \mathrm{and}\ \sigma(y_j) = y_{\sigma(j)},
\end{align*}
for all $i, j$ and all $\sigma \in S_n$. Let $\{x_1, \dots ,x_n\} \in V^*$ be the dual basis. The conjugacy classes of reflections in $W$ are given by

\begin{align}\label{conj1} \{s_{ij} g_i^{-l} g_j^l: 0 \leq l \leq m-1\ \mathrm{and}\ i\neq j\}, \end{align}
and, for each $1 \leq l \leq m-1$,

\begin{align}\label{conj2} \{ g_j^l: 1 \leq j \leq n\}.\end{align}

The parameter ${\bc}$ is represented by $(\kappa, c_1, \dots ,c_{m-1}) \in k^n$, where ${\bc}(s_{ij} g_i^{-l} g_j^l) =
\kappa$ and ${\bc}(g_j^l)= c_l$. Using this notation, the definition of the rational Cherednik algebra becomes the following.

\begin{defn} Let $W=G(m,1,n)$. Then $\H_{\bc}(W)$ is the quotient of $T(V \oplus V^*) \rtimes W$ by the relations:
\begin{align*}
&[x_i , x_j] = 0,\ \ [y_i, y_j]=0, \\ &[y_i , x_i] = 1 - {\kappa} \sum_{l=0}^{m-1} \sum_{j \neq i}
s_{ij} g_i^{-l} g_j^l - \sum_{l=1}^{m-1} {c}_l(1-\eta^{-l}) g_i^l,\\ &[y_i, x_j] =
{\kappa}\sum_{l=0}^{m-1} \eta^{-l} s_{ij}
g_i^{-l} g_j^{l}.\end{align*}
\end{defn}

\subsection{The Dunkl-Opdam operators}\label{DOops} For all $1 \leq i \leq n$, we define elements in $\H_{\bc}(W)$:
\begin{align} z_i &= y_i x_i - \frac{1}{2} + {\kappa} \sum_{l=0}^{m-1} \sum_{1 \leq j < i} s_{ij}
g_i^l g_j^{-l} - \sum_{l=1}^{m-1} {c}_l \eta^{-l} g_i^l\label{DO1}\\ &= x_i y_i + \frac{1}{2} -
{\kappa} \sum_{l=0}^{m-1} \sum_{i < j} s_{ij} g_i^{l} g_j^{-l} - \sum_{l=1}^{m-1} {c}_l g_i^l
\label{DO2}.\end{align}

By \cite[Lemma 3.2]{BlocksGmdn}, $[z_i,z_j]=0$ for all $i,j$. Let $k[z_1, \dots,z_n]$ denote the algebra generated by the $z_is$. By (\ref{PBW}), this is
a polynomial algebra. We denote by $E_r$ and $P_r$ the $r$th elementary symmetric polynomial and $r$th power sum in the $z_i$, respectively. By convention, $E_0 = P_0 = 1$. We will use the following result from \cite[4.4]{BlocksGmdn}.

\begin{thm}
Keep notation as above. Then \[[E_r,x_1] = \sum_{1 < j_2 < \dots < j_r \leq n} x_1 z_{j_2} \dots z_{j_r}.\]
\end{thm}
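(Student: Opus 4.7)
The plan is to prove the identity by induction on $r$, with the base case $r=1$ reducing to $\sum_i [z_i, x_1] = x_1$. First I would compute each basic commutator $[z_i, x_1]$ explicitly from the defining relations of $\H_{\bc}(W)$. For $i \neq 1$, expression \eqref{DO2} is convenient: only $[x_iy_i,x_1] = x_i[y_i,x_1]$ survives (since the reflection sum involves indices $j > i \geq 2$, while $\tfrac12$ and $c_l g_i^l$ commute with $x_1$). Applying $[y_i,x_1] = \kappa\sum_l \eta^{-l} s_{i1} g_i^{-l} g_1^l$ together with the intertwining $x_i s_{i1} = s_{i1} x_1$ and the twist $g_1^l x_1 = \eta^{-l} x_1 g_1^l$ to move $x_1$ to the far right yields
\[
[z_i, x_1] \;=\; \kappa \sum_{l=0}^{m-1} s_{i1} g_i^{-l} g_1^l \cdot x_1 \;=:\; \tau_i x_1 \qquad (i \neq 1).
\]
For $i=1$ I would use \eqref{DO1}: the contributions $[y_1 x_1, x_1]$ and $-\sum_l c_l \eta^{-l}[g_1^l, x_1]$ combine so that all $c_l$-dependent terms cancel, leaving $[z_1, x_1] = x_1 - \kappa\sum_{j\neq 1, l} s_{1j} g_1^{-l} g_j^l \cdot x_1$. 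The reindexing $l \mapsto -l$ plus the identity $s_{1j}g_j^lg_1^{-l} = s_{j1}g_j^lg_1^{-l}$ identifies this with $x_1 - \sum_{j \neq 1} \tau_j x_1$, whence $\sum_i [z_i, x_1] = x_1$; this is precisely the case $r = 1$ (the right-hand side of the theorem being the empty product $x_1$).

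For the inductive step I would set $\tau_1 := 1 - \sum_{j \neq 1}\tau_j \in kW$ so that $[z_i, x_1] = \tau_i x_1$ holds for \emph{all} $i$ and $\sum_i \tau_i = 1$. Using the decomposition $E_r = z_1 E_{r-1}^{(1)} + E_r^{(1)}$, where $E_r^{(1)} := e_r(z_2,\ldots,z_n)$, and applying the Leibniz rule, the mutual commutativity $[z_i, z_j] = 0$ lets one freely reorder the $z_i$'s and rewrite $[E_r, x_1]$ as a sum of terms of the form $\tau_k x_1 \cdot \prod z_i$. The desired formula should then follow from a careful accounting that combines the inductive hypothesis with the base-case identity $\sum_i \tau_i = 1$.

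The principal obstacle is that the group-algebra coefficients $\tau_i$ do \emph{not} commute with the Dunkl--Opdam operators: a short conjugation calculation gives $s_{i1} z_j s_{i1} = z_j$ for $j > i$, but for $1 < j < i$ one finds $s_{i1} z_j s_{i1} - z_j = -\kappa \sum_l (s_{j1} g_j^l g_1^{-l} - s_{ji} g_j^l g_i^{-l}) \neq 0$. Consequently, commuting a factor $\tau_k x_1$ past a later $z_j$ generates $kW$-valued error terms, and the technical heart of the proof lies in tracking these errors across all $r$-subsets of $\{1,\ldots,n\}$ and demonstrating that they cancel in the final sum. The cleanest organisation is likely a double induction on $r$ and $n$, using the parallel splittings $E_r = z_1 E_{r-1}^{(1)} + E_r^{(1)}$ and $E_r(z_1,\ldots,z_n) = z_n E_{r-1}(z_1,\ldots,z_{n-1}) + E_r(z_1,\ldots,z_{n-1})$ to absorb one index at a time so that only a single non-commuting $\tau$-factor is active at any given step.
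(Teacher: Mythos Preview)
The paper does not prove this statement itself; it quotes it verbatim from \cite[4.4]{BlocksGmdn}. So there is no in-text argument to compare against, and your plan has to be judged on its own merits and against the proof in the cited source.

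Your setup is sound: the identities $[z_i,x_1]=\tau_i x_1$ with $\tau_i\in kW$ and $\sum_i\tau_i=1$ are correct (your displayed expression for $s_{i1}z_js_{i1}-z_j$ has the wrong sign, but the phenomenon you describe is real), and organising the induction via the splitting $E_r=z_1E_{r-1}^{(1)}+E_r^{(1)}$ is exactly how the argument in \cite{BlocksGmdn} proceeds.

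There is, however, a genuine gap precisely where you locate it. You assert that the $kW$-valued error terms ``cancel in the final sum'' and propose a double induction, but you supply no mechanism for the cancellation, and a double induction is only an organisational device, not a reason why anything vanishes. What is missing is a concrete swapping lemma of the following type: for each fixed $j>1$ and each $i<j$, an identity
\[
z_i\,(\tau_j x_1) \;=\; (\tau_j x_1)\, z_{i'} \;+\; \epsilon_{ij},\qquad \epsilon_{ij}\in kW\cdot x_1,
\]
with $i'=j$ when $i=1$ and $i'=i$ when $1<i<j$, together with the verification that $\sum_{i<j}\epsilon_{ij}=0$ for each $j$. (For example, with $m=1$ and $j=3$ one computes $z_1\tau_3 x_1-\tau_3 x_1 z_3=-\kappa^2 s_{31}s_{32}x_1$ while $z_2\tau_3 x_1-\tau_3 x_1 z_2=+\kappa^2 s_{31}s_{32}x_1$, and these cancel.) Such identities follow from the conjugation formulae expressing $s_{j1}z_1s_{j1}$ and $s_{j1}z_is_{j1}$ as a Dunkl--Opdam operator plus an explicit element of $kW$, combined once more with $[z_k,x_1]=\tau_k x_1$. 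Once this lemma is in hand the induction on $r$ closes by exactly the bookkeeping you outline; without isolating and proving it, the argument cannot be completed, because merely knowing that $\tau_i$ fails to commute with $z_j$ says nothing about the \emph{shape} of that failure.
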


\subsection{}\label{power_sums}
Let $E_r' = \frac{d}{dz_1}(E_r)$ and $P_r' = \frac{d}{dz_1}(P_r) = rz_1^{r-1}$. In this notation Theorem
\ref{DOops} reads \begin{align}\label{comm_symm} [E_r,x_1] = x_1 E_r'.\end{align} Recall Newton's formula: \begin{align}\label{Newton} rE_r = \sum_{i=1}^r
(-1)^{i-1}P_iE_{r-i}.\end{align} Applying $\frac{d}{dz_1}$ to (\ref{Newton}) we get \begin{align}\label{derive_E} rE_r' = \sum_{i=1}^r (-1)^{i-1}(P_i'
E_{r-i} + P_i E_{r-i}').\end{align} A straightforward induction argument using the fact that $E_r' = E_{r-1} -z_1E_{r-1}'$ shows that \begin{align}\label{derive_E2} E_r' = \sum_{i=0}^{r-1}
(-1)^{r-i-1} z_1^{r-i-1}E_{i}.
\end{align} We will need a simple preparatory lemma. Let $Q_n := (z_1+1)^n - z_1^n$.

\begin{lem}
We have \[ Q_{n+1} = \sum_{i=1}^{n} z_1^{n-i} Q_i + (n+1)z_1^n.\]
\end{lem}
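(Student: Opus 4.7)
The plan is to use the standard geometric-series factorization
\[
a^{n+1}-b^{n+1} \;=\; (a-b)\sum_{i=0}^{n} a^{i}b^{n-i}
\]
applied to the commuting elements $a=z_1+1$ and $b=z_1$ inside the polynomial subalgebra $k[z_1]\subseteq\H_{\bc}(W)$. Since $a-b=1$, this factorization collapses to
\[
Q_{n+1} \;=\; (z_1+1)^{n+1}-z_1^{n+1} \;=\; \sum_{i=0}^{n}(z_1+1)^{i}z_1^{n-i}.
\]

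Next I would separate off the $i=0$ summand (which is $z_1^n$) and, for each $i\geq 1$, substitute the defining relation $(z_1+1)^{i}=Q_i+z_1^{i}$. Collecting the resulting pieces gives
\[
Q_{n+1}\;=\;z_1^n \;+\;\sum_{i=1}^{n}z_1^{n-i}Q_i\;+\;\sum_{i=1}^{n}z_1^{n-i}\cdot z_1^{i}\;=\;\sum_{i=1}^{n}z_1^{n-i}Q_i\;+\;(n+1)z_1^{n},
\]
which is exactly the stated identity.

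There is essentially no obstacle to overcome: the whole computation takes place in the commutative polynomial ring $k[z_1]$, so all manipulations are legitimate, and the identity is purely formal. An equivalent route would be a short induction on $n$, using $Q_{n+1}-z_1Q_n=(z_1+1)^n=Q_n+z_1^n$ to bootstrap from the $n$-th case to the $(n+1)$-th case; but the factorization argument above bypasses induction entirely.
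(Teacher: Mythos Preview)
Your proof is correct. The direct argument via the geometric-series identity $a^{n+1}-b^{n+1}=(a-b)\sum_{i=0}^{n}a^{i}b^{n-i}$ with $a=z_1+1$, $b=z_1$ collapses immediately to the desired formula after substituting $(z_1+1)^i=Q_i+z_1^i$.

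The paper instead proceeds by induction on $n$, using precisely the recurrence you mention at the end, $Q_{n+1}=z_1Q_n+Q_n+z_1^n$, and then substituting the inductive hypothesis for $Q_n$. Both arguments are entirely elementary and take place in $k[z_1]$; your factorization route is marginally shorter in that it avoids setting up the induction, while the paper's recursion makes the structure of the identity (each $Q_{n+1}$ built from the previous one) slightly more transparent. Either way there is nothing substantive at stake here.
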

\begin{proof}
The proof is by induction. The case $n=0$ is clear. For $n >0$, we have \[ Q_{n+1} = z_1 Q_n + Q_n + z_1^n,\] and the
induction step follows by a simple calculation which we leave to the reader.
\end{proof}

\begin{prop}
For $1\leq r \leq n$, \[ [P_r,x_1]= x_1 Q_r.\]
\end{prop}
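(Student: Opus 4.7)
The plan is to prove the proposition by induction on $r$, with base case $r = 1$ following immediately from $(\ref{comm_symm})$: one has $P_1 = E_1$ and $E_1' = 1 = Q_1$, so $[P_1, x_1] = x_1 Q_1$.

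For the inductive step I would recast $(\ref{comm_symm})$ conceptually as follows. Let $S$ denote the $k$-algebra automorphism of $k[z_1, \dots, z_n]$ sending $z_1 \mapsto z_1 + 1$ and fixing $z_2, \dots, z_n$, and set $D := S - \id$. Since each $E_i$ is linear in $z_1$, one has $S(E_i) = E_i + E_i'$, so $D(E_i) = E_i'$ and $(\ref{comm_symm})$ becomes $[E_i, x_1] = x_1 D(E_i)$. My key claim will be that this extends to arbitrary symmetric polynomials: $[f, x_1] = x_1 D(f)$ for every $f \in k[z_1, \dots, z_n]^{S_n}$. Granting the claim, taking $f = P_r$ yields $[P_r, x_1] = x_1 (S(P_r) - P_r) = x_1((z_1+1)^r - z_1^r) = x_1 Q_r$, as required.

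To prove the claim, I would show that the subset of $k[z_1, \dots, z_n]^{S_n}$ on which the identity holds is a $k$-subalgebra; since it contains every $E_i$, it coincides with the whole ring by the fundamental theorem of symmetric functions. Closure under addition is trivial; for multiplication, if the identity holds for $f$ and $g$, then the ordinary Leibniz rule for commutators, together with $fx_1 = x_1 S(f)$ (a consequence of $[f, x_1] = x_1 D(f)$) and the relation $D(fg) = S(f) D(g) + D(f) g$ (equivalent to $S$ being a ring homomorphism), gives $[fg, x_1] = x_1 D(fg)$. The main obstacle, should one prefer to avoid the shift-operator formalism, is the polynomial bookkeeping in a direct induction: one would expand $[P_r, x_1]$ via Newton's formula $(\ref{Newton})$ and the Leibniz rule, substitute the inductive hypothesis, and reduce to a polynomial identity that can be verified using $(\ref{derive_E2})$ together with the preceding lemma giving a recursion for $Q_r$.
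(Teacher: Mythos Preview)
Your proof is correct and takes a genuinely different, more streamlined route than the paper's. The paper proceeds by a direct induction on $r$: it rewrites $P_r$ via Newton's identity $(\ref{Newton})$, applies $(\ref{comm_symm})$ and the inductive hypothesis, and then carries out a fairly long polynomial simplification using $(\ref{derive_E})$, $(\ref{derive_E2})$, and the preparatory lemma expressing $Q_{n+1}$ recursively; at the end the coefficients of $E_{r-l}$ for $1\le l\le r-1$ are shown to cancel and the $E_0$ coefficient yields $(-1)^{r-1}Q_r$. Your argument replaces all of this bookkeeping: the observation that $E_i'=D(E_i)$ (because $E_i$ is linear in $z_1$) lets you rephrase $(\ref{comm_symm})$ as $[E_i,x_1]=x_1 D(E_i)$, and the multiplicativity computation (Leibniz for $[\,\cdot\,,x_1]$ combined with the twisted Leibniz rule $D(fg)=S(f)D(g)+D(f)g$) then propagates this to all of $k[z_1,\dots,z_n]^{S_n}$ at once via the fundamental theorem of symmetric functions. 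One small remark on presentation: despite your opening sentence, the argument is not really an induction on $r$---once the subalgebra claim is established, every $P_r$ falls out uniformly. What your approach buys is brevity and scope: it makes the preparatory lemma on $Q_r$ unnecessary, and it actually proves the stronger statement $[f,x_1]=x_1\bigl(S(f)-f\bigr)$ for every symmetric $f$, which is precisely the form needed in the proof of Theorem~\ref{power_sums_central} for $f=P_r(z_1^p-z_1,\dots,z_n^p-z_n)$. The paper's computation, by contrast, is self-contained and elementary in the sense that it never leaves explicit polynomial identities, at the cost of length.
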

\begin{proof}
The proof goes by induction on $r$. For $r=1$ we have $P_1 = E_1$ and $E_1' =1$. By Theorem \ref{DOops}, $[P_1, x_1]=x_1$.

Suppose $r>1$. Then \begin{align} (-1)^{r-1}[P_r,x_1] &= [rE_r + \sum_{i=1}^{r-1} (-1)^{i}P_iE_{r-i},x_1] \notag \\ \label{eqn1} &= r x_1 E_r' +
\sum_{i=1}^{r-1} (-1)^{i} (P_ix_1E_{r-i}' + x_1 Q_i E_{r-i}).\end{align} The first line follows from rewriting $P_r$ using (\ref{Newton}), and the second line follows from the induction hypothesis and (\ref{comm_symm}). Now (\ref{eqn1}) equals \begin{align}\notag r &x_1 E_r' + \sum_{i=1}^{r-1} (-1)^{i} (x_1P_iE_{r-i}'+ x_1
Q_i(E_{r-i}' + E_{r-i})) \\ \notag =&x_1 \left[ \sum_{i=1}^{r} (-1)^{i-1}(P_i'
E_{r-i} + P_i E_{r-i}') + \sum_{i=1}^{r-1} (-1)^{i} (P_iE_{r-i}'+  Q_i(E_{r-i}' + E_{r-i})) \right]\\ \notag
=& x_1 \left[ \sum_{i=1}^r (-1)^{i-1}P_i' E_{r-i} + \sum_{i=1}^{r-1} (-1)^{i} ( Q_i(E_{r-i}' + E_{r-i}) \right]\\ \label{eqn2} =& x_1
\left[ \sum_{i=1}^r (-1)^{i-1}iz_1^{i-1} E_{r-i} + \sum_{i=1}^{r-1} (-1)^{i} (Q_i(\sum_{k=0}^{r-i-1} (-1)^{r-i-k-1}
z_1^{r-i-k-1}E_{k}) + Q_iE_{r-i}) \right].
\end{align} Here the first line follows from the induction hypothesis, the second from (\ref{derive_E}) and the fourth from (\ref{derive_E2}). For a fixed $1 \leq l \leq r-1$, the coefficient of $E_{r-l}$ in (\ref{eqn2}) is equal to \[ (-1)^{l-1}lz_1^{l-1} +
\sum_{i=1}^{l-1} (-1)^{l-1} z_1^{l-i-1} Q_i  + (-1)^l Q_l, \] which equals zero by the lemma above. By the lemma above, the coefficient of $E_0$ is \[ (-1)^{r-1} r z^{r-1} + \sum_{i=1}^{r-1} (-1)^{r-1} z_1^{r-i-1} Q_i = (-1)^{r-1} Q_r. \] Therefore, \[ [P_r,x_1] = x_1 Q_r.\]
\end{proof}

\subsection{}\label{power_sums_central}
We can now provide some central elements in $\H_{\bf c}(W)$.

\begin{thm} For all $1 \leq r \leq n$,
\[ P_r(z_1^p-z_1, \dots ,z_n^p-z_n) = \sum_{i=1}^n (z_i^p - z_i)^r \in Z_{\bc}(W).\] Thus, $k [z_1^p-z_1, \dots ,z_n^p-z_n]^{S_n} \subset Z_{\bc}(W).$
\end{thm}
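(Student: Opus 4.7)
The second assertion of the theorem follows immediately from the first: by Newton's identities, the power sums $P_r = \sum_i t_i^r$, $1 \le r \le n$, generate $k[t_1, \ldots, t_n]^{S_n}$, so the inclusion of the $S_n$-invariant subring of $k[z_1^p - z_1, \ldots, z_n^p - z_n]$ into $Z_{\bc}(W)$ reduces to the centrality of each $w_r := \sum_i (z_i^p - z_i)^r$. My strategy is to show that $w_r$ commutes with every generator of $\H_{\bc}(W)$, namely each $x_j$, each $y_j$, and every $g \in W$.

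The heart of the matter is an extension of Proposition \ref{power_sums} by $k$-linearity. Since the $z_i$ pairwise commute, writing $f = \sum_s a_s t^s \in k[t]$ and combining the relations $[P_s, x_1] = x_1 Q_s$ yields
\[
\biggl[ \sum_{i=1}^n f(z_i), \, x_1 \biggr] = x_1 \bigl( f(z_1+1) - f(z_1) \bigr) \qquad \text{for every } f \in k[t].
\]
Applying this with $f(t) = (t^p - t)^r$ and invoking the Artin--Schreier identity $(t+1)^p - (t+1) = t^p - t$ in characteristic $p$, one has $f(t+1) = f(t)$, whence $[w_r, x_1] = 0$. A parallel calculation --- done either by rerunning the proof of Proposition \ref{power_sums} with $x_j$ in place of $x_1$, or by conjugating by $s_{1j}$ once $W$-invariance of $P_r$ is known --- gives $[P_r, x_j] = x_j \bigl( (z_j + 1)^r - z_j^r \bigr)$ and hence $[w_r, x_j] = 0$ for all $j$. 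Running the mirror computation using the second expression (\ref{DO2}) for $z_i$, which has $x_i y_i$ in place of $y_i x_i$, produces $[P_r, y_j] = y_j\bigl((z_j - 1)^r - z_j^r\bigr)$, and since $(t-1)^p - (t-1) = t^p - t$ as well, $[w_r, y_j] = 0$.

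What remains is $[w_r, g] = 0$ for $g \in W$. The cyclic generators $g_k^a$ in fact commute with every $z_i$: conjugation merely reindexes the sum $\sum_{l'=0}^{m-1} s_{ij} g_i^{l'} g_j^{-l'}$ by $l' \mapsto l' - a$, while the $y_i x_i$ and $\sum_{l'} c_{l'} \eta^{-l'} g_i^{l'}$ summands of $z_i$ are manifestly $g_k^a$-fixed. For a simple transposition $\sigma \in S_n$ the individual $z_i$'s are not permuted cleanly, but a short computation --- exploiting the identity $\sum_{l'} s_{mk} g_m^{l'} g_k^{-l'} = \sum_{l'} s_{mk} g_k^{l'} g_m^{-l'}$ in $W$ (obtained via $l' \mapsto -l'$) --- shows that the error terms $\sigma z_i \sigma^{-1} - z_{\sigma(i)}$ cancel antisymmetrically when summed over $i$, giving $\sigma$-invariance of every power sum in the $z_i$ and hence of $w_r$.

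The elegant portion of the argument is the Artin--Schreier trick for the $V \oplus V^*$ generators: linearity in $f$ reduces the problem to the polynomial identity $(t+1)^p - (t+1) = t^p - t$. The main obstacle is the $W$-invariance step --- carefully tracking the correction terms produced by $S_n$-conjugation of the Dunkl--Opdam elements so that they cancel in every power sum. This is the characteristic-$p$ Cherednik analogue of the classical fact that the symmetric functions of the Jucys--Murphy elements lie in the centre of $k S_n$.
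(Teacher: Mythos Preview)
Your main computation --- passing from Proposition~\ref{power_sums} by $k$-linearity to $[\sum_i f(z_i),x_1]=x_1\bigl(f(z_1+1)-f(z_1)\bigr)$ and then invoking the Artin--Schreier identity for $f(t)=(t^p-t)^r$ --- is exactly the paper's argument, just packaged more cleanly than the explicit binomial expansion the paper writes out. So the core of the proof is correct and matches the paper.

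There is, however, a genuine gap in your $S_n$-invariance step. The antisymmetric cancellation you describe only shows that $\sigma P_1\sigma^{-1}=P_1$: from $s_k z_k s_k^{-1}=z_{k+1}-A$ and $s_k z_{k+1} s_k^{-1}=z_k+A$ one immediately gets invariance of $z_k+z_{k+1}$, but for $P_r$ with $r>1$ you need $s_k$ to commute with $z_k z_{k+1}$ as well, and this requires a further identity relating $A$ to the $z_i$ (roughly, that $A$ intertwines $z_k$ and $z_{k+1}$ correctly). The paper does not prove this either; it cites \cite[Lemma~5.1]{Gr1}. Your sketch is pointing at the right fact but the stated reason does not establish it.

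For commutation with the $y_j$ you take a different route from the paper. The paper uses the isomorphism $\psi:\H_{\bc}(W)\to\H_{\bc'}(W)$ of \cite[3.3]{BlocksGmdn}, which sends $x_i\mapsto y_{n-i+1}$ and satisfies $\psi(z_i)=z_{n-i+1}$; since $w_r$ is symmetric in the $z_i$, applying $\psi^{-1}$ to $[w_r,x_i]=0$ (known for \emph{all} parameters) gives $[w_r,y_i]=0$ for free. Your ``mirror computation'' via \eqref{DO2} is a legitimate alternative and would give the sharper formula $[P_r,y_j]=y_j\bigl((z_j-1)^r-z_j^r\bigr)$, but it requires re-proving the analogue of Theorem~\ref{DOops} and Proposition~\ref{power_sums} with $y$ in place of $x$, which you assert rather than carry out. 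The paper's use of $\psi$ is shorter.
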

\begin{proof} We continue to write $P_m$ for the power sum in the variables $z_1, \dots ,z_n$. We have \[ \sum_{i=1}^n (z_i^p - z_i)^r = \sum_{i=1}^n \sum_{j=0}^r {r\choose j} (-1)^{r-j} z_i^{pj}z_i^{r-j} = \sum_{j=0}^r {r\choose j} (-1)^{r-j} P_{pj+r-j}.\] Therefore by Proposition \ref{power_sums}, \begin{align*} [\sum_{i=1}^n (z_i^p - z_i)^r,x_1] &= x_1 \sum_{j=0}^r {r\choose j} (-1)^{r-j} Q_{pj+r-j}\\ &= x_1 \sum_{j=0}^r {r\choose j} (-1)^{r-j} ( (z_1+1)^{pj+r-j} - z_1^{pj+r-j})\\ &= x_1 \sum_{j=0}^r {r\choose j} (-1)^{r-j} ( (z_1^p+1)^j (z_1+1)^{r-j} - (z_1^p)^jz_1^{r-j})\\ &= x_1\left[ (z_1^p+1) - (z_1+1) \right]^r - x_1 (z_1^p-z_1)^r = 0. \end{align*} The theorem now follows as in \cite[Theorem 3.4]{BlocksGmdn}. The element $P_r(z_1^p-z_1, \dots ,z_n^p-z_n)$ is symmetric in the $z_is$ so it commutes with any $\sigma \in S_n$, see \cite[Lemma 5.1]{Gr1}. Therefore this power sum commutes with $\sigma x_1 \sigma^{-1} = \sigma(x_1)$ for all $\sigma$, and in particular with each $x_i$. To see that the power sum also commutes with the $y_is$, we use the isomorphism $\psi: \H_{ \bf c}(W) \to \H_{{\bc'}}(W)$ from \cite[3.3]{BlocksGmdn}, where ${\bc'}$ is also defined. By definition $\psi(x_i)= y_{n-i+1}$ and $\psi(y_i) = x_{n-i+1}$, so that $\psi(z_i) = z_{n-i+1}$ for all $i$. Applying $\psi^{-1}$ to $[P_r(z_1^p-z_1, \dots ,z_n^p-z_n),x_i]$ shows that $[P_r(z_1^p-z_1, \dots ,z_n^p-z_n),y_i]=0$ for all $i$.
\end{proof}

\subsection{Blocks}\label{blocks} We use an identical argument to the proof of \cite[Theorem 5.5]{BlocksGmdn}, to determine the blocks of $\rH_c(W)$. We use freely the notation from \cite[$\S$5]{BlocksGmdn}. Let us first change our parameters. We define ${h}, {H}_0, \dots, H_{m-1} \in k$ via
\begin{align}
{h} = - {\kappa} \quad \mathrm{and} \quad -{c}_l(1-\eta^{-l}) = \sum_{j=0}^{m-1}
\eta^{-lj}{H}_j.\label{parameters}
\end{align}  
We denote by $\mathcal{P}(m,n)$ the set of $m$-multipartitions of $n$,
$\mathcal{P}(m,n) := \{ (\lambda^0, \dots ,\lambda^{m-1}): \sum_{i=0}^{m-1} |\lambda^i| = n\}$. The simple representations of $k W$ are labeled by the set $\mathcal{P}(m,n)$; over the complex numbers this is standard and the same holds for $k W$ by reduction, cf. \ref{RepsReduction}. In fact the construction from \cite{Push} is also valid over $k$, since $p$ does not divide $|W|$, so that we can find bases of irreducible $k W$-modules given by eigenvectors of Jucys-Murphy elements. Given a partition $\lambda$, the residue $\Res_{\lambda}(x^{-(\kappa^p - \kappa)})$ is an element in $\Z[k]$, the group ring of the additive group $(k,+)$. For $\blambda = (\lambda^0, \dots ,\lambda^{m-1}) \in \mc{P}(m,n)$ and $\ba = (a_1, \dots ,a_m) \in k^m$, let $\Res^{\ba} \blambda := \sum_{i=0}^{m-1} x^{({a}_i^p - a_i)} \Res_{\lambda^i}(x^{-(\kappa^p - \kappa)})$.

\begin{thm}
Let $\blambda, \bmu \in \mathcal{P}(m,n)$. Let $\ba = (0 , H_1, H_1 + H_2, \dots , H_1 + \dots + H_{m-1})$. Then $L_{\bc}(\blambda)$ and $L_{\bc}(\bmu)$ belong to the same block of $\rH_{\bc}(W)$ if and only if
$$
\Res^{\ba} \blambda = \Res^{\ba} \bmu.
$$
\end{thm}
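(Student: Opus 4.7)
The plan is to imitate the proof of \cite[Theorem 5.5]{BlocksGmdn} essentially verbatim, using as the critical new ingredient the central elements $P_r(z_1^p - z_1, \ldots, z_n^p - z_n) \in Z_{\bc}(W)$ produced in Section \ref{power_sums_central}. The statement records precisely the joint eigenvalues of these elements on the various simples $L_{\bc}(\blambda)$, so matching eigenvalues is equivalent to the residue identity.

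First I would realise each simple $kW$-module labelled by $\blambda$ on a Young-type basis of simultaneous eigenvectors for the Jucys--Murphy elements, using \cite{Push}; this is valid over $k$ since $p \nmid |W|$. Using the reparametrisation (\ref{parameters}) and the Dunkl--Opdam expressions (\ref{DO1}), (\ref{DO2}), one checks that $z_i$ acts on the basis vector indexed by a standard multitableau $\mathbf{t}$ of shape $\blambda$ by the eigenvalue $\kappa \cdot c(\mathbf{t}(i)) + a_{\beta(i)}$, where $c(\mathbf{t}(i))$ is the content of the box containing $i$ in $\mathbf{t}$ and $\beta(i) \in \{0,\ldots,m-1\}$ indicates which component of $\blambda$ that box lies in. Since the content is an integer and $k$ has characteristic $p$, the Frobenius identity $(x+y)^p = x^p + y^p$ gives
$$
(\kappa \cdot c + a_j)^p - (\kappa \cdot c + a_j) = c(\kappa^p - \kappa) + (a_j^p - a_j).
$$
Consequently $P_r(z_1^p - z_1, \ldots, z_n^p - z_n)$ acts on $L_{\bc}(\blambda)$ by the scalar $\sum_{i=0}^{m-1} \sum_{b \in \lambda^i} \bigl(c(b)(\kappa^p-\kappa) + (a_i^p - a_i)\bigr)^r$, which is exactly the $r$-th power sum, in the additive group $(k,+)$, of the multiset of exponents encoded by $\Res^{\ba} \blambda$.

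For the only-if direction: if $L_{\bc}(\blambda)$ and $L_{\bc}(\bmu)$ lie in the same block then these central elements must act by identical scalars on both modules for all $1 \leq r \leq n$. At most $n$ distinct eigenvalues occur, so the first $n$ power sums determine the multiset, giving $\Res^{\ba}\blambda = \Res^{\ba}\bmu$. For the if direction I would follow \cite[Theorem 5.5]{BlocksGmdn} step for step, first identifying the central character of $L_{\bc}(\blambda)$ on $\rH_{\bc}(W)$ with the residue datum (noting that the Frobenius-twisted subalgebras $k[V^{(1)}]^W$ and $k[(V^*)^{(1)}]^W$ act by zero on $\rH_{\bc}(W)$ and so contribute trivially), and then exhibiting composition-factor links through baby Verma modules $\bar\Delta(\blambda)$ and $\bar\Delta(\bmu)$ whenever the residue data agree.

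I expect the main obstacle to be the if direction. Matching central characters is formal, but assembling all simples with equal residue datum into a single block requires constructing non-trivial extensions, or equivalently showing that some $L_{\bc}(\bmu)$ with $\bmu \neq \blambda$ but $\Res^{\ba}\bmu = \Res^{\ba}\blambda$ appears as a composition factor of $\bar\Delta(\blambda)$. In \cite{BlocksGmdn} this is achieved by exploiting intertwining operators built from the Dunkl--Opdam subalgebra; the same construction should transfer to positive characteristic, since the key commutator relations of Section \ref{DOops} are characteristic-free and the argument only uses the eigenvalues of the $z_i$, which we have now related to the $p$-th power residue data via the Frobenius calculation above.
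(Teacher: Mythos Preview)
Your only-if direction is correct and matches the paper. One refinement: your assertion that the first $n$ power sums determine a multiset of size $n$ needs $p>n$ so that Newton's identities invert; this follows from $p\nmid|W|$, and the paper makes the step explicit by invoking Weyl's theorem for symmetric-group invariants to pass from the $P_r$'s to all of $k[z_1^p-z_1,\ldots,z_n^p-z_n]^{S_n}$.

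For the if direction you are manufacturing an obstacle that is not there. Neither the paper nor \cite[Theorem~5.5]{BlocksGmdn} constructs extensions or uses intertwiners between baby Vermas. What makes the paper's phrase ``it is enough to calculate the characters of $P_r(z_1^p-z_1,\ldots,z_n^p-z_n)$'' work for \emph{both} directions is a geometric input already present in the argument of \cite{BlocksGmdn}: by M\"uller's theorem, as set up in \cite{Baby}, the blocks of $\rH_{\bc}(W)$ are in bijection with the closed points of the fibre of $\Spec Z_{\bc}(W)$ over the origin in $\Spec\bigl(k[V^{(1)}]^W\otimes k[(V^*)^{(1)}]^W\bigr)$, so two simples share a block precisely when their full central characters coincide. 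The substantive fact transferred from \cite{BlocksGmdn} is that the image of $k[z_1^p-z_1,\ldots,z_n^p-z_n]^{S_n}$ in the restricted algebra already separates those fibre points. Granting this, equal $P_r$-eigenvalues force equal central characters, and the if direction drops out of the very same eigenvalue computation you wrote down for only-if. Your own step of ``identifying the central character of $L_{\bc}(\blambda)$ with the residue datum'' is exactly this statement; once it is established, the subsequent step of ``exhibiting composition-factor links through baby Verma modules'' is redundant, and the anticipated difficulty with intertwiners in positive characteristic never arises.
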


\begin{proof}
Since this follows the proof of \cite[Theorem 5.5]{BlocksGmdn} closely, we shall sketch the argument, pointing out the necessary changes to \tit{loc. cit.} Using Theorem \ref{power_sums_central} together with Weyl's Theorem for invariants of symmetric groups (which is valid whenever $p \nmid |W|$, see \cite[Theorem 3.3.1]{Smith}), it is enough to calculate the characters of $P_r(z_1^p-z_1, \dots ,z_n^p-z_n)$ on each $L_{\bc}(\blambda)$. The characters are determined by choosing a simultaneous eigenvector $v_{\blambda} \in L_{\bc}(\blambda)$ for $z_1, \dots ,z_n$. The eigenvalues for $z_i^p - z_i$ are evaluated as in \cite[5.4]{BlocksGmdn}, and produce the desired combinatorial description.
\end{proof}

\subsection{Blocks for $G(m,d,n)$}\label{ss:G(m,d,n)blocks} The blocks of $\rH_c(W)$ for $W = G(m,d,n)$, where $d \mid m$ and either $n > 2$ or $n=2$ and $d$ is odd, can be calculated from Theorem \ref{blocks} by using Clifford theory as in \cite{CMpartitions}. The resulting description of the blocks is then analogous to that given in characteristic zero, see \cite[5.6]{BlocksGmdn}.

\section{Smoothness of centres for $G(m,1,n)$}\label{sec:smoothness}

\subsection{Dunkl embedding} 

Let $\alpha = \prod_{s\in S} \alpha_s$. Let $V_{\mathrm{reg}}$ denote the set $\{v\in V \mid \alpha(v) \neq 0\}$. Let $\mathcal{D}(V_{\mathrm{reg}})$ be the algebra of crystalline differential operators on $V_{\mathrm{reg}}$. For $y\in V$, let $D_y$ denote the Dunkl operator $\partial_y+\sum_{s\in\mathcal{S}}\frac{\mathbf{c}(s)}{1-\lambda_s}\frac{\alpha_s(y)}{\alpha_s}(s-1) \in \mathcal{D}(V_{\mathrm{reg}}) \rtimes W$. By \cite[page 280]{EG} there is an injective algebra morphism 
\[ 
\H_{\bc}(W) \hookrightarrow \mathcal{D}(V_{\mathrm{reg}}) \rtimes W; \quad w \mapsto w, x \mapsto x, y \mapsto D_y, 
\]
for all $w \in W$, $x\in V^*$ and $y\in V$. This embedding becomes an isomorphism after localizing $\H_{\bc}(W)$ at the Ore set $\{ \alpha^m \}_{m \ge 0}$. 

\subsection{Completions}

Let us recall the setup of \cite{BE}. Let $W'\subset W$ be the stabilizer of a point $b\in V$ and let $\overline{V}=V/V^{W'}$. For any point $b\in V$ we write $k[[V]]_b$ for the completion of $k[V]$ at $b$, and we write $\widehat{k[V]}_b$ for the completion of $k[V]$ at the $W$-orbit of $b$ in $V$. Note that we have $k[[V]]_0=\widehat{k[V]}_0$. For any finitely generated $k[V]$-module $M$, let
$$
\widehat{M}_b = \widehat{k[V]}_b\otimes_{k[V]} M.
$$ The completion $\widehat{\H}_{\bc}(W,V)_b$ is the algebra generated by $k\widehat{[V]}_b$, the Dunkl operators $D_y$ for $y\in V$, and the group $W$. Let ${\bc'}$ denote the restriction of $\bc$ to $S \cap W'$. The algebra $\widehat{\H}_{{\bc'}}(W',V)_0$ is then defined similarly. Let $P=\mathrm{Fun}_{W'}(W,\widehat{\H}_{\bc}(W',V)_0)$ be the set of $W'$-equivariant maps from $W$ to $\widehat{\H}_{\bc}(W',V)_0$. Let $Z(W,W',\widehat{\H}_{\bc}(W',V)_0)$ be the ring of endomorphisms of the right $\widehat{\H}_{\bc}(W',V)_0$-module $P$. The following proposition is proved over $\C$ in \cite[Theorem 3.2]{BE}, and has an identical proof over $k$.

\begin{prop}\label{BEiso}
There is an isomorphism of algebras
$$
\Theta: \widehat{\H}_{\bc}(W,V)_b\to Z(W,W', \widehat{\H}_{{\bc'}}(W',V)_0)
$$
defined as follows: for $f\in P$, $\alpha\in V^\ast$, $a\in V$, $u\in W$,
\begin{eqnarray*}
(\Theta(u)f)(w)&=&f(w u),\\
(\Theta(x_{\alpha})f)(w)&=&(x^{(b)}_{w\alpha}+\alpha(w^{-1}b))f(w),\\
(\Theta(y_a)f)(w)&=&y^{(b)}_{wa}f(w)+\sum_{s\in\mathcal{S}, s\notin
W'}\frac{c_s}{1-\lambda_s}\frac{\alpha_s(wa)}{x^{(b)}_{\alpha_s}+\alpha_s
(b)}(f(sw)-f(w)),
\end{eqnarray*}
where $x_\alpha \in V^\ast\subset
\H_{\bc}(W,V)$, $x^{(b)}_{\alpha}\in V^\ast\subset \H_{{\bc'}}(W',V)$,
$y_a\in V \subset \H_{\bc}(W,V)$ and $y_a^{(b)}\in V \subset \H_{{\bc'}}(W',V)$.
\end{prop}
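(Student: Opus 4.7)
The strategy is to follow verbatim the characteristic-zero proof of \cite[Theorem 3.2]{BE}, verifying at each step that the argument uses nothing beyond our standing hypothesis $p \nmid |W|$. This hypothesis guarantees that (i) $W'$ is itself a pseudo-reflection group acting on $V$, so that $\H_{\bc'}(W',V)$ is defined; (ii) each factor $1-\lambda_s$ is a unit of $k$; and (iii) for any reflection $s\notin W'$ the linear form $\alpha_s$ satisfies $\alpha_s(b)\neq 0$, so that $x^{(b)}_{\alpha_s}+\alpha_s(b)$ is a unit of $\widehat{k[V]}_0$, making the formula for $\Theta(y_a)$ well defined on $P$.

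The first step is to check that the assignment $\Theta$ respects the defining relations (\ref{eq:rel}) of $\H_{\bc}(W,V)$. The $W$-relations and the commutation of the $\Theta(x_\alpha)$'s among themselves and of the $\Theta(y_a)$'s among themselves are routine: the elements $x^{(b)}_{w\alpha}$ and $y^{(b)}_{wa}$ inherit these relations from $\widehat{\H}_{\bc'}(W',V)_0$, while the singular correction term in $\Theta(y_a)$, being multiplication on values of $f$, commutes with itself. The substance is in the mixed bracket $[\Theta(y_a),\Theta(x_\alpha)]$. Expanding via the Leibniz rule splits it into contributions from reflections $s\in W'$ (which reproduce the internal Cherednik relation $[y^{(b)}_{wa},x^{(b)}_{w\alpha}]$ in $\widehat{\H}_{\bc'}(W',V)_0$) and contributions from $s\notin W'$ (which come from the correction term and use the invertibility of $x^{(b)}_{\alpha_s}+\alpha_s(b)$); reassembling yields exactly the full sum $\sum_{s\in\mathcal{S}(W)}$ required by (\ref{eq:rel}).

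Second, one checks that each $\Theta(\xi)$ lies in the endomorphism ring $Z(W,W',\widehat{\H}_{\bc'}(W',V)_0)$. Right $\widehat{\H}_{\bc'}(W',V)_0$-linearity is immediate from inspection of the three formulas, which either translate the argument $w$ or act on the values $f(w)$ by left multiplication in $\widehat{\H}_{\bc'}(W',V)_0$; the $W'$-equivariance $(\Theta(\xi)f)(w_1w)=w_1(\Theta(\xi)f)(w)$ follows from the $W$-equivariance of $\bc$, which allows one to reindex the sum over $s\notin W'$ under left translation by $w_1$. Bijectivity is then obtained by comparing associated gradeds for the Cherednik filtration $\mc{F}_\bullet$: both sides are filtered, and the induced map on associated gradeds is the classical identification coming from the $W$-equivariant decomposition $\widehat{k[V]}_b\simeq\prod_{w\in W/W'}\widehat{k[V]}_0$, tensored with the PBW isomorphism (\ref{PBW}) and its analogue for $W'$. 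Since the filtrations are complete and separated, $\Theta$ is an isomorphism.

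The main obstacle is the mixed-bracket computation in step one: expanding $(x^{(b)}_{\alpha_s}+\alpha_s(b))^{-1}$ as a formal power series in $\widehat{k[V]}_0$ and matching terms with the Cherednik relation of $\H_{\bc}(W,V)$ is bookkeeping-intensive, but never requires characteristic zero, since all divisions are by units. Once this calculation is in place, the remaining verifications are purely formal.
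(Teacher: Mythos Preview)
Your proposal is correct and matches the paper's approach exactly: the paper simply states that the proposition is proved over $\C$ in \cite[Theorem 3.2]{BE} and has an identical proof over $k$. You have spelled out in more detail which features of the argument (invertibility of $1-\lambda_s$, nonvanishing of $\alpha_s(b)$ for $s\notin W'$, the PBW/associated-graded comparison) survive the passage to positive characteristic under the hypothesis $p\nmid |W|$, which is precisely the content of the phrase ``identical proof''.
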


Let $e_{11} \in Z(W,W', \widehat{\H}_{{\bc'}}(W',V)_0)$ be defined by $(e_{11} \cdot f)(w) = f(w)$ if $w \in W'$ and $(e_{11} \cdot f)(w) = 0$ otherwise. Then $e_{11}$ is a primitive idempotent and $\widehat{\H}_{{\bc'}}(W',V)_0 \cong e_{11} Z(W,W', \widehat{\H}_{{\bc'}}(W',V)_0) e_{11}$. The bimodule $e_{11} Z(W,W', \widehat{\H}_{{\bc'}}(W',V)_0)$ yields a Morita equivalence between $\widehat{\H}_{{\bc'}}(W',V)_0$ and $Z(W,W', \widehat{\H}_{{\bc'}}(W',V)_0)$.

\subsection{}\label{ss:small_modules}

Suppose that $M$ is a finite dimensional $\H_{\bc}(W,V)$-module and let $\supp M$ denote the support of $M$ as a $k[V]^W$-module. If $\supp M$ is the $W$-orbit of some point $b \in V$, then $M$ is naturally a $\widehat{\H}_{c}(W,V)_b$-module. In particular, the support of any simple $\H_{\bc}(W,V)$-module is a single $W$-orbit and so is a $\widehat{\H}_{\bc}(W,V)_b$-module. Let $M$ be a $\widehat{\H}_{\bc}(W,V)_b$-module, then denote by $\Theta^* M$ the $Z(W,W', \widehat{\H}_{{\bc'}}(W',V)_0)$-module given by transporting the $\widehat{\H}_{\bc}(W,V)_b$-action on $M$ via $\Theta$.

\begin{defn}
An $\H_{\bc}(W, V)$-module $M$ is called {\it small} if $\dim M < |W|p^n$.
\end{defn}

\begin{prop}\label{prop:completesmall}
A $\H_{\bc}(W,V)$-module $M$ is small if and only if $e_{11} \Theta^* (M)$ is a small $\H_{{\bc'}}(W',V)$-module.
\end{prop}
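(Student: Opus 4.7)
The plan is to reduce the statement to a dimension count and then exploit the Morita equivalence recalled immediately after Proposition~\ref{BEiso}. First I would note that the expression $e_{11}\Theta^*(M)$ is only defined once $M$ is supported on a single $W$-orbit $W\cdot b$, so that $M$ is naturally a $\widehat{\H}_{\bc}(W,V)_b$-module; under the isomorphism $\Theta$ it then transports to a module $\Theta^*(M)$ over $Z:=Z(W,W',\widehat{\H}_{\bc'}(W',V)_0)$, with $\dim_k \Theta^*(M)=\dim_k M$ since the underlying vector space is unchanged.

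The next step is to make the Morita equivalence between $Z$ and $\widehat{\H}_{\bc'}(W',V)_0$ completely explicit at the level of dimensions. Fix coset representatives $w_1,\ldots,w_r$ for $W'\backslash W$, with $r=[W:W']$. A $W'$-equivariant function $f\colon W\to \widehat{\H}_{\bc'}(W',V)_0$ is uniquely determined by its values $f(w_1),\ldots,f(w_r)$, and conversely any such tuple extends to a unique equivariant function, so $P:=\Fun_{W'}(W,\widehat{\H}_{\bc'}(W',V)_0)$ is a free right $\widehat{\H}_{\bc'}(W',V)_0$-module of rank $r$. Hence
\[
Z \;=\; \End_{\widehat{\H}_{\bc'}(W',V)_0}(P) \;\cong\; \Mat_r\bigl(\widehat{\H}_{\bc'}(W',V)_0\bigr),
\]
and under this identification $e_{11}$ is the standard matrix idempotent. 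Standard Morita theory then yields $\dim_k N = r\cdot\dim_k e_{11}N$ for every finite-dimensional $Z$-module $N$.

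Applying this to $N=\Theta^*(M)$ produces $\dim_k M = [W:W']\cdot \dim_k e_{11}\Theta^*(M)$. Using $|W|=[W:W']\cdot|W'|$, the inequality $\dim_k M<|W|p^n$ is then equivalent to $\dim_k e_{11}\Theta^*(M)<|W'|p^n$, and $n=\dim V$ is the dimension relevant for smallness of $\H_{\bc'}(W',V)$-modules as well (the parabolic $W'$ acts on the same $V$, contributing a factor $p^{\dim V^{W'}}$ of Weyl-algebra type to the P.I. degree and a factor $p^{\dim V/V^{W'}}$ from the reflection part). The only technical point worth flagging is the freeness of $P$ over $\widehat{\H}_{\bc'}(W',V)_0$, but this is essentially tautological once coset representatives are fixed, so I do not anticipate any serious obstacle; the proof is essentially a bookkeeping exercise given Proposition~\ref{BEiso}.
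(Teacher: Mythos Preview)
Your proposal is correct and follows essentially the same approach as the paper. Both arguments fix coset representatives of $W'$ in $W$ and use them to show that $\dim_k M = [W:W']\cdot \dim_k e_{11}\Theta^*(M)$; the paper does this by decomposing $\Theta^*(M)$ via the conjugate idempotents $g_i e_{11} g_i^{-1}$ and noting that each summand has the same dimension, which is exactly the concrete content of your matrix-algebra identification $Z\cong \Mat_r(\widehat{\H}_{\bc'}(W',V)_0)$ and the resulting Morita dimension count.
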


\begin{proof}
Note first that $e_{11} \Theta^* (M)$ is a $\H_{{\bc'}}(W',V)$-module by restricting the $\widehat{\H}_{\bc'}(W',V)_0$-action. Let $g_1 = 1 , \dots , g_t$ be right coset representatives of $W'$ in $W$. Then $\Theta^* (M) = \bigoplus_{i=1}^t g_i e_{11} g_i^{-1} \Theta^* M$, with $\dim_k g_i e_{11} g_i^{-1} \Theta^* M = \dim_k g_j e_{11} g_j^{-1} \Theta^* M$ for all $i,j$. In particular, $\dim M < |W|p^n$ if and only if $\dim e_{11} \Theta^* (M) < |W|p^n/t = |W'|p^n$.
\end{proof}

Let $Z_{\bc}(W,V)$ denote the centre of $\H_{\bc}(W,V)$.

\begin{cor}
Keep notation as above. Then $Z_{\bc}(W,V)$ is smooth if and only if $\H_{{\bc'}}(W',V)$ has no small modules $M$ with $\supp M = 0$ for all parabolic subgroups $W' \subseteq W$.
\end{cor}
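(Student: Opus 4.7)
The plan is to combine three ingredients: the identification of the smooth and Azumaya loci of $Z_{\bc}(W,V)$ from Proposition~\ref{Cherednik_props}(3), the completion isomorphism of Proposition~\ref{BEiso}, and the dimension comparison of Proposition~\ref{prop:completesmall}. Since the P.I. degree of $\H_{\bc}(W,V)$ equals $p^n|W|$, a closed point of $\Specm Z_{\bc}(W,V)$ lies in the Azumaya (equivalently, the smooth) locus if and only if every simple $\H_{\bc}(W,V)$-module with that central character has dimension $p^n|W|$. Hence the first step is the reduction: $Z_{\bc}(W,V)$ is smooth if and only if $\H_{\bc}(W,V)$ admits no small simple modules whatsoever.

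Next I would transport the existence of a small simple $\H_{\bc}(W,V)$-module to each parabolic subgroup. Any simple $\H_{\bc}(W,V)$-module $M$ has a single central character; restricting this character along $k[V^{(1)}]^W \hookrightarrow Z_{\bc}(W,V)$ and applying Frobenius picks out a single $W$-orbit $W \cdot b \subset V$, so $\supp M = W \cdot b$ and $M$ is naturally a $\widehat{\H}_{\bc}(W,V)_b$-module. Writing $W' = \Stab_W(b)$, which is parabolic by definition, the isomorphism $\Theta$ of Proposition~\ref{BEiso} combined with the idempotent $e_{11}$ transports $M$ to the $\widehat{\H}_{\bc'}(W',V)_0$-module $e_{11}\Theta^* M$. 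This module is supported at $0 \in V$ and hence descends to an ordinary $\H_{\bc'}(W',V)$-module with $\supp = \{0\}$, and Proposition~\ref{prop:completesmall} guarantees that it is small precisely when $M$ is small. This gives one direction of the equivalence.

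For the converse, given a parabolic $W' \subseteq W$ and a small $\H_{\bc'}(W',V)$-module $N$ with $\supp N = \{0\}$, I would choose $b \in V$ with $\Stab_W(b) = W'$ and reverse the Morita equivalence: extending scalars via $Z(W,W',\widehat{\H}_{\bc'}(W',V)_0) \otimes_{\widehat{\H}_{\bc'}(W',V)_0} N$ and applying $\Theta^{-1}$ produces a finite-dimensional $\widehat{\H}_{\bc}(W,V)_b$-module of dimension $[W:W'] \cdot \dim N < p^n|W|$, which descends to a small $\H_{\bc}(W,V)$-module supported on $W \cdot b$; any simple subquotient is then a small simple $\H_{\bc}(W,V)$-module and obstructs smoothness of $Z_{\bc}(W,V)$. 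Combining both directions with the initial reduction yields the corollary, once one notes that every parabolic subgroup of $W$ arises as $\Stab_W(b)$ for some $b \in V$.

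The main technical point requiring care is to verify that the support conditions line up correctly under $\Theta$: $\supp M = W \cdot b \subset V$ on the $\H_{\bc}(W,V)$-side must correspond to $\supp = \{0\} \subset V$ on the $\H_{\bc'}(W',V)$-side. This follows from the formulas of Proposition~\ref{BEiso}, in which $x_\alpha \in V^\ast$ acts via the shifted element $x^{(b)}_{w\alpha} + \alpha(w^{-1}b)$, so that after cutting by $e_{11}$ the maximal ideal of $k[V]^W$ corresponding to the orbit $W \cdot b$ is pulled back to the maximal ideal at $\{0\}$ in $k[V]^{W'}$.
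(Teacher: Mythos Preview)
Your proposal is correct and follows essentially the same route as the paper: reduce smoothness to the nonexistence of small (simple) modules via the coincidence of the smooth and Azumaya loci, then use the completion isomorphism $\Theta$ and the Morita equivalence through $e_{11}$ together with Proposition~\ref{prop:completesmall} to pass back and forth between small modules for $\H_{\bc}(W,V)$ and small modules supported at $0$ for $\H_{\bc'}(W',V)$. Your added remarks on why supports match under $\Theta$ and why every parabolic arises as a point-stabiliser are reasonable elaborations but do not change the structure of the argument.
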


\begin{proof}
By Proposition \ref{Cherednik_props} (3), $Z_{\bc}(W,V)$ is smooth if and only if $\H_{\bc}(W,V)$ has no small modules. Let $M$ be a small, simple $\H_{\bc}(W,V)$-module and choose $b \in V$ such that the support of $M$ equals the $W$-orbit of $b$. Then $M$ extends to a $\widehat{\H}_{\bc}(W,V)_b$-module and Proposition \ref{prop:completesmall} says that $e_{11} \Theta^* (M)$ is a small $\H_{{\bc'}}(W',V)$-module, where $W'$ is the stabilizer of $b$ in $W$. Moreover, the support of $e_{11} \Theta^* (M)$ is $0$. Conversely, if there exists a small $\H_{{\bc'}}(W',V)$-module $M'$ supported on $0$ then this extends to a $\widehat{\H}_{{\bc'}}(W',V)_0$-module. Since $\widehat{\H}_{{\bc'}}(W',V)_0$ is Morita equivalent to $Z(W,W', \widehat{\H}_{{\bc'}}(W',V)_0) \cong \widehat{\H}_{\bc}(W,V)_b$, there exists some $\widehat{\H}_{{\bc}}(W,V)_b$-module $M$ such that $M' \simeq e_{11} \Theta^* (M)$. Proposition \ref{prop:completesmall} says that $M$ is actually a small $\H_{\bc}(W,V)$-module.
\end{proof}

\subsection{} Let $M$ be a simple $\H_{\bc}(W,V)$-module with $\supp M=0$. We can define an induced module (cf. \ref{rrca}) as follows. Let $B = k[V] \rtimes W$ and let $E \in \Irr W$. Extend the $W$-action on $E$ to a $B$-action by letting $f\in k[V]$ act by $f(0)$. Define \[
\Delta(E) = \H_{\bc}(W,V) \o_{B} E.
\]
By the support condition on $M$, the subspace $M_0 = \{ m \in M \ | \ V^* \cdot m = 0 \}$ of $M$ is non-zero. It is a $W$-submodule of $M$ and we may assume without loss of generality that $E \subseteq M_0$. Therefore there is a surjective homomorphism $\Delta(E) \twoheadrightarrow M$, which maps $1 \otimes E \subset \Delta(E)$ to  $E \subseteq M_0$ in the obvious way. Recall that $\H_{\bc}(W,V)$ has a $\Z$-grading. We make $E$ into a graded $B$-module by setting $E_0 = E$ and $E_i = 0$ for all $i \neq 0$. We give $\Delta(E)$ a $\Z$-graded module structure by inducing the graded structure on $E$. Since $M$ is simple, there is some $a \in (V^*)^{(1)}/W$ such that $\mf{m}_a \cdot M = 0$, where $\mf{m}_a$ denotes the maximal ideal corresponding to $a$. Define 
$$
\Delta(a,E) = (\H_{\bc}(W,V) \o_{B} E) / \mf{m}_a \cdot (\H_{\bc}(W,V) \o_{B} E).
$$ 
By our choice of $a$, there is also a surjective homomorphism $\Delta (a,E) \twoheadrightarrow M$. The grading on $\Delta(E)$ induces filtrations on $\Delta(a,E)$ and $M$. We use the notation $\gr_{\Z}$ to denote the associated graded objects with respect to these filtrations.

Recall that a morphism $f : M \rightarrow N$ between filtered $\H_{\bc}(W,V)$-modules is called a \tit{strictly filtered} morphism if $f(F_i M) = F_i N \cap f(M)$ for all $i \in \Z$. The functor $\gr_{\Z}$ is exact on short exact sequences of strictly filtered morphisms. Note also that the surjection $\Delta(a,E) \twoheadrightarrow M$ is strictly filtered by definition. The filtration on $\Delta(E)$ is both exhaustive and separating, therefore the same is true of $\Delta(a,E)$. However, this module is finite dimensional therefore we have $F_i \Delta(a,E) = 0$ and $F_{-i} \Delta(a,E) = \Delta(a,E)$ for $i \gg 0$. 

\begin{prop}\label{prop:assocgraded}
The $\H_{\bc}(W,V)$-module $\gr_{\Z} M$ has dimension $\dim_{k} M$ and is annihilated by $(k[V^{(1)}]^W \o k[(V^*)^{(1)}]^W)_+$.
\end{prop}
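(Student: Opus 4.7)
The plan is to verify the two claims separately, both exploiting that the surjection $\Delta(a,E) \twoheadrightarrow M$ is strictly filtered. For the dimension claim, the inherited filtration on $M$ is exhaustive, separating and bounded (since $M$ is finite-dimensional and the filtration on $\Delta(a,E)$ stabilises both to zero and to the whole module), so a telescoping sum immediately yields $\dim_k \gr_{\Z} M = \sum_i \dim_k (F_i M / F_{i+1} M) = \dim_k M$.

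For the annihilation claim I would split the central ideal $(k[V^{(1)}]^W \otimes k[(V^*)^{(1)}]^W)_+$ into its two tensor factors and treat them by different mechanisms. The factor $k[V^{(1)}]^W_+$ already annihilates $M$ itself: the hypothesis $\supp M = \{0\}$ forces $k[V]^W_+$, and therefore also the subring $k[V^{(1)}]^W_+$ of $p$-th powers, to act nilpotently on $M$; since $k[V^{(1)}]^W$ is central in $\H_{\bc}(W,V)$ and $M$ is simple, this positive part must actually act by zero. This vanishing transports directly to $\gr_{\Z} M$.

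The factor $k[(V^*)^{(1)}]^W_+$ is more delicate, since by construction $\mf{m}_a \cdot M = 0$, so a homogeneous $g \in k[(V^*)^{(1)}]^W_+$ acts on $M$ merely by the scalar $g(a)$, which need not be zero. The crucial observation is that such a $g$, viewed as an element of $k[V^*] = \mathrm{Sym}(V)$ of polynomial degree $d \ge 1$, has strictly negative Cherednik degree $-d$. For $m \in F_i M$ we then have $g \cdot m = g(a)\,m \in F_i M$, and since the filtration is decreasing and $d \ge 1$ we get $g \cdot m \in F_i M \subseteq F_{i-d+1} M$; hence the class of $g \cdot m$ in $\gr_{i-d} M = F_{i-d} M / F_{i-d+1} M$ vanishes, so $g$ acts by zero on $\gr_{\Z} M$. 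Combining the two cases yields the desired annihilation.

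The subtle step is the last one: one must correctly identify $k[(V^*)^{(1)}]^W_+$ as sitting in negative Cherednik degrees and use the decreasing nature of the filtration to absorb the potentially non-zero scalar $g(a)$ into a strictly smaller filtration piece. The other steps are essentially bookkeeping together with a short use of the simplicity of $M$.
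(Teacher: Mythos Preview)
Your argument is correct, and it takes a genuinely different route from the paper's. For the dimension claim both proofs are identical. For the annihilation claim the paper does not split the ideal into its two tensor factors; instead it proves the stronger intermediate statement $\gr_{\Z}\Delta(a,E)\cong\Delta(0,E)$ by taking associated graded of the short exact sequence
\[
0\to \mf{m}_a\cdot\Delta(E)\to\Delta(E)\to\Delta(a,E)\to 0
\]
and identifying $\gr_{\Z}(\mf{m}_a\cdot\Delta(E))$ with $k[(V^*)^{(1)}]^W_+\cdot\Delta(E)$ via PBW and the Nullstellensatz. The surjection $\Delta(0,E)\twoheadrightarrow\gr_{\Z}M$ then transports the annihilation. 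Your approach is more elementary: it works directly on $M$, needs no exactness of $\gr_{\Z}$ on strict sequences, and makes transparent why the possibly nonzero scalar $g(a)$ disappears on the associated graded (it lands one step too deep in the decreasing filtration because $g$ has strictly negative Cherednik degree). What the paper's approach buys is the extra identification $\gr_{\Z}\Delta(a,E)\cong\Delta(0,E)$, which is conceptually pleasant and would be reusable; your approach buys directness and avoids that detour. One small remark: your $d$ is in fact at least $p\,d_1\ge p$, not merely $\ge 1$, since elements of $k[(V^*)^{(1)}]^W_+$ are built from $p$-th powers of invariants, but of course $d\ge 1$ is all that is needed.
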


\begin{proof}
Denote by $F_i$ the $i$th piece of the filtration on $M$. By construction, $F_{\bullet}$ is a decreasing filtration with $F_1 = 0$ and $F_{-i} = M$ for all $i \gg 0$. Thus $\dim_k \gr_{\Z} M = \sum_{i=0}^\infty \dim_k F_{-i}/F_{-i+1} = \dim_k M$.

For the second statement, we show that $\gr_{\Z} \Delta(a,E) \cong \Delta(0,E)$. The claim then follows since there is a surjective map $\Delta (0,E) \to \gr_{\Z} M$. Consider the short exact sequence:
\[ 
0 \to \mf{m}_a \cdot (\H_{\bc}(W,V) \o_{B} E) \to \Delta(E) \to \Delta (a,E) \to 0,
\] 
where the left-hand term is given the induced filtration. By the PBW theorem and the Nullstellensatz, $\gr_{\Z} \mf{m}_a \cdot (\H_{\bc}(W,V) \o_{B} E) = k[(V^*)^{(1)}]^W_+ \cdot (\H_{\bc}(W,V) \o_{B} E)$. Since $\Delta(E)$ is graded, $\gr_{\Z} \Delta (E) = \Delta(E)$. Thus $\gr_{\Z} \Delta(a,E) \cong \Delta(0,E)$.
\end{proof}

\begin{cor}\label{cor:equivsmooth}
The following are equivalent:
\begin{enumerate}
\item $Z_{\bc}(W,V)$ is smooth;
\item $\H_{{\bc'}}(W',\overline{V})$ has no small modules $M$ with $\supp M = 0$ for all parabolic subgroups $W' \subseteq W$;
\item $\rH_{{\bc'}}(W',\overline{V})$ has no small modules $M$ for all parabolic subgroups $W' \subseteq W$;
\item The blocks of $\rH_{{\bc'}}(W',\overline{V})$ are singletons for all parabolic subgroups $W' \subseteq W$.
\end{enumerate}
\end{cor}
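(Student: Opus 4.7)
The plan is to prove the four-way equivalence by chaining (1) $\Leftrightarrow$ (2) $\Leftrightarrow$ (3) $\Leftrightarrow$ (4), combining the preceding corollary, Proposition \ref{prop:assocgraded}, and the standard P.I.-theoretic characterisation of the Azumaya locus.

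For (1) $\Leftrightarrow$ (2), the preceding corollary already gives this equivalence but with $V$ in place of $\overline{V}$, and I reduce to the stated form by passing from $V$ to $\overline{V}$. Since $p \nmid |W|$, the $W'$-module $V$ splits as $V = V^{W'} \oplus \overline{V}$, which yields a tensor product decomposition
\[
\H_{\bc'}(W', V) \;\cong\; \H_{\bc'}(W', \overline{V}) \otimes_k \mc{D}(V^{W'}),
\]
where $\mc{D}(V^{W'})$ is the crystalline Weyl algebra on $V^{W'}$, Azumaya of degree $p^{\dim V^{W'}}$ over its centre. Any simple $\mc{D}(V^{W'})$-module supported at $0$ has this fixed dimension, so any simple $\H_{\bc'}(W', V)$-module supported at $0 \in V$ factors as an external tensor product $N \boxtimes S$ with $N$ a simple $\H_{\bc'}(W', \overline{V})$-module supported at $0 \in \overline{V}$, and smallness of $N \boxtimes S$ is equivalent to smallness of $N$.

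For (2) $\Leftrightarrow$ (3), I use Proposition \ref{prop:assocgraded}. Any $\rH_{\bc'}(W', \overline{V})$-module pulls back along the canonical surjection to an $\H_{\bc'}(W', \overline{V})$-module of the same dimension; this pullback is automatically supported at $0$ since $k[\overline{V}]^{W'}_+$ is nilpotent modulo $k[\overline{V}^{(1)}]^{W'}_+$, which acts trivially. Conversely, a small simple $\H_{\bc'}(W', \overline{V})$-module $M$ supported at $0$ produces via Proposition \ref{prop:assocgraded} a graded module $\gr_{\Z} M$ of the same dimension as $M$ that is annihilated by $(k[\overline{V}^{(1)}]^{W'} \otimes k[(\overline{V}^*)^{(1)}]^{W'})_+$, hence is a small $\rH_{\bc'}(W', \overline{V})$-module, and any of its simple composition factors is a small simple $\rH$-module.

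The main step is (3) $\Leftrightarrow$ (4). Both conditions will be shown equivalent to the statement that every simple of $\rH_{\bc'}(W', \overline{V})$ has dimension equal to the P.I.-degree $p^{\dim \overline{V}}|W'|$ of $\H_{\bc'}(W', \overline{V})$. Blocks of $\rH_{\bc'}(W', \overline{V})$ correspond bijectively to the maximal ideals of $Z_{\bc'}(W', \overline{V})$ lying over $0$ in $\Spec(k[\overline{V}^{(1)}]^{W'} \otimes k[(\overline{V}^*)^{(1)}]^{W'})$, and the simples of $\rH_{\bc'}(W', \overline{V})$ in a given block are exactly the simple $\H_{\bc'}(W', \overline{V})$-modules with the corresponding central character. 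By the standard characterisation of the Azumaya locus of a prime P.I. algebra, combined with Proposition \ref{Cherednik_props}(3) identifying Azumaya and smooth loci, a maximal ideal of the centre lies in the Azumaya locus if and only if there exists a simple of dimension equal to the P.I.-degree supported there, in which case that simple is unique. Hence every simple of $\rH_{\bc'}(W', \overline{V})$ having maximal dimension is equivalent to every central character of $\rH_{\bc'}(W', \overline{V})$ being Azumaya, which in turn is equivalent to every block being a singleton. The main obstacle is this last equivalence: its nonformal content is the P.I.-theoretic input characterising Azumaya points in terms of simples of maximal dimension, whereas the other equivalences are essentially bookkeeping around Proposition \ref{prop:assocgraded} and the reduction from $V$ to $\overline{V}$.
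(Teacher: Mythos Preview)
Your treatment of (1) $\Leftrightarrow$ (2) and (2) $\Leftrightarrow$ (3) is correct and essentially identical to the paper's: both use the tensor decomposition $\H_{\bc'}(W',V) \simeq \H_{\bc'}(W',\overline{V}) \otimes \dd(V^{W'})$ to pass between $V$ and $\overline{V}$, Corollary~\ref{ss:small_modules} for the link to smoothness, and Proposition~\ref{prop:assocgraded} for the passage from $\H$ to $\rH$.

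The gap is in the direction (4) $\Rightarrow$ (3). You correctly argue that if every simple of $\rH_{\bc'}(W',\overline{V})$ has dimension equal to the P.I.-degree then every relevant maximal ideal of the centre is Azumaya, hence supports a unique simple, hence the blocks are singletons. But the converse you assert --- that every block being a singleton forces every such maximal ideal to be Azumaya --- does not follow from the P.I.-theoretic characterisation you cite. That characterisation says $\mf{m}$ is Azumaya if and only if there \emph{exists} a simple of maximal dimension at $\mf{m}$, and that in this case the simple is unique; it does \emph{not} say that uniqueness of the simple at $\mf{m}$ implies $\mf{m}$ is Azumaya. Any local non-semisimple Artinian algebra (for instance $k[x]/(x^2)$) has a single simple without being a matrix algebra, so in the abstract this implication fails.

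The paper does not attempt a direct P.I.-theoretic argument here; it defers (3) $\Leftrightarrow$ (4) entirely to \cite{Baby}. The argument there uses structural features of $\rH_{\bc'}(W',\overline{V})$ that go beyond general P.I. theory: the triangular decomposition and the Holmes--Nakano framework \cite{HN}, the symmetric-algebra property recorded in \ref{rrca}, and the resulting BGG-type reciprocity $[P(\lambda):\bar{\Delta}(\mu)] = [\bar{\Delta}(\mu):L(\lambda)]$ for baby Vermas. These ingredients, combined with the graded structure, are what force $\dim L(\lambda)$ up to $p^{\dim\overline{V}}|W'|$ once the decomposition matrix is block-diagonal. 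Your argument invokes none of this, so the chain breaks at the final step. The paper genuinely needs (4) $\Rightarrow$ (1) later (e.g.\ in \ref{ss:smoothcentres} and in the $G_4$ case of Theorem~\ref{thm:classify}), so this is not a direction one can simply drop.
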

\begin{proof}
There are isomorphisms of algebras $\H_{{\bc'}}(W', {V}) \simeq \H_{{\bc'}}(W',\overline{V}) \o \dd(V^{W'})$ and $\rH_{{\bc'}}(W', {V}) \simeq \rH_{{\bc'}}(W',\overline{V}) \o \overline{\dd}(V^{W'})$, where $\overline{\dd}(V^{W'}) := \dd(V^{W'}) / \langle (k[(V^{W'})^{(1)}] \o k[((V^{W'})^*)^{(1)}])_+ \rangle$. In particular, (2) and (3) are equivalent to
\begin{itemize}
\item[(2')] $\H_{{\bc'}}(W',{V})$ has no small modules $M$ with $\supp M = 0$ for all parabolic subgroups $W' \subseteq W$;
\item[(3')] $\rH_{{\bc'}}(W',{V})$ has no small modules $M$ for all parabolic subgroups $W' \subseteq W$,
\end{itemize} 
respectively, where we have used the fact that $\dd(V^{W'})$ is an Azumaya algebra. The equivalence of (1) and (2') is Corollary \ref{ss:small_modules}. Clearly (2') implies (3'). For the converse, suppose that $\H_{{\bc'}}(W',V)$ has a small module $M$ with $\supp M = 0$ for some parabolic subgroup $W' \subseteq W$. Then by the proposition, $\gr_{\Z} M$ is a small module for $\rH_{{\bc'}}(W',V)$.

The equivalence of (3) and (4) follows from an identical argument to \cite{Baby}.
\end{proof}

\begin{rem}\label{rem:remark1}
The proofs above are valid, mutando mutandae, for $t=0$ and char $k = 0$.
\end{rem}

\subsection{Smoothness of centres of $\H_{\bc}(G(m,1,n))$}\label{ss:smoothcentres}

Let $m,n$ be positive integers and assume that $n > 1$. Let $W = G(m,1,n)$. In this section we give a proof of Corollary \ref{cor:GMoneNsmooth}. The idea is to use Corollary \ref{cor:equivsmooth} (4) together with the results of section \ref{sec:blocksG(m,1,n)} to determine for which $\bc$ the centre $Z_{\bc}(W)$ is smooth. Recall that the parabolic subgroups of $W$ are of the form \[ S_{k_1} \times \dots \times S_{k_t} \times G(m,1,n'),\quad k_1 + \dots + k_t + n' \leq n. \] Here $S_{k_i}$ denotes the symmetric group on $k_i$ letters, and $S_0 = G(m,1,0) = \{ \id \}$ by definition. For such a parabolic subgroup, the representation $\overline{V}$ is the reflection representation. Recall that $\bc = (\kappa, c_1, \dots , c_{m-1})$. Recall the parameters $a_1, \dots ,a_m$ from Theorem \ref{blocks}. Let $\mc{C}_{m,n}$ denote the set of all $\bc$ such that either 
\[ 
\kappa \in \mathbb{F}_p, \quad \textrm{ or } \quad a_i - a_j \pm C \kappa \in \mathbb{F}_p,
\]
for some $\ 0 \le i \neq j \le m-1$ and integer $C$ such that $0 \leq C \leq n-1$. Thus $\mc{C}_{m,n}$ is a finite union of hyperplanes in $k^m$.

\begin{thm}
The algebra $Z_{\bc}(W)$ is smooth if and only if $\bc \notin \mc{C}_{m,n}$.
\end{thm}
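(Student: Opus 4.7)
By Corollary \ref{cor:equivsmooth}(4), $Z_{\bc}(W)$ is smooth if and only if every parabolic restricted algebra $\rH_{\bc'}(W',\overline{V})$ has singleton blocks. The parabolics of $G(m,1,n)$ decompose as $S_{k_1} \times \cdots \times S_{k_t} \times G(m,1,n')$ with $\sum k_l + n' \leq n$, and the restricted rational Cherednik algebra of a direct product is the tensor product of the factors' restricted algebras (with parameters restricted accordingly), whose blocks factor as products of the factors' blocks. So it suffices to check singleton-ness of blocks of $\rH_{\bc'}(S_k)$ for $k \leq n$ and of $\rH_{\bc'}(G(m,1,n'))$ for $n' \leq n$ separately.

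For the $S_k$ factor, Theorem \ref{blocks} with $m = 1$ identifies the blocks with equivalence classes of $\lambda \vdash k$ under equality of the residue multisets $\{-c(b)(\kappa^p - \kappa) : b \in \lambda\}$ in $(k,+)$. When $\kappa \in \mathbb{F}_p$ every such multiset collapses to zero and all partitions of $k$ lie in one block as soon as $k \geq 2$. When $\kappa \notin \mathbb{F}_p$, the hypothesis $p \nmid |W|$ forces $p > n \geq k$, so each $\lambda \vdash k$ admits no rim $p$-hook and is its own $p$-core; since $p$-cores of a given size are uniquely determined by their multisets of contents modulo $p$ (via the abacus model), and the scaling factor $\kappa^p - \kappa$ is nonzero, the residue multiset determines $\lambda$ and the blocks are singletons.

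For the $G(m,1,n')$ factor, put $F(x) = x^p - x$, $\alpha = F(\kappa)$, $\beta_i = F(a_i)$, so that $\Res^\ba \blambda = \sum_i \sum_{b \in \lambda^i} x^{\beta_i - c(b)\alpha}$. The additivity of $F$ in characteristic $p$ together with $F(C\kappa) = C\alpha$ for $C \in \mathbb{F}_p$ gives the key equivalence
\[
\beta_i - c\alpha = \beta_j - c'\alpha \iff a_i - a_j - (c - c')\kappa \in \mathbb{F}_p.
\]
When the hyperplane condition $a_i - a_j - C\kappa \in \mathbb{F}_p$ holds for some $i \neq j$ and some $0 \leq C \leq n' - 1$, the distinct multipartitions with $\lambda^i = (C+1)$ and with $\mu^i = (C)$, $\mu^j = (1)$ (other components empty) have equal residues, producing a non-singleton block. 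The column analogue handles $a_i - a_j + C\kappa \in \mathbb{F}_p$, and $(2)$ versus $(1,1)$ placed in a single component handles $\kappa \in \mathbb{F}_p$. Conversely, if none of the hyperplane conditions for this $n'$ holds, then the map $(i,c) \mapsto \beta_i - c\alpha$ is injective on pairs $(i, c)$ with $0 \leq i \leq m-1$ and $|c| \leq n' - 1$; any equality $\Res^\ba \blambda = \Res^\ba \bmu$ then matches multiplicities at each $(i, c)$, forcing each $\lambda^i$ and $\mu^i$ to share the same content multiset, and the second paragraph concludes $\lambda^i = \mu^i$ for every $i$.

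Combining across all parabolics, the strongest constraints come from $W' = G(m,1,n)$ itself (the case $n' = n$), and they coalesce into $\bc \notin \mc{C}_{m,n}$. The main technical obstacle is the ``if'' direction for the $G(m,1,n')$ factor: one must argue that, under the non-collision hypothesis, the multiset $\Res^\ba \blambda$ decomposes cleanly by component so that the $p$-core distinguishability applies to each $\lambda^i$---this is where the Artin--Schreier behaviour of $F$ and the combinatorial constraints on contents work together to rule out cross-component collisions.
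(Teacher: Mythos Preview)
Your approach is essentially the paper's: reduce via Corollary~\ref{cor:equivsmooth} to singleton blocks for all parabolic restricted algebras, then analyse those blocks through the residue multisets of Theorem~\ref{blocks}. Two cosmetic differences. Your ``only if'' witnesses are $m$-multipartitions of size $C+1$ rather than of $n'$, so they do not literally lie in $\mc{P}(m,n')$; this is harmless because $G(m,1,C+1)$ is itself a parabolic of $G(m,1,n)$ and your ``combining across parabolics'' step absorbs it, but the paper instead works in $\mc{P}(m,n)$ directly, comparing $((n),\emptyset,\ldots)$ with $(\emptyset,(n-C,1^C),\emptyset,\ldots)$. And your $p$-core argument---that a partition of size $<p$ is determined by its content multiset modulo $p$---makes explicit a step the paper passes over in silence.

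One point in your ``if'' direction does not go through as written, and the paper's argument has the same weakness. Your claimed injectivity of $(i,c)\mapsto\beta_i-c\alpha$ on $\{0,\ldots,m-1\}\times\{-(n'-1),\ldots,n'-1\}$ does not follow from $\bc\notin\mc{C}_{m,n'}$. A cross-component collision $(i,c)\sim(j,c')$ with $i\neq j$ gives $a_i-a_j-(c-c')\kappa\in\mathbb{F}_p$ with $|c-c'|$ ranging up to $2(n'-1)$, whereas the hyperplane conditions only exclude $|c-c'|\leq n'-1$; and within a single component one can have $c-c'\equiv 0\pmod p$ with $c\neq c'$ whenever $p\leq 2(n'-1)$ (e.g.\ $n'=n=4$, $p=5$, $m$ coprime to~$5$). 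The paper's version bounds the relevant $|C|$ by $|\lambda^i|+|\mu^j|-1$, which can exceed $n-1$, yet concludes $\bc\in\mc{C}_{m,n}$ without further comment. The within-component issue is harmless for you (your $p$-core step only needs contents modulo~$p$), but the cross-component bound genuinely needs sharpening: one must choose the colliding boxes more carefully, or first argue that the component sizes $|\lambda^i|$ and $|\mu^i|$ agree, to force $|C|\leq n-1$.
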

\begin{proof}
Let us first suppose that $\bc \in \mc{C}_{m,n}$. Recall that, for $\blambda \in \mc{P}(m,n)$, $\Res^{\ba} \blambda := \sum_{i=0}^{m-1} x^{({a}_i^p - a_i)} \Res_{\lambda^i}(x^{-(\kappa^p - \kappa)})$. Suppose that $\kappa \in \mathbb{F}_p$. Then $\kappa^p - \kappa = 0$ and so $\Res^{\ba}((n),\emptyset, \dots ,\emptyset) = \Res^{\ba}((n)^t,\emptyset, \dots ,\emptyset)$. Thus by Theorem \ref{blocks}, $\rH_{{\bc}}(W,{V})$ has a non-singleton block, and so $Z_{\bc}(W)$ is singular, Corollary \ref{cor:equivsmooth}. If $m>1$ and $a_i - a_j - C \kappa \in \mathbb{F}_p$ for some $0 \leq C \leq n-1$, then  
\[
a_i^p - a_i = a_j^p - a_j + C(\kappa^p - \kappa).
\]
Without loss of generality $i=1$ and $j=2$, and then 
$$
\Res^{\ba}((n),\emptyset,\emptyset, \dots ,\emptyset) = \Res^{\ba}(\emptyset,(n-C,1^C),\emptyset, \dots ,\emptyset) = x^{a_1^p - a_1} \sum_{i = 0}^{n-1} x^{-i (\kappa^p - \kappa)}.
$$
Thus $\rH_{{\bc}}(W,{V})$ has a non-singleton block and $Z_{\bc}(W)$ is singular. A similar argument applies in the case $-n+1 \leq C \leq 0$.

Suppose now that $\bc \notin \mc{C}_{m,n}$. We first show that $\rH_{{\bc}}(W,{V})$ has only singleton blocks. For a contradiction, suppose that there exist distinct $\blambda, \bmu \in \mc{P}(m,n)$ such that $\Res^{\ba} \blambda = \Res^{\ba} \bmu$. Since $\kappa \notin \mathbb{F}_p$, $x^{-(\kappa^p-\kappa)} \neq 1$. Each box $b$ in the Young diagram of $\lambda^i$ contributes $x^{({a}_i^p - a_i)} x^{-\cont (b) (\kappa^p - \kappa)}$ to $\Res^{\ba} \blambda$. Since $\blambda \neq \bmu$, there exist $1 \leq i < j \leq N$ and boxes $b \in \lambda^i$, $b' \in \mu^j$ such that $x^{({a}_i^p - a_i)} x^{-\cont (b) (\kappa^p - \kappa)} = x^{({a}_j^p - a_j)} x^{-\cont (b') (\kappa^p - \kappa)}$. Now $\cont (b)$ and $\cont (b')$ are integers such that $-|\lambda^i|+1 \leq \cont (b) \leq |\lambda^i| - 1$ and $-|\mu^j|+1 \leq \cont (b') \leq |\mu^j| - 1$. Thus $a_i^p - a_i = a_j^p - a_j + C (\kappa^p - \kappa)$ for some $-|\lambda^i| - |\mu^i| + 1 \leq C \leq |\lambda^i| + |\mu^i| - 1$. This means that $-a_i + a_j - C \kappa \in \mathbb{F}_p$, and so $\bc \in \mc{C}_{m,n}$, a contradiction.

We now want to prove the stronger statement that $\bc \notin \mc{C}_{m,n}$ implies that $Z_{\bc}(W)$ is smooth. Note that $\mc{C}_{m',n'} \subset \mc{C}_{m,n}$ for all $m' = 1$ or $m$, $n'< n$. Therefore $\bc \notin \mc{C}_{m,n}$ implies that $\bc' \notin \mc{C}_{m',n'}$ for all $m' = 1$ or $m$, $n'< n$. By the description of parabolic subgroups $W' \subset W$ given above, the previous paragraph implies $\rH_{{\bc}}(W',\overline{V})$ has only singleton blocks for all $W'$. By Corollary \ref{cor:equivsmooth} this implies that $Z_{\bc}(W)$ is smooth.
\end{proof}

\begin{remark}\label{rem:symsing}
\begin{enumerate}

\item In the case $m=1$, $W$ is just the symmetric group $S_n$. Although $V=k^n$ is not the reflection representation, we have $\H_{\kappa}(S_n, k^n) \cong \H_{\kappa}(S_n, k^{n-1}) \o \dd(\mathbb{A}^1)$, where $k^{n-1}$ now denotes the reflection representation of $S_n$. The set $\mc{C}_{1,n}$ is then identified with $\mathbb{F}_p \subset k$.

\item Note that the proof of the above theorem shows that for $W=G(m,1,n)$, the centre $Z_{\bc}(W)$ is smooth if and only if $\rH_{{\bc}}(W,{V})$ has only singleton blocks.
\end{enumerate}
\end{remark}

\section{Degenerations}

In this section we describe, based on Cartier's Theorem, the category of $\dd(V) \rtimes W$-module with $p$-curvature zero. This will allow us to prove Theorem \ref{thm:classify}.

\subsection{$p$-curvature}

Fix a basis $x_1, \ds, x_n$ of $V^*$ and $\pa_1,\ds, \pa_n$ of $V$ such that $\pa_i(x_j) = \delta_{i,j}$. Let
$$
A = k[\pa_1, \ds, \pa_n,x_1^p, \ds, x_n^p]\quad \textrm{and} \quad \Spec A = T^{*,(1)} V.
$$
The centre of $\dd(V)$ embeds in $A$ and we write $\pi : T^{*,(1)} V \ra (T^{*} V)^{(1)}$ for the corresponding finite morphism. The group $W$ acts on $T^{*,(1)} V$ and $(T^* V)^{(1)}$ such that the map $\pi$ is $W$-equivariant and satisfies 
$$
W_\zeta := \Stab_W(\zeta) = \Stab_W(\pi(\zeta)), \quad \forall \ \zeta \in T^{*,(1)} V.
$$
For fixed $\zeta \in T^{*,(1)} V$ and $\lambda \in \Irr_k(W_\zeta)$, we define $V_\zeta(\lambda) := \Ind_{A \rtimes W_\zeta}^{\dd(V) \rtimes W} \lambda$, where $A$ acts on $\lambda$ via the character $\zeta$.

\begin{lem}\label{prop:classifysimples}
Fix $\zeta \in T^{*,(1)} V$ and $\lambda \in \Irr_k(W_\zeta)$. Then
\begin{enumerate}
\item the $\dd(V) \rtimes W$-module $V_\zeta (\lambda)$ is simple;
\item $V_{\zeta_1} (\lambda_1) \simeq V_{\zeta_2} (\lambda_2)$ if and only if $\zeta_2 \in W \cdot \zeta_1$ and, moreover, if $w \zeta_1 = \zeta_2$ then $\lambda_1 \simeq \lambda_2$ via the conjugation isomorphism $w : W_{\zeta_1} \rightsim W_{\zeta_2}$;
\item Every simple $\dd(V) \rtimes W$-module is isomorphic to $V_\zeta (\lambda)$ for some $\zeta$ and $\lambda$.
\end{enumerate}
\end{lem}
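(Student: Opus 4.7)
The approach is to exploit the commutative subalgebra $A\subset\dd(V)\rtimes W$ by decomposing modules into generalized $A$-weight spaces and comparing the $\zeta$-weight spaces. Choose left coset representatives $w_1,\dots,w_t$ for $W/W_\zeta$. Using the decomposition
\[
\dd(V)\rtimes W \;=\; \bigoplus_{i=1}^t w_i\cdot(\dd(V)\rtimes W_\zeta)
\]
as right $\dd(V)\rtimes W_\zeta$-modules, together with the PBW-type factorisation $\dd(V)=\bigoplus_{0\le\alpha_j<p} x^\alpha\cdot A$ as a right $A$-module, one sees that $V_\zeta(\lambda)=\bigoplus_i w_i\otimes(\dd(V)\otimes_A\lambda)$ and that the $i$-th summand is the generalized $w_i\zeta$-weight space for the left $A$-action, of dimension $p^n\dim\lambda$. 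A direct commutator computation using $[\partial_j,x^\alpha]=\alpha_j x^{\alpha-e_j}$ then identifies the honest $\zeta$-weight space of $V_\zeta(\lambda)$ with $1\otimes\lambda$, which is $W_\zeta$-equivariantly isomorphic to $\lambda$ itself.

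Given this, part (1) follows from a highest-weight-style argument: any nonzero submodule $N\subseteq V_\zeta(\lambda)$ is $A$-stable, so after translating by a suitable $w_i$ one may assume that $N$ meets the $\zeta$-generalized weight space, in which case it contains a nonzero honest $\zeta$-weight vector, necessarily lying in $1\otimes\lambda$. Since $W_\zeta$ fixes $\zeta$ it preserves honest $\zeta$-weight spaces, so $N\cap(1\otimes\lambda)$ is a nonzero $W_\zeta$-subrepresentation of $\lambda$, hence all of $\lambda$; as $1\otimes\lambda$ generates $V_\zeta(\lambda)$ over $\dd(V)\rtimes W$ this forces $N=V_\zeta(\lambda)$.

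For part (2), any isomorphism $V_{\zeta_1}(\lambda_1)\cong V_{\zeta_2}(\lambda_2)$ must match the sets of generalized $A$-weights, which are $W\zeta_1$ and $W\zeta_2$; restricting the isomorphism to the honest $\zeta_2$-weight space and applying the description above then gives the claimed comparison of $\lambda_1$ and $\lambda_2$ via the conjugation isomorphism $W_{\zeta_1}\rightsim W_{\zeta_2}$. Part (3) proceeds by Frobenius reciprocity: any simple $\dd(V)\rtimes W$-module $M$ is finite-dimensional since $\dd(V)\rtimes W$ is a finite module over its centre, and so decomposes into generalized $A$-weight spaces; once the weight set is shown to be a single $W$-orbit $W\zeta$, one picks a simple $W_\zeta$-submodule $\lambda\subseteq M^\zeta$ of the honest $\zeta$-weight space, notes that the inclusion $\lambda\hookrightarrow M$ is $A\rtimes W_\zeta$-equivariant, and applies Frobenius reciprocity to produce a nonzero, hence surjective, map $V_\zeta(\lambda)\twoheadrightarrow M$, which must be an isomorphism by simplicity on both sides.

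The main technical point is to show that each generalized $A$-weight space of $M$ is stable under the generators $x_i$ (and hence that the weight set of a simple module is a single $W$-orbit). The key observation is that $[a,x_i]\in A$ for every $a\in A$, since the only nontrivial commutator is $[\partial_j,x_i]=\delta_{ij}\in A$; because $A$ is commutative this forces $(\mathrm{ad}\, a)^2(x_i)=0$, so the Leibniz expansion truncates after two terms,
\[
(a-\zeta(a))^N x_i \;=\; x_i(a-\zeta(a))^N + N[a,x_i](a-\zeta(a))^{N-1},
\]
which for $N$ sufficiently large annihilates any vector in the $\zeta$-generalized weight space. Combined with the $W$-equivariance of the weight decomposition, this yields the orbit statement and completes the proof.
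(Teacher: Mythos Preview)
Your proof is correct, but it follows a genuinely different line from the paper's argument, especially in part~(1).

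The paper first reduces to the case $\zeta=0$ and $W_\zeta=W$ by applying the $W_\zeta$-equivariant automorphism $x_i\mapsto x_i-\langle x_i,b\rangle$, $\partial_j\mapsto\partial_j-\langle\partial_j,\alpha\rangle$ of $\dd(V)$ (where $(b,\alpha)$ is the unique point of $\pi^{-1}(\zeta)$). After this reduction one has $V_0(\lambda)=\delta^0\otimes\lambda$ with $\delta^0=\Ind_A^{\dd(V)}k$, and the paper finishes by invoking the Azumaya property of $\dd(V)$: since $\dd(V)\twoheadrightarrow\End_k(\delta^0)$, for any nonzero $l=\sum_i v_i\otimes l_i$ there is $D\in\dd(V)$ with $D\cdot l=1\otimes l_i$, whence $(\dd(V)\rtimes W)\cdot l=V_0(\lambda)$. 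Parts~(2) and~(3) are then disposed of tersely via a $\Hom$-dimension count and by looking at the $A$-socle of a simple module.

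Your approach bypasses the Azumaya machinery entirely. Instead you identify the honest $\zeta$-weight space with $1\otimes\lambda$ via the explicit PBW computation $(\partial_j-\zeta(\partial_j))\cdot(x^\alpha\otimes v)=\alpha_j x^{\alpha-e_j}\otimes v$, and then run a highest-weight argument: any nonzero submodule, being $A$-stable, meets $1\otimes\lambda$ in a nonzero $W_\zeta$-subrepresentation. This is more elementary and makes no appeal to the fact that $\dd(V)$ is Azumaya of rank $p^{2n}$; the trade-off is a little more explicit bookkeeping. Your argument for~(3) is also more detailed than the paper's: the commutator identity $(\mathrm{ad}\,a)^2(x_i)=0$ and the resulting truncated Leibniz formula showing that $x_i$ preserves generalized $A$-weight spaces is a nice observation, though strictly speaking you only need the existence of a nonzero honest $A$-weight vector (i.e.\ a nonzero $A$-socle) to run Frobenius reciprocity, so the single-orbit statement, while true, is a bit more than is required.
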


\begin{proof}
Considered as an $A$-module, $V_\zeta (\lambda) = \bigoplus_{\eta \in W \cdot \zeta} (V_\zeta (\lambda))_{\eta}$ where $(V_\zeta (\lambda))_{\eta}$ is set-theoretically supported at $\eta$. Each $(V_\zeta (\lambda))_{w \zeta}$ is a $\dd(V) \rtimes (w W_\zeta w^{-1})$-submodule of $V_\zeta (\lambda)$ and $V_\zeta (\lambda)$ will be a simple $\dd(V) \rtimes W$-module if and only if $(V_\zeta (\lambda))_{\zeta} = \Ind_{A \rtimes W_\zeta}^{\dd(V) \rtimes W_\zeta} \lambda$ is a simple $\dd(V) \rtimes W_\zeta$-module. If $\zeta = (a, \alpha)$ with $a \in V^{(1)}$ then we write $(b,\alpha)$ with $b \in V$ for the unique closed point in $\pi^{-1}(\zeta)$.  Applying the $W_\zeta$-equivariant automorphism $x_i \mapsto x_i - \langle x_i, b\rangle$ and $\pa_j \mapsto \pa_j - \langle \pa_j,\alpha \rangle$ to $\dd(V)$, we may assume without loss of generality that $\zeta = 0$ and $W_\zeta = W$. Let $\delta^0 = \Ind_{A}^{\dd(V)} k$ be the unique simple $\dd(V)$-module supported at $0 \in (T^* V)^{(1)}$; simplicity of $\delta^0$ follows from the fact that $\dd(V)$ is Azumaya of rank $p^{2n}$ and the dimension of $\delta^0$ is $p^n$. Then $V_0(\lambda) = \delta^0 \o \lambda$, with $W$ acting diagonally. The module $V_0(\lambda)$ is simple: let $v_1, \ds, v_{p^n}$ be a basis of $\delta^0$ such that $v_1 = 1$ and choose any $0 \neq l = \sum_i v_i \o l_i \in V_0(\lambda)$. Choose an $i$ such that $l_i \neq 0$. Since $\dd(V)$ surjects onto $\End_k(\delta^0)$, there is some $D \in \dd(V)$ such that $D \cdot v_j = 0$ for all $j \neq i$ and $D \cdot v_i = v_1$. Then $D \cdot l = 1 \o l_i$ and we have $W \cdot (D \cdot l) = 1 \o \lambda$. Hence $\dd(V) \rtimes W \cdot l = V_0(\lambda)$. To show that the various $V_0(\mu)$ are non-isomorphic, note that
$$
\dim_k \Hom_{\dd(V) \rtimes W}(V_0(\lambda),V_0(\mu)) = \dim_k \Hom_{A \rtimes
W}(\lambda, V_0(\mu)) = \delta_{\lambda,\mu}
$$
because the space $\{ v \in \delta^0 \ | \ \pa_i \cdot v = 0 \ \forall \ i \}$ is one-dimensional. Arguing geometrically as above shows the second claim of the statement. The final statement is clear just by considering the socle, as a $A$-module, of an simple $\dd(V) \rtimes W$-module.
\end{proof}

\subsection{} Now we require a special case, Proposition \ref{prop:Cartier}, of a classical result by Cartier on $\dd$-module with zero $p$-curvature. We follow the presentation given in \cite[\S 5]{KatzNilpotent}. Recall that if $D \in \Der (V) \subset \dd(V)$ is a derivation, then $D^p$ also acts as a derivation and we write $D^{[p]}$ for this derivation so that $D^p - D^{[p]}$ acts trivially on $k [V]$.

\begin{defn}
Let $M$ be a finitely generated $\dd(V) \rtimes W$-module. The \tit{$p$-curvature} of $M$ is the map $\psi : \Der(V) \ra \End_k(M)$ given by $\psi(D) = \rho(D)^p - \rho(D^{[p]})$, where $\rho : \dd(V) \rtimes W \ra
\End_k(M)$ is the action map. We say that $M$ has \tit{zero $p$-curvature} if $\psi = 0$.
\end{defn}

Denote by $\mc{D}_0$ the full subcategory of $\Lmod{\dd(V) \rtimes W}$ consisting of modules with zero $p$-curvature.

\begin{prop}\label{prop:Cartier}
Let $M \in \Lmod{\dd(V) \rtimes W}$. Let $V^{(1)} \subset (T^* V)^{(1)}$ be the zero section (defined by $\pa_1^p = \ds = \pa^p_n = 0$).
\begin{enumerate}
\item The module $M$ has zero $p$-curvature if and only if $M$ is scheme-theoretically supported on $V^{(1)}$ when considered as a $Z(\dd(V))$-module i.e. $\pa_i^p \cdot M = 0$ for all $i$.
\item The ``horizontal sections" functor $DR : \mc{D}_0 \ra \Lmod{k[V^{(1)}] \rtimes W}$,
$$
DR(M) := M^{\nabla} = \{ m \in M \ | \ \pa_i \cdot m = 0 \ \forall \ i \},
$$
is an equivalence of categories with quasi-inverse $DR^{\perp} : \Lmod{k[V^{(1)}] \rtimes W} \ra \mc{D}_0$,
$$
DR^{\perp}(N) = \Ind_{A \rtimes W}^{\dd(V) \rtimes W} N,
$$
where $A$ acts on $N$ via the morphism $A \ra k[V^{(1)}]$, $\pa_i \mapsto 0$.
\item The equivalence $DR$ restricts to an equivalence of graded categories $\gr \mc{D}_0 \ra \grLmod{k[V^{(1)}] \rtimes W}$.
\end{enumerate}
\end{prop}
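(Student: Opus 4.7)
The plan is to establish the three parts by direct computation, with the hard direction in (2) reduced to classical Cartier descent.

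For (1), I will compute the $p$-curvature on the basis $\partial_1,\dots,\partial_n$ of $\Der(V)$. A direct calculation on monomials shows that $\partial_i^p$ acts as zero as a derivation of $k[V]$ (each monomial's coefficient is a product of $p$ consecutive integers, hence vanishes modulo $p$), so $\partial_i^{[p]} = 0$ and hence $\psi(\partial_i) = \rho(\partial_i^p)$. Since the $p$-curvature is determined by its values on a $k[V]$-basis of $\Der(V)$, the condition $\psi \equiv 0$ is equivalent to $\rho(\partial_i^p) = 0$ for all $i$. Under the identification $Z(\dd(V)) \cong A = k[(T^*V)^{(1)}]$, the zero section $V^{(1)} \subset (T^*V)^{(1)}$ is cut out precisely by the ideal $(\partial_1^p,\dots,\partial_n^p)$, so this gives (1).

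For (2), I will first check both functors are well-defined: the subalgebra $k[V^{(1)}] = k[x_1^p,\dots,x_n^p]$ lies in $Z(\dd(V))$ and is $W$-stable, and $W$ permutes the $\partial_i$'s up to $k[V]$-coefficients, so $M^\nabla$ inherits a natural $k[V^{(1)}] \rtimes W$-action, while $DR^\perp(N)$ is clearly a $\dd(V) \rtimes W$-module. The composition $DR \circ DR^\perp \cong \id$ follows by direct calculation: using that $k[V]$ is free over $k[V^{(1)}]$ with basis $\{x^\alpha : 0 \le \alpha_i \le p-1\}$, one finds $DR^\perp(N) \cong k[V] \otimes_{k[V^{(1)}]} N$ with $\partial_i$ acting as $\partial/\partial x_i \otimes \id$, and solving the horizontal section equation forces all components with $\alpha \ne 0$ to vanish, yielding $(DR^\perp N)^\nabla = 1 \otimes N$. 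The reverse composition $DR^\perp \circ DR \cong \id$ amounts to showing that the natural multiplication map $k[V] \otimes_{k[V^{(1)}]} M^\nabla \to M$ is an isomorphism whenever $M$ has zero $p$-curvature; this is precisely Cartier's descent theorem, for which I will appeal to \cite[\S 5]{KatzNilpotent}. Since both the multiplication map and the natural map $M \to DR^\perp(M^\nabla)$ are $W$-equivariant by construction, the equivariant refinement is automatic.

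For (3), I will observe that under the grading with $\deg V = -1$ and $\deg V^* = 1$ the operator $\partial_i$ has degree $-1$, so $M^\nabla = \bigcap_i \ker \partial_i$ is a graded subspace of any graded module $M$; dually, $DR^\perp(N)$ inherits a natural grading from $N$ via the grading on $\dd(V)$. Both functors thus restrict to graded categories, and the unit and counit of the adjunction are homogeneous of degree zero by construction.

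The main obstacle is the essential surjectivity direction $DR^\perp \circ DR \cong \id$, which is the substantive content of Cartier's theorem; once this classical statement is accepted, the verification of well-definedness, the other composition, the $W$-equivariant upgrade, and the graded refinement are all routine.
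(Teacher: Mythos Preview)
Your proposal is correct, and the overall structure matches the paper's proof. Two points of comparison are worth noting.

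In part (1), your assertion that ``the $p$-curvature is determined by its values on a $k[V]$-basis of $\Der(V)$'' is precisely the $p$-linearity of $\psi$, namely $\psi(f_1 D_1 + f_2 D_2) = f_1^p\,\psi(D_1) + f_2^p\,\psi(D_2)$. This is not a triviality (it relies on Jacobson's formula for $(a+b)^p$); the paper cites it as \cite[Proposition~5.2]{KatzNilpotent}. You should make this dependence explicit rather than treat it as obvious.

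In part (2), the paper takes a more self-contained route than you do. Rather than invoking Cartier descent as a black box, it uses the explicit operators $P = \prod_i \sum_{j=0}^{p-1} \frac{(-x_i)^j}{j!}\partial_i^j$ and $T$ from \cite[(5.1.2)]{KatzNilpotent} (recorded as Lemma~\ref{lem:Poperator}): the $W$-averaged projection $\tilde{P}$ onto $M^{\nabla}$ shows that $DR$ is exact, the identity $DR \circ DR^{\perp} \cong \id$ is checked on the free module $k[V^{(1)}] \rtimes W$ and extended by a free-resolution diagram chase, and surjectivity of $DR^{\perp} \circ DR(M) \to M$ follows from $T = \id_M$. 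Your approach of computing $DR \circ DR^{\perp}$ directly on the tensor description and then citing Cartier for the other composite is shorter and perfectly legitimate; the paper's approach has the virtue of being explicit and keeping the $W$-equivariance visible at each step via the averaged projector $\tilde{P}$.
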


The following operators where introduced in \cite[(5.1.2)]{KatzNilpotent}. Their properties can be verified by direct calculation.

\begin{lem}\label{lem:Poperator}
Let $\dd(x)$ be the first Weyl algebra and $M$ a $\dd(x)$-module with zero $p$-curvature.
\begin{enumerate}
\item Define $P = \sum_{i = 0}^{p-1} \frac{(-x)^i}{i !} \pa^i \in \dd(x)$. Then $P$ defines a $k[x^p]$-linear operator on $M$ such that $P(M) \subset M^{\nabla}$, $P |_{M^{\nabla}} = \id$ and $P^2 = P$.
\item Define the map $T : M \ra M$ by
$$
 m \mapsto \sum_{i = 0}^{p-1} \frac{x^i}{i !} P(\pa^i \cdot m).
$$
Then $T = \id_M$.
\end{enumerate}
\end{lem}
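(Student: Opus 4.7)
The plan is to reduce both statements to direct computations inside the Weyl algebra, using the single crucial input that zero $p$-curvature combined with $\pa^{[p]} = 0$ in $\dd(x)$ forces $\pa^p$ to act as zero on $M$.

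For part (1), I will verify the four assertions in sequence. The $k[x^p]$-linearity follows because $[\pa, x^p] = p x^{p-1} = 0$ in characteristic $p$, so $x^p$ is central in $\dd(x)$ and in particular commutes with every monomial $(-x)^i \pa^i$ appearing in $P$. For the claim that $P(M) \subseteq M^\nabla$, I will compute $\pa \cdot P$ term by term, using $[\pa,(-x)^i] = -i(-x)^{i-1}$ to rewrite
\[
\pa \cdot \frac{(-x)^i}{i!}\pa^i = \frac{(-x)^i}{i!}\pa^{i+1} - \frac{(-x)^{i-1}}{(i-1)!}\pa^i.
\]
The sum over $0 \le i \le p-1$ telescopes, leaving only the boundary term $\frac{(-x)^{p-1}}{(p-1)!}\pa^p$, which annihilates $M$. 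The identity $P|_{M^\nabla} = \id$ is immediate because only the $i=0$ term survives when $\pa \cdot m = 0$, and then $P^2 = P$ follows formally from the previous two properties.

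For part (2), I will substitute the explicit expression for $P$ into the definition of $T$. Because $x$ and $x^i$ commute, this gives
\[
T(m) = \sum_{i=0}^{p-1} \sum_{j=0}^{p-1} \frac{(-1)^j x^{i+j}}{i!\, j!}\, \pa^{i+j} m.
\]
Collecting terms by the total power $k = i+j$, the contributions with $k \ge p$ vanish because $\pa^p \cdot m = 0$, so I may restrict to $0 \le k \le p-1$ and sum $i$ from $0$ to $k$. The inner coefficient becomes $\frac{1}{k!}\sum_{i=0}^{k}\binom{k}{i}(-1)^{k-i} = \frac{(1-1)^k}{k!}$, which is $1$ for $k=0$ and $0$ otherwise, yielding $T(m) = m$.

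There is no real obstacle; the only subtleties are bookkeeping, namely keeping the signs in $[\pa,(-x)^i]$ correct and remembering that $\pa^p$ acts as zero (not just $\pa^{[p]}$) precisely because of the zero $p$-curvature hypothesis together with $\pa^{[p]} = 0$ in $\dd(x)$.
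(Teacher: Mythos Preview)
Your proof is correct and follows exactly the approach the paper indicates: the paper merely states that ``their properties can be verified by direct calculation'' and cites Katz, whereas you actually carry out that calculation. The telescoping argument for $\pa\cdot P$ and the binomial collapse for $T$ are precisely the expected direct verifications, and you correctly identify that the zero $p$-curvature hypothesis enters only through $\pa^p\cdot M=0$ (since $\pa^{[p]}=0$ in $\dd(x)$).
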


\begin{proof}[Proof of Proposition \ref{prop:Cartier}]

\begin{enumerate}
\item It is shown in \cite[Proposition 5.2]{KatzNilpotent} that $\psi$ is $p$-linear i.e. $\psi(f_1 D_1 + f_2 D_2) = f^p_1 \psi(D_1)+ f^p_2 \psi(D_2)$ for all $f_i \in k[V]$ and $D_i \in \Der(V)$. Since $\psi(\pa_i) = \rho(\pa_i)^p$ for all $i$ it is clearly necessary that $\rho(\pa_i)^p = 0$, i.e. $\pa_i^p \cdot M = 0$, in order for $M$ to have zero $p$-curvature. On the other hand, every element $D \in \Der(V)$ can be expressed as $D = \sum_{i = 1}^n f_i \pa_i$ for some $f_i \in k[V]$; thus
$$
\psi(D) = \sum_{i = 1}^n f_i^p \pa_i^p
$$
and $\psi(D) \cdot M = 0$ if $\pa_i^p \cdot M = 0$ for all $i$.

\item  As in Lemma \ref{lem:Poperator}, define $P_i = \sum_{j = 0}^{p-1} \frac{(-x_i)^j}{j !} \pa_i^j$ and set $P = \prod_{i = 1}^n P_i$. Let $M \in \mc{D}_0$. Then Lemma \ref{lem:Poperator} implies $P$ defines a $k[V^{(1)}]$-linear operator on $M$ such that $DR(M) = P(M)$, $P_{M^{\nabla}} = \mathsf{id}$ and $P^2 = P$. Note that the subspace $M^{\nabla}$ of $M$ is a $W$-submodule of $M$. Therefore we define
$$
\tilde{P} = \frac{1}{|W|} \sum_{w \in W} w \cdot P,
$$
so that $\tilde{P}$ is a $W$-equivariant projection onto $M^{\nabla}$. Let
$$
0 \ra M_1 \ra M_2 \stackrel{\phi}{\ra} M_3 \ra 0
$$
be a short exact sequence in $\mc{D}_0$. Since $\tilde{P} \in \dd(V)$, we have $\tilde{P} \circ \phi = \phi \circ \tilde{P}$. Therefore $DR(\phi) : DR(M_2) \ra DR(M_3)$ is surjective and the left exact functor $DR$ is actually exact. Similarly, since $\dd(V)  \rtimes W$ is flat over $A \rtimes W$, $DR^{\perp}$ is also an exact functor. If $(\pa_1, \ds, \pa_n)$ is the left ideal of $\dd(V) \rtimes W$ generated by $\pa_1, \ds, \pa_n$, then $DR^{\perp}(k[V^{(1)}] \rtimes W) = \dd(V) \rtimes W / (\pa_1, \ds, \pa_n)$ and
$$
DR \left( \frac{\dd(V) \rtimes W}{(\pa_1, \ds, \pa_n)} \right) = \frac{A \rtimes W + (\pa_1, \ds, \pa_n)}{(\pa_1, \ds, \pa_n)} \simeq k[V^{(1)}] \rtimes W
$$
as a $k[V^{(1)}] \rtimes W$-module. Since $DR \circ DR^{\perp}(k[V^{(1)}] \rtimes W) = k[V^{(1)}] \rtimes W$ and the functor $DR \circ DR^{\perp}$ is exact, for each $N \in \Lmod{k[V^{(1)}] \rtimes W}$ we get the standard diagram
$$
\xymatrix{
k[V^{(1)}] \rtimes W^k \ar[r] \ar[d] & k[V^{(1)}] \rtimes W^l \ar[r]\ar[d]  & N \ar[r] \ar[d] & 0 \\
DR \circ DR^{\perp} (k[V^{(1)}] \rtimes W)^k \ar[r] & DR \circ DR^{\perp}(k[V^{(1)}] \rtimes W)^l \ar[r] & DR \circ DR^{\perp}(N) \ar[r] & 0
 }
$$
where the first two vertical morphisms are isomorphisms. This implies that the third vertical morphism is also an isomorphism. Hence the natural transformation $1 \ra DR \circ DR^{\perp}$, coming from the fact that $DR^{\perp}$ is left adjoint to $DR$, is an isomorphism. Now take $M \in \mc{D}_0$ and consider the natural morphism $DR^{\perp} \circ DR (M) \ra M$. Since $DR$ is conservative, the fact that  $DR \circ DR^{\perp} = 1$ implies that this morphism is injective. On the other hand, if we define $T = \prod_{i = 1}^{n} T_i$, where $T_i : M \ra M$,
$$
T_i(m) = \sum_{j = 0}^{p-1} \frac{x^j_i}{j !} P_i(\pa^j_i \cdot m),
$$
then Lemma \ref{lem:Poperator} implies that $T = \mathsf{id}_M$. This proves that $DR^{\perp} \circ DR (M) \ra M$ is surjective.

\item It is straight-forward to see that $DR$ and $DR^{\perp}$ send graded modules to graded modules.

\end{enumerate}

\end{proof}


\subsection{} Let $\mathbf{1}$ denote the trivial $W$-module. The following observation will be required later.

\begin{lem}\label{lem:tensorreg}
Let $N$ be a $k[V^{(1)}] \rtimes W$-module such that $DR^{\perp}(N)$ is isomorphic to $p^n$ copies of the regular representation as a $W$-module. Then $N$ is isomorphic to the regular representation as a $W$-module.
\end{lem}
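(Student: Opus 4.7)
My plan is to identify $DR^\perp(N)$ as a $W$-module with a concrete tensor product, then use Brauer characters (via the decomposition bijection of \ref{RepsReduction}) to pin $N$ down. Set $X := k[V]/\langle k[V^{(1)}]_+\rangle = k[V]/(x_1^p,\ldots,x_n^p)$. I claim there is a $W$-equivariant isomorphism $DR^\perp(N) \simeq X \otimes_k N$ with diagonal $W$-action on the right. Indeed, the Frobenius pullback $k[V] \otimes_{k[V^{(1)}]} N$ carries its canonical flat $\dd(V)$-connection ($\pa_i$ acting by derivation on $k[V]$), has zero $p$-curvature (since $\pa_i^p$ kills $k[V]$ in characteristic $p$), and its horizontal sections form a copy of $N$; Proposition~\ref{prop:Cartier}(2) then identifies it with $DR^\perp(N)$. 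Since $|W|$ is invertible in $k$, complete reducibility produces a $W$-equivariant $k$-linear splitting $X \hookrightarrow k[V]$ of the projection $k[V]\twoheadrightarrow X$; the induced multiplication map $k[V^{(1)}] \otimes_k X \to k[V]$ is a surjective $W$-equivariant map of free $k[V^{(1)}]$-modules of equal rank $p^n$, hence an isomorphism. Tensoring with $N$ over $k[V^{(1)}]$ yields the claimed isomorphism of $W$-modules.

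Next, I will show the lifted character $\chi_{X_K}$ of $X_K := K[V]/(x_1^p,\ldots,x_n^p)$ (in the notation of \ref{RepsReduction}) is nowhere vanishing. Diagonalising $g \in W$ on $V^*$ over $\bar K$ with eigenvalues $\tilde\lambda_1,\ldots,\tilde\lambda_n$ (roots of unity of order dividing $|W|$, hence coprime to $p$), the monomials $\xi^\alpha$ with $0 \le \alpha_i < p$ in an eigenbasis form an eigenbasis of $X_K$, so
\[
\chi_{X_K}(g) \;=\; \prod_{i=1}^n \bigl(1 + \tilde\lambda_i + \cdots + \tilde\lambda_i^{p-1}\bigr).
\]
Each factor equals $p$ when $\tilde\lambda_i = 1$, and equals $(1-\tilde\lambda_i^p)/(1-\tilde\lambda_i)$ otherwise; in the latter case coprimality of $p$ with the order of $\tilde\lambda_i$ forces $\tilde\lambda_i^p \ne 1$, so the factor is nonzero. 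Hence $\chi_{X_K}(g) \ne 0$ for every $g \in W$.

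Finally, the hypothesis $DR^\perp(N) \simeq p^n \cdot kW$ combined with the formula from the first step gives $X \otimes_k N \simeq p^n \cdot kW$ as $W$-modules; via the decomposition bijection this lifts to $\chi_{X_K} \cdot \chi_{N_K} = p^n \chi_{KW}$ in the complex character ring. Since $\chi_{KW}$ vanishes off the identity and $\chi_{X_K}$ is nowhere zero, $\chi_{N_K}(g) = 0$ for all $g \ne 1$; the case $g = 1$ gives $\chi_{N_K}(1) = |W|$. Thus $\chi_{N_K} = \chi_{KW}$ and $N \simeq kW$. The subtle point is the non-vanishing of $\chi_{X_K}$: the $k$-valued trace of $X$ at a non-identity element often vanishes (for instance at a reflection, where one of the product factors is $p = 0$ in $k$), so it is essential to work with the lift over $K$ before dividing.
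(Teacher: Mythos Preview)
Your proof is correct and follows essentially the same route as the paper: identify $DR^\perp(N)$ with $X \otimes_k N$ as a $W$-module (the paper writes this as $V_0(\mathbf{1}) \otimes N$), compute the Brauer character of $X$ as $\prod_i (1-\tilde\lambda_i^p)/(1-\tilde\lambda_i)$, observe this is nowhere vanishing because $p \nmid |W|$, and then divide in the characteristic-zero character ring. Your treatment is more explicit in places (the Frobenius-pullback description of $DR^\perp(N)$, the $W$-equivariant splitting $k[V] \simeq k[V^{(1)}] \otimes X$, and the closing remark on why one must lift before dividing), but the argument is the same.
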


\begin{proof}
As a $W$-module, $DR^{\perp}(N) \simeq V_0(\mathbf{1}) \o N$. Therefore it suffices to show that the Brauer character $\chi$ of $V_0(\mathbf{1})$ satisfies $\chi(w) \neq 0$ for all $w$ in $W$. Recall that $V_0(\mathbf{1}) \simeq k[V^*] / \langle k[(V^*)^{(1)}]_+ \rangle$. Let $\lambda_1, \ds, \lambda_n$ be the eigenvalues of $w$ on $V$. Since we are calculating the Brauer character of $V_0(\mathbf{1})$, we assume that $\lambda_i \in \C$ for all $i$. Then
$$
\chi(g) = \mathrm{Tr}(g,V_0(\mathbf{1})) = \prod_{i = 1}^n \frac{1 - \lambda^p_i}{1 - \lambda_i}.
$$
Since $\chi(g) \in \C$ and $p$ does not divide $|W|$, the product on the right hand side is non-zero.
\end{proof}

\subsection{Lattices}

In this section let $k$ be an arbitrary algebraically closed field. We let $H$ denoted a $\Z$-graded $k[x]$-algebra such that $H$ is a finite, free $k[x]$-module, where $k[x]$ is graded with $\deg (x) = 0$. Write $\ell = \Spec k[x]$ and $K = k(x)$. For $\alpha \in k$, we denote by $H_{\alpha}$ the specialization $H \o_{k[x]} k_{\alpha}$ of $H$ at $\alpha$. We assume that there exists a finite set $I$ and $H$-modules $\{ \Delta(\lambda) \ | \lambda \in I \}$ such that
\begin{enumerate}
\item The module $\Delta(\lambda)$ is graded and free as a $k[x]$-module.
\item For all $\alpha \in \ell$, there is a bijection $\Irr H_\alpha \simeq I$ such that the simple, graded $H_{\alpha}$-module $L_{\alpha}(\lambda)$ is the unique simple quotient of $\Delta_{\alpha}(\lambda)$, the specialization of $\Delta(\lambda)$ at $\alpha$. 
\item There is a bijection $I \simeq \Irr H_K$ such that the simple, graded $H_K$-module $L_K(\lambda)$ is the unique simple quotient of $\Delta_K(\lambda)$.
\end{enumerate}

Let $M$ be a $H$-module. We say that $M$ is a \textit{$H$-lattice} if it is free, of finite rank, as a $k[x]$-module. Note that if $M$ is a $H$-lattice and $N$ a $H$-submodule, then $N$ is a $H$-lattice because $k[x]$ is a principal ideal domain. 

\begin{lem}\label{lem:compseries}
There exists a composition series $0 = \Delta^0(\lambda) \subset \cdots \subset \Delta^r(\lambda) = \Delta(\lambda)$ of graded $H$-lattices such that if $L^i(\lambda) = \Delta^i(\lambda) / \Delta^{i-1}(\lambda)$, then $L^i(\lambda)$ is a graded $H$-lattice and $L^i(\lambda)_K \simeq L_K(\lambda_i)$ for some $\lambda_i \in I$.
\end{lem}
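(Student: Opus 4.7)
My strategy is to take a composition series over the generic fiber and pull it back by intersection. Concretely, consider the generic specialization $\Delta(\lambda)_K := \Delta(\lambda) \otimes_{k[x]} K$. Since $\Delta(\lambda)$ is a free $k[x]$-module of finite rank and $H$ is finite free over $k[x]$, this is a finite-dimensional graded $H_K$-module, and it therefore admits a composition series in the category of graded $H_K$-modules
\[
0 = M^0 \subset M^1 \subset \cdots \subset M^r = \Delta(\lambda)_K,
\]
with $M^i/M^{i-1}$ a simple graded $H_K$-module, hence (possibly after absorbing a grading shift into the labeling) isomorphic to $L_K(\lambda_i)$ for some $\lambda_i \in I$ under the bijection in hypothesis (3). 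The injection $\Delta(\lambda) \hookrightarrow \Delta(\lambda)_K$ (which is injective since $\Delta(\lambda)$ is $k[x]$-free) lets me define
\[
\Delta^i(\lambda) := M^i \cap \Delta(\lambda) \subseteq \Delta(\lambda),
\]
where the intersection is taken inside $\Delta(\lambda)_K$. Each $\Delta^i(\lambda)$ is automatically a graded $H$-submodule, because $M^i$ is graded and $H$-stable and the inclusion $\Delta(\lambda) \subseteq \Delta(\lambda)_K$ is a morphism of graded $H$-modules.

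Next I would verify that each $\Delta^i(\lambda)$ and each successive quotient $L^i(\lambda) := \Delta^i(\lambda)/\Delta^{i-1}(\lambda)$ is actually a graded $H$-lattice, i.e.\ free of finite rank over $k[x]$. Finite generation is automatic because $k[x]$ is Noetherian and $\Delta(\lambda)$ is a finitely generated $k[x]$-module. For freeness of $\Delta^i(\lambda)$, since $k[x]$ is a PID and $\Delta^i(\lambda)$ is a submodule of the free module $\Delta(\lambda)$, it is itself free. The main point is that the subquotient $L^i(\lambda)$ is torsion-free: if $m \in \Delta^i(\lambda)$ satisfies $xm \in \Delta^{i-1}(\lambda) \subseteq M^{i-1}$, then $m = x^{-1}(xm) \in M^{i-1}$ since $M^{i-1}$ is a $K$-subspace, and thus $m \in M^{i-1} \cap \Delta(\lambda) = \Delta^{i-1}(\lambda)$. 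A finitely generated torsion-free module over the PID $k[x]$ is free, so $L^i(\lambda)$ is indeed a graded $H$-lattice.

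Finally, to identify the generic fiber of the filtration with the original composition series, I would show $\Delta^i(\lambda) \otimes_{k[x]} K = M^i$. The inclusion $\Delta^i(\lambda) \otimes_{k[x]} K \subseteq M^i$ is clear. Conversely, any $m \in M^i$ can be written as $m = n/f$ with $n \in \Delta(\lambda)$, $0 \neq f \in k[x]$; then $n = fm \in M^i \cap \Delta(\lambda) = \Delta^i(\lambda)$, giving $m \in \Delta^i(\lambda) \otimes_{k[x]} K$. Tensoring the filtration with $K$ (which is exact) then yields
\[
L^i(\lambda)_K \;\simeq\; M^i / M^{i-1} \;\simeq\; L_K(\lambda_i),
\]
as required.

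\textbf{Main obstacle.} No step is genuinely hard, but the one that needs attention is the saturation/torsion-freeness argument, which is precisely what ensures that the pulled-back filtration lands in the right category (graded $H$-lattices rather than merely graded $H$-modules with $x$-torsion). Everything else is a routine application of flatness of localization and the structure theorem for modules over a PID.
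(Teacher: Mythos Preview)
Your proposal is correct and follows essentially the same approach as the paper: pull back a graded composition series of $\Delta(\lambda)_K$ to $\Delta(\lambda)$ by intersection, use the PID property of $k[x]$ for freeness, and verify torsion-freeness of the subquotients via the saturation argument. One cosmetic point: your torsion-freeness check should be stated for an arbitrary nonzero $f(x)\in k[x]$ rather than just for $x$ (e.g.\ $k[x]/(x-1)$ is $x$-torsion-free but not torsion-free), though your argument goes through verbatim with $x$ replaced by $f$.
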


\begin{proof}
Fix a graded composition series 
$$
0 = \Delta^0_K(\lambda) \subset \cdots \subset \Delta^r_K(\lambda) = \Delta_K(\lambda)
$$
such that $\Delta^i_K(\lambda) / \Delta^{i-1}_K(\lambda) \simeq L_K(\lambda_i)$ for some $\lambda_i \in I$. Write $\phi : \Delta(\lambda) \ra \Delta_K(\lambda)$ for the natural map. It is an inclusion. We set 
$$
\Delta^i(\lambda) = \phi^{-1}(\Delta_K^i(\lambda)) = \Delta_K^i(\lambda) \cap \Delta(\lambda), \quad \forall \ i.
$$
Then $\Delta^i(\lambda)$ is a $H$-submodule of the $H$-lattice $\Delta(\lambda)$, hence it is a $H$-lattice. We have a $H$-morphism $\phi_i : L^i(\lambda) := \Delta^i(\lambda) / \Delta^{i-1}(\lambda) \ra \Delta^i_K(\lambda) / \Delta^{i-1}_K(\lambda) = L_K(\lambda_i)$. We claim that $L^i(\lambda)$ is a $H$-lattice. It suffice to show that it is torsion-free with respect to $k[x]$. Let $\bar{a} \in L^i(\lambda)$ and $0 \neq f(x) \in k[x]$ such that $f(x) \cdot \bar{a} = 0$. Then $f \cdot a \in \Delta^{i-1}(\lambda) = \Delta_K^{i-1}(\lambda) \cap \Delta(\lambda)$, which implies that $a \in \Delta_K^{i-1}(\lambda) \cap \Delta(\lambda) = \Delta^{i-1}(\lambda)$. Hence $\bar{a} = 0$. Since $L^i(\lambda)$ is a $H$-lattice and $L_K(\lambda_i)$ is a simple $H_K$-module, $\phi_i$ will induce an isomorphism $L^i(\lambda)_K \simeq L_K(\lambda_i)$ provided $L^i(\lambda) \neq 0$. Let $0 \neq b \in \Delta^i_K(\lambda) - \Delta^{i-1}_K(\lambda)$, then there exists some $0 \neq g(x) \in k[x]$ such that $g(x) b \in \Delta^i_K(\lambda) \cap \Delta(\lambda)$ and $g(x) b \notin \Delta^{i-1}_K(\lambda)$. Therefore $g(x) b \in \Delta^i(\lambda) - \Delta^{i-1}(\lambda)$.
\end{proof}

\subsection{}\label{sec:genericL}

By Lemma \ref{lem:compseries}, for each $\lambda \in I$ we have a graded $H$-lattice $L(\lambda) := L^r(\lambda)$ such that $\Delta(\lambda) \twoheadrightarrow L(\lambda)$. The specialization of $L(\lambda)$ to $x = \alpha \in k$ is denoted $L(\lambda)_{x = \alpha}$ to distinguish it from the simple module $L_{\alpha}(\lambda)$.

\begin{lem}\label{lem:primeann}
There exists a prime ideal $\mf{p} \lhd H$ such that $\mf{p} \cdot L(\lambda) = 0$.
\end{lem}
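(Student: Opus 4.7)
The plan is to take $\mf{p} := \ann_H L(\lambda)$, the annihilator of $L(\lambda)$ as a two-sided ideal of $H$, and argue that it is already prime. The strategy has two steps: first identify $\mf{p}$ with the contraction of a two-sided ideal of $H_K$, where simplicity makes primeness evident, and then transport primeness back down to $H$.

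For the first step, because $L(\lambda)$ is a $k[x]$-lattice it is $k[x]$-torsion-free, so the canonical map $L(\lambda) \hookrightarrow L(\lambda) \otimes_{k[x]} K = L(\lambda)_K$ is injective. This immediately yields
\[
\mf{p} \;=\; H \cap \ann_{H_K} L(\lambda)_K .
\]
Moreover, by the construction of $L(\lambda) = L^r(\lambda)$ in Lemma \ref{lem:compseries} as the top composition factor of $\Delta(\lambda)$, together with the defining property that $L_K(\lambda)$ is the unique simple quotient of $\Delta_K(\lambda)$, the generic fibre $L(\lambda)_K$ is isomorphic to $L_K(\lambda)$, which is by hypothesis a simple $H_K$-module.

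For the second step, I invoke two standard ring-theoretic facts. First, the annihilator $\ann_{H_K} L_K(\lambda)$ is prime in $H_K$ because $L_K(\lambda)$ is simple: if $I, J$ are two-sided ideals of $H_K$ with $IJ \cdot L_K(\lambda) = 0$ and $J \not\subseteq \ann_{H_K} L_K(\lambda)$, then $J L_K(\lambda) = L_K(\lambda)$, hence $I L_K(\lambda) = (IJ) L_K(\lambda) = 0$. Second, since $k[x]$ lies in the centre of $H$, the inclusion $H \hookrightarrow H_K$ is a central localization, so every element of $H_K$ has the form $h s^{-1}$ with $h \in H$ and $0 \neq s \in k[x]$. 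For such an extension the contraction of any prime ideal $Q \lhd H_K$ back to $H$ is prime: given $a, b \in H$ with $aHb \subseteq H \cap Q$, centrality of $k[x]\setminus 0$ gives $a H_K b = (aHb) \cdot (k[x]\setminus 0)^{-1} \subseteq Q$, so $a \in Q$ or $b \in Q$. Applying this with $Q = \ann_{H_K} L_K(\lambda)$ proves that $\mf{p}$ is prime.

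The argument is largely bookkeeping rather than containing a single serious obstacle; the point that requires a little care is to keep track of the correct non-commutative notion of a prime (two-sided) ideal and to exploit the centrality of $k[x]$ in $H$ when passing between two-sided ideals of $H$ and of $H_K$, so that extension and contraction along $H \hookrightarrow H_K$ preserve primeness.
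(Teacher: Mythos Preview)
Your proof is correct and is genuinely different from the paper's argument. The paper proceeds by considering the set $\mathrm{Ann}_1$ of two-sided ideals $I$ of $H$ for which $\Delta(\lambda)/I\Delta(\lambda)$ still has GK-dimension one, picks a maximal element $\mf{p}$ of this set by Noetherianity, and then runs a GK-dimension argument (using results from McConnell--Robson) to show that $\mf{p}$ is prime; only afterwards do they verify that $\mf{p}$ kills $L(\lambda)$ by passing through the quotient $\Delta(\lambda)/\mf{p}\Delta(\lambda)$ and its generic fibre. Your route is more direct: you take $\mf{p}=\ann_H L(\lambda)$, identify it with the contraction of the primitive ideal $\ann_{H_K} L_K(\lambda)$ along the central localization $H\hookrightarrow H_K$, and use the elementary facts that primitive ideals are prime and that contraction along a central localization preserves primeness. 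This avoids GK-dimension entirely and uses only basic noncommutative ring theory. The paper's approach has the mild advantage of not needing to know in advance that $L(\lambda)_K$ is simple (it deduces everything from maximality in $\mathrm{Ann}_1$), but in the present setup that simplicity is already supplied by Lemma~\ref{lem:compseries}, so your argument is both shorter and more transparent.
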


\begin{proof}
Let $\mathrm{Ann}_1 = \{ I \lhd H \ | \ \mathrm{GKdim} \ (\Delta(\lambda) / I \Delta(\lambda)) = 1 \}$. Since $\mathrm{GKdim} \ (\Delta(\lambda)) = 1$, the set $\mathrm{Ann}_1$ is non-empty. Since $H$ is Noetherian, we can choose $\mf{p} \in \mathrm{Ann}_1$ to be maximal with respect to inclusion. The claim is that $\mf{p}$ is prime. Assume otherwise, so that there exist ideals $I,J$ with $I J \subset \mf{p}$ but $I,J \notin \mathrm{Ann}_1$. Note that $\Delta / \mf{p} \cdot \Delta$ has GK-dimension one and is a finitely generated $H / \mf{p}$-module. Therefore $H / \mf{p}$ has GK-dimension one too. Since $\Delta / J \Delta$ has GK-dimension zero, the short exact sequence
$$
0 \ra  J \Delta / \mf{p} \Delta \ra \Delta / \mf{p} \Delta \ra \Delta / J \Delta \ra 0
$$
implies, by \cite[Proposition 8.3.11]{MR}, that $\mathrm{GKdim} (J \Delta / \mf{p} \Delta) = 1$. However, $J \Delta / \mf{p} \Delta$ is clearly a torsion $H / \mf{p}$-module. Therefore \cite[Corollary 8.3.6]{MR} implies that the GK-dimension of $J \Delta / \mf{p} \Delta$ is zero. This contradiction implies that $\mf{p}$ is prime. Let $M = \Delta / \mf{p} \cdot \Delta$. The $H$-submodule of $M$ consisting of elements that are torsion with respect to $k[x]$ is a proper submodule of GK-dimension zero. Therefore quotienting out by this submodule we may assume that $M$ is torsion free and hence free, and that $\mf{p} \cdot M = 0$. Moreover, $M_K$ is a non-zero quotient of $\Delta_K(\lambda)$ which implies that $M_K \twoheadrightarrow L_K(\lambda)$. This implies that $\mf{p} \cdot L_K(\lambda)$. Since $L(\lambda)$ is a $H$-submodule of $L_K(\lambda)$, we have $\mf{p} \cdot L(\lambda) = 0$ as required.
\end{proof}

\begin{prop}\label{lem:genericL}
Let $m_{\lambda} = \max_{\alpha \in \ell} (\dim_k L_{\alpha}(\lambda))$.
\begin{enumerate}
\item There exists a finite set $\ell_0 \subset \ell$ such that $\dim_k L_{\alpha}(\lambda) = m_{\lambda} \ \Leftrightarrow \ \alpha \notin \ell_0$.
\item The rank of $L(\lambda)$ equals $m_{\lambda}$.
\item The specialization of $L(\lambda)$ at $\alpha$ is isomorphic to the simple $H_{\alpha}$-module $L_{\alpha}(\lambda)$ for all $\alpha \in \ell - \ell_0$.
\end{enumerate}
\end{prop}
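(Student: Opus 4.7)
The plan is to deduce all three parts from a single key claim: there is a finite subset $\ell_0 \subset \ell$ such that $L(\lambda)_{x=\alpha}$ is a simple $H_\alpha$-module (hence isomorphic to $L_\alpha(\lambda)$) for every $\alpha \in \ell \setminus \ell_0$. Granting this, set $d := \rank_{k[x]} L(\lambda)$. Freeness of $L(\lambda)$ over $k[x]$ gives $\dim_k L(\lambda)_{x=\alpha} = d$ for all $\alpha$. Specialising the surjection $\Delta(\lambda) \twoheadrightarrow L(\lambda)$ to $\Delta_\alpha(\lambda) \twoheadrightarrow L(\lambda)_{x=\alpha}$ and using that $\Delta_\alpha(\lambda)$ has unique simple head $L_\alpha(\lambda)$, any nonzero quotient also has $L_\alpha(\lambda)$ as unique simple head; hence $\dim_k L_\alpha(\lambda) \le d$ always, with equality iff $L(\lambda)_{x=\alpha}$ is simple. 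This yields $m_\lambda = d$ (part 2), the generic isomorphism (part 3), and the description of the bad locus by a finite set (part 1) simultaneously.

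To prove the claim, I would first strengthen Lemma \ref{lem:primeann} by passing to $\mf{q} := \ann_H(L(\lambda))$. Because $L_K(\lambda)$ is simple, $\ann_{H_K}(L_K(\lambda))$ is a maximal ideal of $H_K$; one checks $\mf{q} = \ann_{H_K}(L_K(\lambda)) \cap H$ using that $L(\lambda)$ embeds into its $K$-localisation, since $L(\lambda)$ is $k[x]$-torsion-free. Therefore $\mf{q}$ is prime, $\bar H := H/\mf{q}$ is a prime Noetherian ring finite over $k[x]$, $\mf{q} \cap k[x] = 0$, and $L(\lambda)$ is a faithful $\bar H$-module whose generic fibre is the simple $\bar H_K$-module $L_K(\lambda)$. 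In particular $\bar H_K$ is a finite-dimensional simple Artinian $K$-algebra.

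The core step is then to show that the fibres $\bar H_{x=\alpha}$ are semisimple for $\alpha$ outside a finite subset of $\ell$. Once this is done, $L(\lambda)_{x=\alpha}$ is a semisimple module with unique simple head and is therefore simple, proving the claim. For the semisimplicity, I would invoke the Azumaya locus: $\bar H$ is finite over its centre $Z(\bar H)$, which is in turn finite over $k[x]$; simplicity of $\bar H_K$ means $\bar H_K$ is Azumaya over $Z(\bar H)_K$, so the Azumaya locus of $\bar H$ is open dense in $\Spec Z(\bar H)$. The image in $\ell$ of its complement is a proper closed subset, hence finite, and off this finite set $\bar H_{x=\alpha}$ is a product of matrix algebras over $k$ and in particular semisimple.

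The principal obstacle is this generic-semisimplicity step. A direct attempt via the reduced trace form of $\bar H_K$ can fail in positive characteristic when the centre of $\bar H_K$ is not separable over $K$; routing the argument through the Azumaya locus of $\bar H$ over $Z(\bar H)$ sidesteps these separability issues. The subsidiary technical points -- confirming $\mf{q} \cap k[x] = 0$ and primality of $\mf{q}$ -- both follow cleanly from $k[x]$-torsion-freeness of $L(\lambda)$ and simplicity of $L_K(\lambda)$.
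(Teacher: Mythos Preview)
Your overall strategy matches the paper's: pass to a prime quotient $\bar H = H/\mf{q}$ and use its Azumaya locus over $\Spec Z(\bar H)$. However, your ``core step'' has a genuine gap. From the fact that every point of $\Spec Z(\bar H)$ above $\alpha$ lies in the Azumaya locus you conclude that $\bar H_{x=\alpha}$ is a product of matrix algebras over $k$. This is false in general: $\bar H_{x=\alpha} = \bar H \otimes_{Z(\bar H)} \bigl(Z(\bar H)/(x-\alpha)\bigr)$ is Azumaya over the fibre $Z(\bar H)/(x-\alpha)$, and is semisimple only when that fibre is \emph{reduced}. Reducedness of the generic fibre is equivalent to separability of $\Frac Z(\bar H)$ over $K$; if this extension is purely inseparable (which nothing in the hypotheses rules out) then $Z(\bar H)/(x-\alpha)$ is non-reduced for \emph{every} $\alpha$, and $\bar H_{x=\alpha}$ is never semisimple. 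So the Azumaya argument does not sidestep the separability obstruction you flagged --- it runs straight into it.

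The paper closes this gap by replacing the semisimplicity step with a dimension squeeze that only needs a \emph{single} Azumaya point. For $\alpha$ with $\pi^{-1}(\alpha)$ contained in the Azumaya locus, the simple $\bar H$-module $L_\alpha(\lambda)$ is supported at an Azumaya point, so $\dim_k L_\alpha(\lambda)$ equals the P.I.\ degree of $\bar H$. On the other hand $L_K(\lambda)$ is a simple module for the central localisation $\bar H_K$, so Posner's and Kaplansky's theorems give $\rank_{k[x]} L(\lambda) = \dim_K L_K(\lambda) \le \PIdeg(\bar H)$. Combined with the inequality $\dim_k L_\alpha(\lambda) \le \rank_{k[x]} L(\lambda)$ that you already proved, this forces equality and hence $L(\lambda)_{x=\alpha} \simeq L_\alpha(\lambda)$, without ever asking $\bar H_{x=\alpha}$ to be semisimple. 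Your identification $\mf{q} = \ann_H L(\lambda)$ is a clean way to produce the prime (and in fact coincides with the paper's $\mf{p}$), so only the final step needs to be replaced.
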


\begin{proof}
Let $\mf{p}$ be a prime ideal in $H$ such that $\mf{p} \cdot L(\lambda) = 0$. Its existence is guaranteed by Lemma \ref{lem:primeann}. Set $R := H / \mf{p}$ so that $L(\lambda)$ is an $R$-module. Since $R$ is a prime ring, the image of $k[x]$ in $R$ is a domain. Therefore the fact that $R$ is a finite $k[x]$-module of GK-dimension one implies that $R$ is a free $k[x]$-module. The map $\pi : \Spec Z(R) \ra \Spec k[x]$ is finite. Let $\mc{A}$ denote the Azumaya locus of $R$. Since $\mc{A}$ is open and dense in the irreducible variety $\Spec Z(R)$ and the map $\pi$ is finite, the set $\pi(\Spec Z(R) - \mc{A})$ is a proper closed subset of $\ell$. Therefore the set
$$
\ell_1 := \ell - \pi(\Spec Z(R) - \mc{A}) = \{ \alpha \in \ell \ | \ \pi^{-1}(\alpha) \subset \mc{A} \}
$$
is open and dense in $\ell$. Since the specialization $L(\lambda)_{x = \alpha}$ is a quotient of $\Delta(\lambda)_{x = \alpha} = \Delta_{\alpha}(\lambda)$, the simple module $L_{\alpha}(\lambda)$ is a quotient of $L(\lambda)_{x = \alpha}$. This implies that every $L_{\alpha}(\lambda)$ is an $R$-module and hence $m_{\lambda}$ is bounded above by the P.I. degree of $R$. By definition, this bounded is achieved for all $\alpha \in \ell_1$. Fix some $\alpha \in \ell_1$. Recall that $L_K(\lambda) = K \cdot L(\lambda)$. Hence
$$
\dim_K L_K(\lambda) = \rank_{k[x]} L(\lambda) \ge \dim_k L_{\alpha}(\lambda).
$$
On the other hand, since $L_{\alpha}(\lambda)$ is supported on the Azumaya locus of $R$ and $L_K(\lambda)$ is a simple module for the central localization $R_K$ of $R$, \cite[Posner's Theorem 13.6.5]{MR} together with \cite[Kaplansky's Theorem 13.3.8]{MR} imply that
$$
\dim_k(L_{\alpha}(\lambda)) = \PIdeg (R) \ge \dim_K \dim_K L_K(\lambda) = \rank_{k[x]} L(\lambda).
$$
Therefore we get the required equality.
\end{proof}

\subsection{}

We return to the case where $k$ has positive characteristic. Recall that $\mc{C}$ is the space of parameters for $\H$. We fix a parameter $\bc$. Let $\vc$ be a variable and denote by $\H_{\vc}(W)$ the rational Cherednik algebra over $k[\vc]$ such that the specialization of $\H_{\vc}(W)$ at $\vc = 1$ recovers $\H_{\bc}(W)$. Let $K = k(\vc)$ be the field of fractions of $k[\vc]$. Given $\lambda \in \Irr_k(W)$, let $\bar{\Delta}_{\vc}(\lambda) = \rH_{\vc}(W) \o_{k[V] \rtimes W} \lambda$ be the corresponding baby Verma module. It is a free $k[\vc]$-module of finite rank. Let $\bar{\Delta}_K(\lambda) = \rH_K(W) \o_{K[V] \rtimes W} K \o_k \lambda$ be the corresponding baby Verma module for $\rH_K(W)$.

\begin{lem}\label{lem:checkconditions}
\begin{enumerate}
\item  Every simple module $\lambda \in \Irr_{K}(W)$ is absolutely irreducible.
\item The module $\bar{\Delta}_K(\lambda)$ has a unique simple quotient $L_K(\lambda)$.
\item We have a natural identification of $\rH_K(W)$-modules $K \o_{k[\vc]} \bar{\Delta}_{\vc}(\lambda) = \bar{\Delta}_K(\lambda)$.
\end{enumerate}
\end{lem}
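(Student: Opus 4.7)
The plan is to verify the three claims in turn. All three are direct consequences of the structure theory already established for $\rH_{\bc}(W)$ together with standard base-change arguments; the substantive content is in (2).

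Claims (1) and (3) are formal. For (1), I will invoke Wedderburn--Maschke: since $k$ is algebraically closed of characteristic coprime to $|W|$, there is a split semisimple decomposition $kW \simeq \prod_{\mu \in \Irr_k W} \End_k(\mu)$. Applying $K \otimes_k (-)$ yields $KW \simeq \prod_{\mu \in \Irr_k W} \End_K(K \otimes_k \mu)$, which is again split semisimple, so every simple $KW$-module is of the form $K \otimes_k \mu$ with endomorphism ring $K$ and is therefore absolutely irreducible. For (3), the PBW isomorphism \eqref{eq:PBW} taken over the base $k[\vc]$ shows that $\rH_{\vc}(W)$ is a free $k[\vc]$-module, so localization at $k[\vc] \setminus \{0\}$ is flat and commutes with tensor products; hence
\[
K \otimes_{k[\vc]} \bar{\Delta}_{\vc}(\lambda) \cong \bigl( K \otimes_{k[\vc]} \rH_{\vc}(W) \bigr) \otimes_{K[V] \rtimes W} (K \otimes_k \lambda) = \rH_K(W) \otimes_{K[V] \rtimes W} (K \otimes_k \lambda) = \bar{\Delta}_K(\lambda).
\]

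For (2), I will repeat the argument used to establish Proposition \ref{prop:listofproperties} with $K$ in place of $k$. The algebra $\rH_K(W)$ is a finite-dimensional graded $K$-algebra with triangular decomposition $\rH_K(W) \cong K[V]^{p\mathrm{co} W} \otimes KW \otimes K[V^*]^{p\mathrm{co} W}$ inherited from PBW. The baby Verma $\bar{\Delta}_K(\lambda)$ is cyclic, generated in degree $0$ by $K \otimes_k \lambda$, so the sum of its strictly positively graded components is the unique maximal graded submodule; the resulting quotient $L_K(\lambda)$ is therefore the unique simple graded quotient. Absolute irreducibility from (1) supplies the Schur's lemma input needed for the Holmes--Nakano machinery of \cite{Baby} to apply verbatim and conclude that $L_K(\lambda)$ remains simple after forgetting the grading.

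The only point requiring thought is whether the arguments underpinning Proposition \ref{prop:listofproperties} continue to work over the non-algebraically-closed field $K$. This reduces precisely to absolute irreducibility of the simple $KW$-modules, which is exactly (1), so no ingredients beyond the paper's existing toolkit are required.
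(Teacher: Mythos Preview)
Your proofs of (1) and (3) coincide with the paper's. For (2) the paper takes a slightly different route: rather than re-running the Holmes--Nakano formalism, it computes directly that $\Hom_{K[V]\rtimes W}(\bar{\Delta}_K(\lambda),\mu)=\Hom_W(K\lambda,\mu)=K\,\delta_{\lambda,\mu}$ for each $\mu\in\Irr_K W$ (absolute irreducibility from (1) is the input here), and from this deduces that every simple quotient $S$ of $\bar{\Delta}_K(\lambda)$ admits a $K[V]\rtimes W$-surjection onto $K\lambda$, so that $S$ is generated by the image of $K\lambda$ and is therefore uniquely determined. Your approach---reproducing the graded argument behind Proposition~\ref{prop:listofproperties} over $K$---is equally valid and has the virtue of literally recycling material already in the paper; the paper's adjunction argument avoids the black-box appeal to \cite{HN} but is terser.

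One inaccuracy in your argument for (2) should be fixed. With the local convention here, $\bar{\Delta}_K(\lambda)=\rH_K(W)\otimes_{K[V]\rtimes W}K\lambda$ is concentrated in non-\emph{positive} degrees; more importantly, the subspace $\bigoplus_{i\neq 0}(\bar{\Delta}_K(\lambda))_i$ is \emph{not} an $\rH_K(W)$-submodule, since an element of $V^*$ (degree $+1$) can carry degree $-1$ into degree $0$. So the sentence ``the sum of its strictly positively graded components is the unique maximal graded submodule'' is false as written. The correct argument is the standard one you are alluding to: every proper graded submodule must have vanishing degree-$0$ component (because $K\lambda$ is simple as a $KW$-module and generates $\bar{\Delta}_K(\lambda)$), hence the sum of all proper graded submodules still has vanishing degree-$0$ component and is therefore the unique maximal graded submodule.
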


\begin{proof}
\begin{enumerate}
\item The $k$-algebra $kW$ is already split semi-simple since $k$ is assumed to be algebraically closed and the characteristic of $k$ does not divide the order of $|W|$. Therefore the $K$-algebra $K W = K \o_k k W$ is also split semi-simple.  

\item Let $\mu \in \Irr_K(W)$, considered as a $K[V] \rtimes W$-module such that $K[V]_+$ acts as zero. Adjunction implies that
$$
\Hom_{K[V] \rtimes W}(\bar{\Delta}_K(\lambda),\mu) = \Hom_{W}(K \lambda,\mu) = K \delta_{\lambda,\mu}
$$
since $K \lambda$ and $\mu$ are absolutely irreducible. Therefore, if $\phi : \bar{\Delta}_K(\lambda) \ra S$ is any simple quotient of $\bar{\Delta}_K(\lambda)$, we must have a $K[V] \rtimes W$-surjection $S \ra K \lambda$. Hence $S = \rH_K(W) \cdot K \lambda$ is the unique graded quotient of $\bar{\Delta}_K(\lambda)$.
\item The space $K \o_k \lambda \subset K \o_{k[\vc]} \bar{\Delta}_{\vc}(\lambda)$ is a $K[V] \rtimes W$ such that $K[V]_+$ acts as zero. Therefore we have a surjective map $\bar{\Delta}_K(\lambda) \ra K \o_{k[\vc]} \bar{\Delta}_{\vc}(\lambda)$ (since $K \o_k \lambda$ is a generating set for $K \o_{k[\vc]} \bar{\Delta}_{\vc}(\lambda)$). Since both modules have the same dimension over $K$, this surjection must be an isomorphism.
\end{enumerate}
\end{proof}

\subsection{A classification}

Let $\ev : K(W)[t,t^{-1}] \ra \Z[t,t^{-1}]$ be the map sending $[\lambda]$ to $\dim \lambda$. Let
\begin{equation}\label{eq:definego}
I(t) := \ev(\ch_{t,W}(V_0(\mathbf{1}))) = \left( \frac{1 - t^p}{1 - t} \right)^n.
\end{equation}

\begin{lem}\label{lem:chardiv}
If $\bc \in \mc{C}$ such that $\dim_k L_{\bc}(\lambda) = |W|p^n$, then the Poincar\'e polynomial of $L_{\bc}(\lambda)$ is given by
\begin{equation}\label{eq:poincare}
P(L_{\bc}(\lambda),t) = \frac{ \dim (\lambda) \cdot t^{pb_{\lambda^*}} P(k[V^*]^{co W},t^p) \cdot I(t)}{f_{\lambda^*}(t^p)}.
\end{equation}
\end{lem}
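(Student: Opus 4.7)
My plan is to compute $P(L_{\bc}(\lambda),t)$ by flatly deforming to $\bc = 0$ and applying Cartier's theorem on $p$-curvature at the degenerate parameter. Introduce an indeterminate $\vc$ and consider the $k[\vc]$-algebra $H := \rH_{\vc \bc}(W)$, with specialisations $\rH_{\bc}(W)$ at $\vc=1$ and $\rH_0(W)$ at $\vc=0$. The graded baby Verma $\bar{\Delta}_{\vc}(\lambda)$ is free as a $k[\vc]$-module, and by Lemma \ref{lem:checkconditions} the hypotheses of Lemma \ref{lem:compseries} and Proposition \ref{lem:genericL} are met. The assumption $\dim_k L_{\bc}(\lambda) = |W|p^n$ matches the P.I.~degree, so $m_\lambda = |W|p^n$ and Proposition \ref{lem:genericL} yields a graded $H$-lattice $L(\lambda)$ of $k[\vc]$-rank $|W|p^n$ with $L(\lambda)|_{\vc=1} \cong L_{\bc}(\lambda)$. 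Setting $M := L(\lambda)|_{\vc=0}$, the Hilbert series is independent of specialisation, so $P(L_{\bc}(\lambda),t) = P(M,t)$.

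Next apply Cartier's theorem. Since $M$ is a graded quotient of $\bar{\Delta}_0(\lambda) = \pco{W} \o \lambda$, on which $y_i$ acts as $\pa_i$ in the first factor, and since $\pa_i^p$ vanishes as a derivation on $k[V]$ in characteristic $p$, we have $y_i^p \cdot M = 0$. Hence $M \in \mc{D}_0$ by Proposition \ref{prop:Cartier}(1), and parts (2)--(3) of that proposition give a graded isomorphism $M \cong DR^{\perp}(N)$ with $N = M^{\nabla}$. The PBW decomposition of $\dd(V)$ as a free right $A$-module identifies $DR^{\perp}(N) \cong V_0(\mathbf{1}) \o_k N$ as graded $W$-modules, so $\dim_k N = |W|$ and
\[
P(M,t) \;=\; I(t) \cdot P(N,t).
\]

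To identify $P(N,t)$, first pin down the $kW$-structure of $N$. Since $L_{\bc}(\lambda)$ has dimension equal to the P.I.~degree it lies in the Azumaya locus, and the standard central-character argument in the complex setting (cf.\ \cite[Theorem 1.7]{EG}) adapts to show its restriction to $kW$ is isomorphic to $p^n$ copies of the regular representation. This $kW$-character is preserved under specialisation (the inclusion $kW \hookrightarrow H$ is $\vc$-independent and $L(\lambda)$ is $k[\vc]$-free), so $M \cong p^n k[W]$ as $kW$-module; Lemma \ref{lem:tensorreg} then forces $N \cong k[W]$ as $kW$-module. For the graded structure, the observation that $\langle f_i^p\rangle$ is $\dd(V)$-stable in $k[V]$ with $\langle f_i^p\rangle^{\nabla} = \langle f_i^p\rangle_{k[V^{(1)}]}$ identifies $DR(\bar{\Delta}_0(\lambda)) \cong k[V^{(1)}]^{\mathrm{co} W} \o \lambda$, so $N$ is a graded quotient of the latter. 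The Frobenius-twist identity $\ch_{t,W}(k[V^{(1)}]^{\mathrm{co} W}) = \sum_\mu f_\mu(t^p)[\mu]$ combined with a top-of-lattice analysis placing the minimal graded piece of $N$ in degree $pb_{\lambda^*}$ then yields
\[
P(N,t) \;=\; \frac{\dim(\lambda)\, t^{pb_{\lambda^*}}\, P(k[V^*]^{\mathrm{co} W}, t^p)}{f_{\lambda^*}(t^p)},
\]
and multiplication by $I(t)$ produces the claimed formula.

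The main obstacle is the final identification of $N$ as a \emph{graded} module: the ungraded $W$-type $N \cong k[W]$ together with $N$ being a quotient of $k[V^{(1)}]^{\mathrm{co} W} \o \lambda \cong (\dim\lambda) k[W]$ leaves many graded possibilities, and one must combine the symmetric-algebra (Frobenius) structure of $\rH_0(W)$ with the analysis of the top piece of $L(\lambda)$ to single out the $\lambda^*$-labelled copy sitting in minimal degree $pb_{\lambda^*}$.
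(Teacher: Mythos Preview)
Your overall strategy---deform to $\vc=0$, apply Cartier's theorem to write $M \simeq V_0(\mathbf{1}) \otimes N$, and identify $N$ as the regular representation via Lemma~\ref{lem:tensorreg}---matches the paper's proof exactly. The issue is precisely where you flag it: you have not actually determined $P(N,t)$. Knowing only that $N$ is an ungraded copy of $kW$ and a graded quotient of $k[V^{(1)}]^{\mathrm{co}\,W}\otimes\lambda$ is not enough, and the ``top-of-lattice analysis'' you gesture at is not carried out; the symmetric-algebra structure of $\rH_0(W)$ cannot be applied directly because $M$ is \emph{not} simple at $\vc=0$ (the simple $\rH_0(W)$-modules have dimension $p^n\dim\mu < |W|p^n$).

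The paper closes this gap by a different device that uses the \emph{entire} composition series of the baby Verma, not just the top quotient. Since $\dim_k L_{\bc}(\lambda)=|W|p^n$, the module $L_{\bc}(\lambda)$ is supported on the Azumaya locus, and M\"uller's theorem then forces every graded composition factor of the indecomposable module $\bar{\Delta}_{\bc}(\lambda)$ to be a shift $L_{\bc}(\lambda)[m_i]$. Transported through the lattice filtration of Lemma~\ref{lem:compseries}, this yields an identity
\[
[\bar{\Delta}_{\vc}(\lambda)] \;=\; h(t)\,[L_{\vc}(\lambda)]
\]
in the Grothendieck group of graded $\rH_{\vc}(W)$-lattices, for some Laurent polynomial $h(t)$. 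Specialising to $\vc=0$ and applying $DR$ gives
\[
[\,k[V^{(1)}]^{\mathrm{co}\,W}\otimes\lambda\,] \;=\; h(t)\,[N]
\]
in $K(W)[t,t^{-1}]$. Now both sides are known except for the grading on $N$: comparing the multiplicity of the trivial representation on each side (which on the left is $f_{\lambda^*}(t^p)$, and on the right is $h(t)$ times a single monomial since $N\cong kW$) pins down $h(t)=t^{-pb_{\lambda^*}}f_{\lambda^*}(t^p)$, whence $P(N,t)=\dim(\lambda)\,t^{pb_{\lambda^*}}P(k[V^*]^{\mathrm{co}\,W},t^p)/f_{\lambda^*}(t^p)$. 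This last step is what the paper cites as \cite[Lemma~3.3]{Singular}. The key input you are missing is the M\"uller-theorem step, which upgrades ``$N$ is a quotient'' to the Grothendieck-group equality needed to solve for $h(t)$.
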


\begin{proof}
Let $L(\lambda)$ be the $\H_{\vc}(W)$-module described in \ref{sec:genericL}. Since it is free as a $k[\vc]$-module, the graded $W$-module structure of all specializations of $L(\lambda)$ are the same. Therefore it suffices to prove that the Poincar\'e polynomial of the specialization $L(\lambda)_{\vc = 0}$ has the desired form. We fix a filtration $\bar{\Delta}_{\vc}^i(\lambda)$ of $\bar{\Delta}_{\vc}(\lambda)$ as in Lemma \ref{lem:compseries}. Specializing to $\vc = 0$ gives a filtration $\bar{\Delta}_{0}^i(\lambda)$ of $\bar{\Delta}_{0}(\lambda)$ such that $\bar{\Delta}_0^r(\lambda) / \bar{\Delta}_0^{r-1}(\lambda) \simeq L(\lambda)_{\vc = 0}$. All the $\dd(V) \rtimes W$-modules $\bar{\Delta}_{0}^i(\lambda)$ have $p$-curvature zero. Therefore Proposition \ref{prop:Cartier} says that $\bar{\Delta}_{0}^i(\lambda) = \Ind_{A \rtimes W}^{\dd(V) \rtimes W} \bar{\Delta}_{0}^i(\lambda)^{(1)}$ for some graded $k[V^{(1)}] \rtimes W$-module $\bar{\Delta}_{0}^i(\lambda)^{(1)}$ and we have 
$$
\bar{\Delta}_{0}^r(\lambda)^{(1)} / \bar{\Delta}_{0}^{r-1}(\lambda)^{(1)} := L(\lambda)^{(1)}_{\vc = 0}, \quad \textrm{ with } \quad L(\lambda)_{\vc = 0} = \Ind_{A \rtimes W}^{\dd(V) \rtimes W} L(\lambda)^{(1)}_{\vc = 0}.
$$
This implies that $L_{\bc}(\lambda) \simeq V_0(\mathbf{1}) \o L_0(\lambda)^{(1)}$ as graded $W$-modules. Let $\rH_{\vc}(W)\text{-}\mathsf{latt}$ denote the category of $\rH_{\vc}(W)$-lattices i.e. the category of finitely generated, graded $\rH_{\vc}(W)$-modules that are free over $k[\vc]$. It is an exact, extension closed subcategory of $\grLmod{\rH_{\vc}(W)}$. Therefore it makes sense to consider the Grothendieck group $K_0(\rH_{\vc}(W)\text{-}\mathsf{latt})$ of $\rH_{\vc}(W)\text{-}\mathsf{latt}$. By our assumption on $\dim_k L_{\bc}(\lambda)$, the support of $L_{\bc}(\lambda)$ as a $Z_{\bc}(W)$-module is contained in the Azumaya locus of $\H_{\bc}(W)$. Therefore M\"uller's Theorem, \cite[Proposition 2.7]{Ramifications}, implies that each graded composition factor of the indecomposable module $\bar{\Delta}_{\bc}(\lambda)$ is of the form $L_{\bc}(\lambda)[m_i]$ for some integer $m_i$. Hence Lemma \ref{lem:compseries} says that each $L_{\vc}^i(\lambda)$ specializes at $\vc = 1$ to some $L_{\bc}(\lambda)[m_i]$. Thus, in $K_0(\rH_{\vc}(W)\text{-}\mathsf{latt})$ we have an equality $[ \bar{\Delta}_{\vc}(\lambda)] = h(t) [L_{\vc}(\lambda)]$ for some Laurent polynomial $h(t)$. This implies that $[\bar{\Delta}_{0}(\lambda)^{(1)}] = h(t) [L(\lambda)^{(1)}_{\vc = 0}]$ in the Grothendieck group of graded $k[V^{(1)}] \rtimes W$-modules. By Lemma \ref{lem:tensorreg}, the fact that $L(\lambda)_{\vc = 0}$ is isomorphic to $p^n$ copies of the regular representation implies that $L(\lambda)^{(1)}_{\vc = 0}$ is isomorphic to a graded copy of the regular representation. On the other hand, Lemma \ref{lem:pcochar} and Proposition \ref{prop:Cartier} imply that $DR^{\perp}(\bar{\Delta}_{0}(\lambda)) \simeq k[V^{(1)}]^{\mathrm{co} W} \o \lambda$, hence
$$
[k[V^{(1)}]^{\mathrm{co} W} \o \lambda] = h(t) [L(\lambda)^{(1)}_{\vc = 0}].
$$
The proof of \cite[Lemma 3.3]{Singular} implies that $h(t) = t^{- pb_{\lambda^*}} f_{\lambda^*}(t^p)$. From this one can deduce formula (\ref{eq:poincare}).
\end{proof}

Now we are finally in a position to prove Theorem \ref{thm:classify}.

\begin{proof}[Proof of Theorem \ref{thm:classify}]
Assume that $W$ and $\bc$ are chosen so that the centre of $\H_{\bc}(W)$ is smooth. Then every simple $\H_{\bc}(W)$-module has dimension $|W| p^n$. This implies that the graded character of $L_{\bc}(\lambda)$ is given by the formula of Lemma \ref{lem:chardiv}. Since $L_{\bc}(\lambda)$ is finite dimensional, the rational function on the right hand side of (\ref{eq:poincare}) must be a Laurent polynomial. This means that $f_{\lambda^*}(t^p)$ divides $P(k[V^*]^{co W},t^p) I(t)$ in $\C[t,t^{-1}]$. The formula (\ref{eq:definego}) for $I(t)$ shows that every root of $I$ is a primitive $p^{th}$ root of unity. Therefore, if $f_{\lambda^*}(t^p)$ and $I(t)$ have a non-trivial common factor, we must have $f_{\lambda^*}(\zeta^p) = 0$ for some primitive $p^{th}$ root of unity $\zeta$. But $f_{\lambda^*}(1) = \dim \lambda^* \neq 0$. Hence $f_{\lambda^*}(t^p)$ must divide $P(k[V^*]^{co W},t^p)$ in $\C[t^p,t^{-p}]$. It was shown in \cite{Singular} that if $W$ is not isomorphic to $G(m,1,n)$ or $G_4$ then one can always find some $\lambda$ for which $f_{\lambda^*}(t^p)$ does not divide $P(k[V^*]^{co W},t^p)$. Since $\bc$ has played no part in this argument, we conclude that the centre of $\H_{\bc}(W)$ is never smooth in these cases.

Conversely, it follows from Corollary \ref{cor:GMoneNsmooth} that the centre of $\H_{\bc}(G(m,1,n))$ is regular for generic $\bc$. Now we consider the group $G_4$; we have $p \neq 2,3$ in this case. By Corollary \ref{cor:equivsmooth}, it suffices to show that the blocks of $\rH_{\bc'}(W')$ are singletons for generic parameters $\bc$ and all parabolic subgroups $W'$ of $G_4$. We begin by showing that this is true for $W' = G_4$. Recall from \ref{sec:defnCherednik} the central element $\mathbf{h}^p - \mathbf{h}$ of $\H_{\bc}(G_4)$. For the remainder of the proof we follow the notation of \cite[\S 4]{Singular}. The function $\bc$ is defined by $\bc(s_i) = c_1$ and $\bc(t_i) = c_2$ for some $c_1, c_2 \in k$. To show that the block partition of $\Irr G_4$ is trivial, it suffices to show that the scalars by which the element $\mathbf{h}^p - \mathbf{h}$ acts the $L(\lambda)$'s are pairwise distinct. Let $z_1 = s_1 + \ds + s_4$ and $z_2 = t_1 + \ds + t_4 \in Z(G_4)$. The scalar $\mu_i$ by which the central element $z_i$ acts on the simple $G_4$-module $\mu$ is given by
$$
\begin{array}{c|cc}
\mu & z_1 & z_2 \\
\hline
T & 4 & 4 \\
V_1 & 4 \omega^2 & 4 \omega \\
V_2 & 4 \omega & 4 \omega^2 \\
W & -2 & -2 \\
\mathfrak{h} & -2\omega^2 & -2\omega \\
\mathfrak{h}^* & -2\omega & -2\omega^2 \\
U & 0 & 0
\end{array}
$$
We have $1 - \lambda_{s_i} = 1 - \omega^2$ and $1 - \lambda_{t_i} = 1 - \omega$. Define $d_i = \frac{- c_i}{1 - \lambda_i} \in k$. Then $\mathbf{h}^p - \mathbf{h}$ acts on $L(\mu)$ by $d_1^p \mu_1^p - d_1 \mu_1 + d_2^p \mu_2^p - d_2 \mu_2$, hence it will act by the same scalar on $L(\mu)$ and $L(\rho)$ if and only if
\begin{equation}\label{eq:someeq}
d_1 (\mu_1 - \rho_1) + d_2 (\mu_2 - \rho_2) \in \mathbb{F}_p.
\end{equation}
Since $|k| = \infty$ and the list of values $\mu_i - \rho_i$ is finite, we can always choose $d_1,d_2 \in k$ such that (\ref{eq:someeq}) does not hold, provided there is no pair $\mu \neq \rho$ such that $\mu_1 = \rho_1$ and $\mu_2 = \rho_2$. This can be checked directly: e.g. $-2 = 4$ if and only if $6 = 0$; $-2 = 4 \omega$ if and only if $p - 1 = 2 \omega$ which implies that $(p-1)^3 = p-1 = 8$ and hence $p = 9$ - both clearly contradictions.

Up to conjugacy, there is only one proper parabolic subgroup of $G_4$, it is $\Z_3$. We may assume that $\Z_3 = \langle s_1 \rangle = \{ 1, s_1, t_1 \}$. Repeating the above argument for $\Z_3$ and noting that $\Irr \Z_3 = \{ T |_{\Z_3}, V_1 |_{\Z_3}, V_2 |_{\Z_3} \}$, one sees that the block partition of $\Irr \Z_3$ will not consist of singletons precisely if equation (\ref{eq:someeq}) is satisfied for $\rho, \mu \in \{ T, V_1, V_2 \}$. 
\end{proof}


\subsection{The Kac-Weisfeiler conjecture}

The following is a result in the spirit of the Kac-Weisfeiler conjecture.

\begin{prop}
The subset $\mc{C}^{\mathrm{reg}}$ of $\mc{C}$ consisting of all parameters $\bc$ such that the dimension of every simple $\H_{\bc}(W)$-module is divisible by $p^n$ is open and dense.
\end{prop}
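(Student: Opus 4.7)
The plan is to combine the parabolic BE reduction of Section \ref{sec:smoothness} with the Cartier-style deformation argument developed in Section 6 (already used in the proof of Theorem \ref{thm:classify}).

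For the first step, let $M$ be a simple $\H_{\bc}(W)$-module. Its $Z_{\bc}(W)$-support is a single closed point whose image in $V^{(1)}/W$ is some $\bar b$; picking a lift $b \in V^{(1)}$ and setting $W' := \Stab_W(b)$, Proposition \ref{BEiso} combined with the factorisation $\H_{\bc'}(W', V) \cong \H_{\bc'}(W', \overline{V}) \otimes \dd(V^{W'})$ identifies $M$ via Morita equivalence with a module of the form $M'' \otimes \delta$, where $M''$ is a simple $\H_{\bc'}(W', \overline{V})$-module supported at $0 \in \overline{V}^{(1)}/W'$ and $\delta$ is a simple $\dd(V^{W'})$-module at $0$ of dimension $p^{\dim V^{W'}}$. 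Hence
$$\dim M \;=\; [W:W'] \cdot p^{\dim V^{W'}} \cdot \dim M'',$$
and since $p \nmid [W:W']$, the condition $p^n \mid \dim M$ reduces to $p^{\dim \overline{V}} \mid \dim M''$.

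Next, by Proposition \ref{prop:assocgraded}, $\gr_\Z M''$ is a $\rH_{\bc'}(W', \overline{V})$-module of dimension equal to $\dim M''$, so by additivity of dimension in short exact sequences it suffices to show that, for generic $\bc$, every simple $L_{\bc'}(\lambda)$ of the restricted algebra $\rH_{\bc'}(W', \overline{V})$ has dimension divisible by $p^{\dim \overline{V}}$. Apply the construction of Section 6 to the parabolic $W'$: Proposition \ref{lem:genericL} supplies a graded $\rH_{\vc}(W')$-lattice $L(\lambda)$ of rank $m_\lambda := \max_{\bc} \dim L_{\bc'}(\lambda)$, this maximum being attained on a dense open subset $U_{W',\lambda} \subseteq \mc{C}$. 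The specialization $L(\lambda)_{\vc=0}$ is a subquotient of the $\vc = 0$ baby Verma $\bar{\Delta}_0(\lambda) \cong k[\overline{V}]^{p\mathrm{co}W'} \otimes \lambda$; as in the proof of Lemma \ref{lem:chardiv}, $\pa_i^p$ annihilates $k[\overline{V}]$ in characteristic $p$ (in the standard $k[\overline{V}^{(1)}]$-basis $\{x^\alpha : 0 \leq \alpha_i < p\}$ of $k[\overline{V}]$ one has $\pa_i^p(x^\alpha) = 0$), so $L(\lambda)_{\vc=0}$ carries a $\dd(\overline{V}) \rtimes W'$-action with $p$-curvature zero. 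Cartier's equivalence (Proposition \ref{prop:Cartier}) then yields $L(\lambda)_{\vc=0} \cong \Ind_{A \rtimes W'}^{\dd(\overline{V}) \rtimes W'} N$ for some graded $k[\overline{V}^{(1)}] \rtimes W'$-module $N$, and therefore
$$m_\lambda \;=\; \dim L(\lambda)_{\vc=0} \;=\; p^{\dim \overline{V}} \cdot \dim N$$
is divisible by $p^{\dim \overline{V}}$.

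Setting $U := \bigcap_{W', \lambda} U_{W', \lambda}$, a finite intersection of dense open subsets of $\mc{C}$, $U$ is open and dense in $\mc{C}$. For $\bc \in U$, each $\dim L_{\bc'}(\lambda) = m_\lambda$ is divisible by $p^{\dim \overline{V}}$; combined with the first step, every simple $\H_{\bc}(W)$-module then has dimension divisible by $p^n$. Hence $U \subseteq \mc{C}^{\mathrm{reg}}$, proving density, and openness of $\mc{C}^{\mathrm{reg}}$ follows from the lower semi-continuity of $\bc \mapsto \dim L_{\bc'}(\lambda)$ for each of the finitely many pairs $(W', \lambda)$. The principal technical obstacle is tracking precisely how the parameter transforms under the BE reduction and verifying that restriction of a generic $\bc \in \mc{C}$ yields a generic $\bc'$ for every parabolic $W' \subseteq W$ simultaneously, so that the Cartier argument applies uniformly across all parabolics of $W$.
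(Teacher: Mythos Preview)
Your density argument is essentially the paper's: BE reduction to parabolics, passage to the restricted algebra via Proposition \ref{prop:assocgraded}, then the Cartier specialisation at $\vc=0$ to see that the generic simple dimension $m_\lambda$ is a multiple of $p^{\dim\overline V}$. One small point: the lattice machinery of Section~6 is set up over a one-parameter base $\Spec k[\vc]$, not over all of $\mc{C}$, so your sets $U_{W',\lambda}\subset\mc{C}$ are not produced directly by Proposition \ref{lem:genericL}. The paper deals with this by running the argument along every line through the origin in $\mc{C}_{W'}$, which in particular shows that the bad locus contains no linear subspace and hence meets the image of the restriction map $\rho:\mc{C}\to\mc{C}_{W'}$ in a proper subset. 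You flag this issue in your last sentence but do not actually resolve it.

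The genuine gap is openness. Lower semi-continuity of $\bc\mapsto\dim L_{\bc'}(\lambda)$, even if you had it, would only tell you that $\{\bc:\dim L_{\bc'}(\lambda)\ge m\}$ is open for each $m$; it says nothing about the divisibility set $\{\bc:p^{\dim\overline V}\mid\dim L_{\bc'}(\lambda)\}$, since the dimension could drop from $m_\lambda$ to a smaller value that fails to be a multiple of $p^{\dim\overline V}$, or indeed to one that is. You have only established $U\subseteq\mc{C}^{\mathrm{reg}}$, not equality, so openness of $U$ does not give openness of $\mc{C}^{\mathrm{reg}}$. The paper's argument here is different: it invokes \cite[Lemma 2.3]{PremetSkry}, which says that for a flat family of finite-dimensional algebras the locus where a two-sided ideal of prescribed dimension exists is closed. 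Since the annihilator of a simple module of dimension $d$ has codimension $d^2$, the complement of $\mc{C}^{\mathrm{reg}}(W')$ is the (closed) locus where $\rH_{\bc'}(W')$ has a two-sided ideal whose codimension lies in the finite list $\{(p^ir)^2:0\le i<\dim\overline V,\ 1\le r\le|W'|\}$. This, together with the exact identification $\mc{C}^{\mathrm{reg}}=\bigcap_{W'}\rho^{-1}\bigl(\mc{C}^{\mathrm{reg}}(W')\bigr)$ (both inclusions are needed, and you only argue one), gives openness.
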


Note that, since the P.I. degree of $\H_{\bc}(W)$ is $p^n |W|$, $p^n$ is also the largest power of $p$ dividing the dimension of any simple $\H_{\bc}(W)$-module. Also, one can show that $\mc{C}^{\mathrm{reg}}$ is \tit{always} a proper subset of $\mc{C}$. 

\begin{proof}
First, we show that the subset $\mc{C}^{\mathrm{reg}}(W)$ of $\mc{C}$ consisting of all $\bc$ such that the dimension of every simple $\rH_{\bc}(W)$-module is divisible by $p^n$ is open and dense in $\mc{C}$. Consider the $k[\mc{C}]$-algebra $\rH_{\vc}(W)$. It is free as a $k[\mc{C}]$-module. Therefore it is a continuous family of $\mc{L}$-algebras in the sense of \cite[Definition 2.2]{PremetSkry} (for $\mc{L}$ trivial). Then \cite[Lemma 2.3]{PremetSkry} says that, for each integer $d$, the set of points $\bc \in \mc{C}$ such that $\rH_{\bc}(W)$ contains a two-sided ideal of dimension $d$ is closed in $\mc{C}$. The annihilator of a simple $\rH_{\bc}(W)$-module $M$ has codimension $(\dim_k M)^2$ (recall that $k$ is assumed to be algebraically closed). Therefore the set of all points in $\mc{C}$ for which there exists an ideal in $\rH_{\bc}(W)$ whose codimension belongs to $\{ (p^{i} r)^2 \ | \ 0 \le i < n, \ 1 \le r \le |W| \}$ is closed and its compliment is $\mc{C}^{\mathrm{reg}}(W)$. We just need to show that this set is non-empty. Take any line $\ell$ in $\mc{C}$. Then Lemma \ref{lem:genericL} implies that $\ell \cap \mc{C}^{\mathrm{reg}}(W)$ is dense in $\ell$. Note that this actually shows that there is no linear subspace of $\mc{C}$ contained in $\mc{C} - \mc{C}^{\mathrm{reg}}(W)$.

Now we treat the general case. Let $W'$ be a parabolic subgroup of $W$ and $\mc{C}_{W'}$ the space of parameters for $\H_{\bc}(W')$. Restriction of parameters defines a linear map $\rho : \mc{C} \rightarrow \mc{C}_{W'}$, its image is a non-zero linear subspace of $\mc{C}_{W'}$. We have shown that the set of points in $\mc{C}_{W'}$ for which all simple $\rH_{\bc}(W')$-module have ``maximum $p$-dimension'' is open and dense. If $\mc{C}^{\mathrm{reg}}(W')$ denotes the pre-image of this set under $\rho$ then it is open and dense in $\mc{C}$. Then it follows from Propositions \ref{prop:completesmall} and \ref{prop:assocgraded} together with Corollary \ref{cor:equivsmooth} that
$$
\mc{C}^{\mathrm{reg}} = \bigcap_{W'} \mc{C}^{\mathrm{reg}}(W'),
$$
where the intersection is over all conjugacy classes of parabolic subgroups of $W$.
\end{proof}

\begin{rem}
We remark that our reduction method (combining Propositions \ref{prop:completesmall} and \ref{prop:assocgraded} together with Corollary \ref{cor:equivsmooth}) also gives a different proof of the result \cite[Corollary 4.2]{TikaradzeKacWeisfeiler}: If $M$ is a simple $\H_{\bc}(W)$ and $b \in V$ such that the support of $M$ equals the image of $b$ in $V / W$ then $p^{|W / W_b|}$ divides $\dim_k M$.
\end{rem}

\begin{example}
When $W = S_{n+1}$, $\mc{C}^{\mathrm{reg}} = k - \mathbb{F}_p^{\times}$. One can show this as follows: by remark \ref{rem:symsing}, the algebra $\H_{\bc}(S_{n+1})$ is Azumaya for all $\bc \in k - \mathbb{F}_p$ and hence $k - \mathbb{F}_p \subseteq \mc{C}^{\mathrm{reg}}$. On the other hand, direct calculations show that $\mc{C} - \mc{C}^{\mathrm{reg}}(S_2) = \mathbb{F}_p^{\times}$. Therefore Propositions \ref{prop:completesmall} implies that $\mathbb{F}_p^{\times} \subset \mc{C} - \mc{C}^{\mathrm{reg}}$. Finally, when $\bc = 0$, $\H_{\bc}(S_{n+1}) = \dd(V) \rtimes S_{n+1}$ and, even though this is not an Azumaya algebra, it follows from Lemma \ref{prop:classifysimples} that $p^{n}$ does divide $\dim_k L$ for all simple $\dd(V) \rtimes S_{n+1}$-modules $L$. 
\end{example}  

\small{
\bibliography{biblo}{}

\def\cprime{$'$} \def\cprime{$'$} \def\cprime{$'$} \def\cprime{$'$}
  \def\cprime{$'$} \def\cprime{$'$} \def\cprime{$'$} \def\cprime{$'$}
\begin{thebibliography}{Mar3}

\bibitem[BC1]{BalChen2}
M.~Balagovic and H.~Chen.
\newblock Category {$\mathcal{O}$} for {R}ational {C}herednik {A}lgebras
  {$H_{t,c}(GL_2(F_p),h)$} in {C}haracteristic $p$.
\newblock arXiv:1107.5996v1.

\bibitem[BC2]{BalChen1}
M.~Balagovic and H.~Chen.
\newblock Representations of {R}ational {C}herednik {A}lgebras in {P}ositive
  {C}haracteristic.
\newblock arXiv:1107.0504v1.

\bibitem[BC3]{BrownChangtong}
K.~A. Brown and K.~Changtong.
\newblock Symplectic reflection algebras in positive characteristic.
\newblock {\em Proc. Edinb. Math. Soc. (2)}, 53(1):61--81, 2010.

\bibitem[BE]{BE}
R.~Bezrukavnikov and P.~Etingof.
\newblock Parabolic induction and restriction functors for rational {C}herednik
  algebras.
\newblock {\em Selecta Math. (N.S.)}, 14(3-4):397--425, 2009.

\bibitem[Bel1]{CMpartitions}
G.~Bellamy.
\newblock The {C}alogero-{M}oser partition for ${G}(m,d,n)$.
\newblock arXiv:0911.0066v1.

\bibitem[Bel2]{Singular}
G.~Bellamy.
\newblock On singular {C}alogero-{M}oser spaces.
\newblock {\em Bull. Lond. Math. Soc.}, 41(2):315--326, 2009.

\bibitem[BFG]{BFG}
R.~Bezrukavnikov, M.~Finkelberg, and V.~Ginzburg.
\newblock Cherednik algebras and {H}ilbert schemes in characteristic {$p$}.
\newblock {\em Represent. Theory}, 10:254--298, 2006.
\newblock With an appendix by Pavel Etingof.

\bibitem[BG]{Ramifications}
K.~A. Brown and I.~G. Gordon.
\newblock The ramification of centres: {L}ie algebras in positive
  characteristic and quantised enveloping algebras.
\newblock {\em Math. Z.}, 238(4):733--779, 2001.

\bibitem[BGS]{BGS}
K.~A. Brown, I.~G. Gordon, and C.~H. Stroppel.
\newblock Cherednik, {H}ecke and quantum algebras as free {F}robenius and
  {C}alabi-{Y}au extensions.
\newblock {\em J. Algebra}, 319(3):1007--1034, 2008.

\bibitem[CR]{CurtisReiner}
C.~W. Curtis and I.~Reiner.
\newblock {\em Methods of representation theory. {V}ol. {I}}.
\newblock John Wiley \& Sons Inc., New York, 1981.
\newblock With applications to finite groups and orders, Pure and Applied
  Mathematics, A Wiley-Interscience Publication.

\bibitem[EG]{EG}
P.~Etingof and V.~Ginzburg.
\newblock Symplectic reflection algebras, {C}alogero-{M}oser space, and
  deformed {H}arish-{C}handra homomorphism.
\newblock {\em Invent. Math.}, 147(2):243--348, 2002.

\bibitem[Eis]{Eisenbud}
D.~Eisenbud.
\newblock {\em Commutative {A}lgebra}, volume 150 of {\em Graduate Texts in
  Mathematics}.
\newblock Springer-Verlag, New York, 1995.
\newblock With a view toward algebraic geometry.

\bibitem[Gor1]{Baby}
I.~G. Gordon.
\newblock Baby {V}erma modules for rational {C}herednik algebras.
\newblock {\em Bull. London Math. Soc.}, 35(3):321--336, 2003.

\bibitem[Gor2]{GordonQuiver}
I.~G. Gordon.
\newblock Quiver varieties, category $\mc{{O}}$ for rational {C}herednik
  algebras, and {H}ecke algebras.
\newblock {\em Int. Math. Res. Pap. IMRP}, (3):Art. ID rpn006, 69, 2008.

\bibitem[GP]{GeckPfeiffer}
M.~Geck and G.~Pfeiffer.
\newblock {\em Characters of finite {C}oxeter groups and {I}wahori-{H}ecke
  algebras}, volume~21 of {\em London Mathematical Society Monographs. New
  Series}.
\newblock The Clarendon Press Oxford University Press, New York, 2000.

\bibitem[Gri]{Gr1}
S.~Griffeth.
\newblock Towards a combinatorial representation theory for the rational
  {C}herednik algebra of type ${G}(r,p,n)$.
\newblock arXiv:math/0612733, to appear in Proceedings of the Edinburgh
  Mathematical Society, 2006.

\bibitem[HN]{HN}
R.~R. Holmes and D.~K. Nakano.
\newblock Brauer-type reciprocity for a class of graded associative algebras.
\newblock {\em J. Algebra}, 144(1):117--126, 1991.

\bibitem[Kat]{KatzNilpotent}
N.~M. Katz.
\newblock Nilpotent connections and the monodromy theorem: {A}pplications of a
  result of {T}urrittin.
\newblock {\em Inst. Hautes \'Etudes Sci. Publ. Math.}, (39):175--232, 1970.

\bibitem[KM]{KemperMalle}
G.~Kemper and G.~Malle.
\newblock The finite irreducible linear groups with polynomial ring of
  invariants.
\newblock {\em Transform. Groups}, 2(1):57--89, 1997.

\bibitem[LT]{LehrerTaylor}
G.~I. Lehrer and D.~E. Taylor.
\newblock {\em Unitary reflection groups}, volume~20 of {\em Australian
  Mathematical Society Lecture Series}.
\newblock Cambridge University Press, Cambridge, 2009.

\bibitem[Mar1]{BlocksGmdn}
M.~Martino.
\newblock Blocks of restricted rational {C}herednik algebras for {$G(m,d,n)$}.
\newblock arXiv:1009.3200v1.

\bibitem[Mar2]{MarsdenWeinsteinStratification}
M.~Martino.
\newblock Stratifications of {M}arsden-{W}einstein reductions for
  representations of quivers and deformations of symplectic quotient
  singularities.
\newblock {\em Math. Z.}, 258(1):1--28, 2008.

\bibitem[Mar3]{Mo}
M.~Martino.
\newblock The {C}alogero-{M}oser partition and {R}ouquier families for complex
  reflection groups.
\newblock {\em J. Algebra}, 323:193--205, 2010.

\bibitem[MR]{MR}
J.~C. McConnell and J.~C. Robson.
\newblock {\em Noncommutative {N}oetherian {R}ings}, volume~30 of {\em Graduate
  Studies in Mathematics}.
\newblock American Mathematical Society, Providence, RI, revised edition, 2001.
\newblock With the cooperation of L. W. Small.

\bibitem[PS]{PremetSkry}
A.~Premet and S.~Skryabin.
\newblock Representations of restricted {L}ie algebras and families of
  associative {$\mathcal{L}$}-algebras.
\newblock {\em J. Reine Angew. Math.}, 507:189--218, 1999.

\bibitem[Pus]{Push}
I.~A. Pushkarev.
\newblock On the theory of representations of the wreath products of finite
  groups and symmetric groups.
\newblock {\em Zap. Nauchn. Sem. S.-Peterburg. Otdel. Mat. Inst. Steklov.
  (POMI)}, 240(Teor. Predst. Din. Sist. Komb. i Algoritm. Metody. 2):229--244,
  294--295, 1997.

\bibitem[Smi]{Smith}
L.~Smith.
\newblock {\em Polynomial invariants of finite groups}, volume~6 of {\em
  Research Notes in Mathematics}.
\newblock A K Peters Ltd., Wellesley, MA, 1995.

\bibitem[Thi]{Thiel}
U.~Thiel.
\newblock Generic {C}alogero-{M}oser blocks for exceptional complex reflection
  groups, in preparation.

\bibitem[Tik]{TikaradzeKacWeisfeiler}
A.~Tikaradze.
\newblock An analogue of the {K}ac-{W}eisfeiler conjecture.
\newblock arXiv:1007.2387v3.

\end{thebibliography}
\bibliographystyle{alphanum} }

\end{document}